\documentclass[11pt]{amsart}

\usepackage{amsthm,amsfonts}
\usepackage{latexsym}
\usepackage{amssymb}
\usepackage{amsmath}
\usepackage{color}
\usepackage{bbm}
\usepackage{hyperref}
\usepackage{lmodern}
\usepackage{comment}

\allowdisplaybreaks 

\usepackage{graphicx}
\usepackage{tcolorbox}
\usepackage{tabularx}
\usepackage{tikz-cd}

\newtheorem{theorem}{Theorem}[section]
\newtheorem{lemma}[theorem]{Lemma}
\newtheorem{proposition}[theorem]{Proposition}
\newtheorem{corollary}[theorem]{Corollary}
\newtheorem{definition}[theorem]{Definition}

\theoremstyle{definition}

\newtheorem{example}[theorem]{Example}
\newtheorem{remark}[theorem]{Remark}

\newcommand{\cl}[1]{\mathcal{#1}}

\def\B{{\mathcal B}}

\def\B{{\mathcal B}}

\def\F{{\mathcal F}}

\def\L{{\mathcal L}}

\def\M{{\mathcal M}}

\begin{document}

\title{Fell bundles and Haagerup properties}

\author[E.B, R.C. L.T]{Erik B\'edos, Roberto Conti, Lyudmila Turowska}

\address{Department of Mathematics, University of Oslo,
P.B. 1053 Blindern, N-0316 Oslo, Norway}
\email{bedos@math.uio.no}
\address{Dipartimento SBAI, Sapienza Universit\`a di Roma,
Via A. Scarpa 16, I-00161 Roma, Italy}
\email{roberto.conti@sbai.uniroma1.it}
\address{Department of Mathematical Sciences, Chalmers University of Technology and the University of Gothenburg, Gothenburg SE-412 96, Sweden}
\email{turowska@chalmers.se}

\begin{abstract} We introduce new properties of Haagerup-type for Fell bundles over discrete groups and discuss their relationship with Haagerup properties for the 
associated $C^*$-algebras.  In particular, our results on Fell bundles associated with twisted unital $C^*$-dynamical systems extend  previous results known in the untwisted case. 
\end{abstract}

\date{\today}

\maketitle

\tableofcontents

\section{Introduction}

Fell bundles are a kind of device designed to provide several useful tools to analyze in efficient way $C^*$-algebras and the maps between them, often in connection with the existence of 
suitable gradings. As the name suggests, these are actually bundles of Banach spaces over groups (or more general objects like groupoids), with additional operations with a clear $C^*$-flavour. In this paper, for simplicity, we will stick to Fell bundles over discrete groups.

Over the recent years, there has been a consistent trend for transferring analytic properties from groups 
to the realm of Fell bundles. Not surprisingly, this has been 
investigated in great detail for amenability, e.g.~ in \cite{exel97, takeishi, exel,  abf,  bew24, bc25, bf25,bf25b},
including very recently polynomial growth \cite{fl25},
and for rapid decay in \cite{bk26}.
Since the cross-sectional $C^*$-algebras of Fell bundles are natural generalizations of $C^*$-crossed products associated to twisted $C^*$-dynamical systems, and many $C^*$-algebras can be described in this way, this extension is a very reasonable and often useful step.
However, a subtle issue is that there are often competing ways to 
transfer these notions, making it challenging to verify their equivalence or clarify their exact relationships.
Of course, as several facets are involved, some technicalities unavoidably show up in this process.

With these considerations in mind, we decided to start a long-term project devoted to studying how various group approximation/representation theoretic properties can be extended to Fell bundles.
In the present paper, we focus on the Haagerup property. In addition to considering Haagerup properties for groups and $C^*$-algebras,
we introduce some new  properties of Haagerup-type for Fell bundles and examine the possible interconnections from multiple angles. 
Many of our results generalize results for 
$C^*$-dynamical systems and their crossed products available in the literature (e.g.~\cite{bc12, mstt, mw21, gm22, kls}). 

There are many good reasons why the Haagerup property for groups \cite{ccjjv, bo} (some-times also referred to as a-T-menability) has been extensively studied.
For example, it plays a pivotal role in the study of the Baum-Connes/Novikov conjectures (cf.~\cite[Subsection 1.3.3]{ccjjv} for a short review), and is an important ingredient in Popa's deformation/rigidity theory  (cf.~\cite{adpo, hou}). Its origin \cite{choda} is deeply rooted in the theory of $II_1$ factors, as being a natural candidate for relaxing amenability/hyperfiniteness.
Formulating Haagerup-type properties for Fell bundles relies to some extent on natural requests that one way or another imitate the corresponding group and dynamical theoretic features,
but at the same time should be validated by the strength of the resulting theorems.

Since the early work of Haagerup \cite{haa1,haa2}, a recurrent idea 
has been to characterize various properties of groups via the existence of certain nets of positive definite  functions. This perspective inspired the formulation of analogous approximation properties for $C^*$-algebraic objects, such as $C^*$-algebras and $C^*$-dynamical systems, that often related back to the group properties one started with.
In this $C^*$-setting, the nets of positive definite functions are replaced by nets of operator-valued functions satisfying suitable forms of positive definiteness.

In this paper, we discuss several properties for Fell bundles having the flavor of the Haagerup property for groups or the Haagerup property for $C^*$-algebras equipped with a state.
The concept of Haagerup property for $(A, \psi)$ when $A$ is a $C^*$-algebra and $\psi$ is a state on $A$ has been considered in several papers (originally, only defined for faithful tracial states), see e.g.~\cite{dong, suzuki13, mw21}. Further, if $\Sigma=(A, G, \alpha)$ is a unital discrete $C^*$-dynamical system, 
a Haagerup property for $\Sigma$ (in the sense of Dong and Ruan) was introduced in \cite{dong_ruan}. Moreover, when $\psi$ is an $\alpha$-invariant state on $A$, a Haagerup property for $(\Sigma, \psi)$ was discussed in \cite{gm22} (following \cite{mstt} when $\psi$ is faithful and tracial). 
To each twisted unital discrete $C^*$-dynamical system $\Sigma=(A, G, \alpha, \sigma)$ one may associate a Fell bundle $\B_\Sigma$ such that $C_r^*(\B_\Sigma)$ is $*$-isomorphic to the reduced $C^*$-crossed product $C_r^*(\Sigma)$. Such Fell bundles may be characterized as so-called crossed product bundles, see Subsection \ref{cpb}
for details. We will often find it convenient to formulate results about such twisted systems in terms of crossed product bundles. 

Let $\B=(B_g)_{g\in G}$ be a Fell bundle over $G$ \cite{exel} and $\psi$ be a state on the unit fiber $B_e$. We define in Section \ref{sec:haag_bundle}
the Haagerup property for the pair $(\B, \psi)$ in terms of the existence of a net of positive definite $\B$-bundle maps (as defined in \cite{bc25}) satisfying suitable properties, cf.~Definition \ref{def:haag_bundle}. If $C_r^*(\B)$ denotes the associated reduced cross-sectional $C^*$-algebra and  
$E_e:C_r^*(\B)\to B_e$ is the canonical conditional expectation, we can then talk about the Haagerup property for $(B_e, \psi)$ and the one for $(C_r^*(\B), \psi\circ E_e)$.
As we show in Theorem \ref{HP-Fell}, the Haagerup property for $(C_r^*(\B), \psi\circ E_e)$ turns out to be equivalent to the Haagerup property for  $(\B,\psi)$, which implies that $(B_e, \psi)$ has the Haagerup property. 
On the other hand, the  PD-approximation property for $\B$ was defined in \cite{bc25}. As shown recently in \cite{bf25}, it is equivalent to Exel's approximation property \cite{exel97, exel} (at least when $\B$ is unital or $B_e$ is nuclear).  As this property is reminiscent of amenability for groups, it inspired us to introduce 
a Haagerup PD-approximation property for $\B$, cf.~Definition \ref{HPD}, generalizing the Haagerup property (in the sense of \cite{dong_ruan}) for unital $C^*$-dynamical systems.  
When $A\subset C$ is a unital inclusion of $C^*$-algebras having a faithful conditional expectation $E:C\to A$, a Hilbert $A$-module Haagerup property for $C$ w.r.t.~$E$ 
is discussed in \cite{dong_ruan}. Thus, when $B_e$ is unital, it makes sense to consider the 
Hilbert $B_e$-module Haagerup property for $C_r^*(\B)$ w.r.t.~$E$. We show in Theorem \ref{moduleHaag} that if this property is satisfied, then $\B$ has the Haagerup PD-approximation property, while the converse result is shown to be true under certain additional assumptions in Theorem \ref{orthoH}.  
Finally, it seemed also natural to us to generalize the notion of nuclearity for $C^*$-dynamical systems introduced in \cite{mstt} to the setting of Fell bundles. We 
show in Theorem \ref{nuclearC*} that it is equivalent to $\B$ having the PD-approximation property and $B_e$ being nuclear, which is known to be equivalent to the nuclearity of
$C_r^*(\B)$.

With so many properties at hand our main concern in this note has been to examine in detail the way they are logically related. Some of the implications we found hold for any Fell bundle, others for unital ones and some only for crossed product bundles. Even so, some of our results extend known facts for crossed product to the setting of twisted dynamical systems. For the benefit of the reader, we have included a diagram in Section \ref{diagram} illustrating the connections between the properties considered in this paper.  

In forthcoming publications we plan to give a closer look to suitable analogs of Herz-Schur multipliers and cover some other properties for Fell bundles,
like weak amenability, weak Haagerup property and Kazhdan's property (T).  

\vspace{-1.9ex}
\section{Preliminaries}
  
\subsection{Fell bundles over discrete groups}
Throughout this paper, $\B=(B_g)_{g\in G}$ will denote a Fell bundle over a discrete group $G$, and 
$e$ will denote the unit of $G$. We recall that a \emph{Fell bundle $\B=(B_g)_{g\in G}$} over a  $G$ is a collection of Banach spaces %(called \emph{fibers}) 
satisfying the following properties (where $\B$ also denotes the disjoint union of the $B_g$'s).
There is an associative multiplication map from $\B\times \B$ into $\B$ 
such that $B_g B_h \subseteq B_{gh}$, $(a, b)\mapsto ab$ is bilinear on $B_g\times B_h$ for all $g, h \in G$, and $\|ab\| \leq \|a\|\|b\|$ for all $a, b\in \B$. Moreover, there is an involutive, anti-multiplicative map $b\mapsto b^*$ from $\B$ into itself
such that $B_g^*= B_{g^{-1}}$. Moreover, $b\mapsto b^*$ is a conjugate-linear, norm-preserving map from $B_g$ into $B_{g^{-1}}$ for all $g \in G$.  Finally, we have $\|b^*b\| =\|b\|^2$ and $b^*b \geq 0$ in the unit fibre $B_e$ (which is a $C^*$-algebra) for all $b\in \B$. Note that each $B_g$ is a right $B_e$-module (with $\langle b, c \rangle_{B_e} = b^*c$ for $b, c\in B_g$) and also a left $B_e$-module.  We say that $\B$ is \emph{unital} when $B_e$ is unital.

Our notation will essentially be as in Exel's book \cite{exel}. Thus, a section of $\B$ is a function $f:G\to \B$ such that $ f(g) \in B_g \text{ for every } g \in G$,
and $C_c(\B)$ denotes the $*$-algebra consisting of all finitely supported sections of $\B$, the product and involution being given by
\[ (f_1 \star f_2) (h) = \sum_{g\in G} f_1(g) f_2(g^{-1}h), \quad f^*(h) = f(h^{-1})^*\]
for all $h \in G$. Also, $\ell^2(\B) $ denotes the (right) Hilbert $B_e$-module obtained as the completion of $C_c(\B)$, considered as
 a (right) inner product  $B_e$-module  with operations given by 
 \[(\xi \cdot b)(g) = \xi(g) b, \quad  \langle \xi, \eta\rangle_{B_e} = \sum_{g\in G}\,\xi(g)^*\eta(g)\quad \text{for every } g\in G,\]
$\xi, \eta \in C_c(\B)$ and $b \in B_e$.
 The (left) \emph{regular representation} $\lambda^\B = (\lambda^\B_g)_{g\in G}$ of $\B$ in the $C^*$-algebra of adjointable operators on $\ell^2(\B)$, which we will denote by $\L_{B_e}(\ell^2(\B))$,  is determined  for each $g\in G$ by
\[ (\lambda^\B_g(b)\xi)(h) = b\,\xi(g^{-1}h) \quad \text{for all } b \in B_g, \xi \in C_c(\B) \text{ and } h \in G.\]
The map $\lambda^\B_g:B_g\to \L_{B_e}(\ell^2(\B))$ is isometric for each $g\in G$. Further, 
$\lambda^\B$ induces a faithful $*$-representation $\iota^\B : C_c(\B)\to \L_{B_e}(\ell^2(\B))$ given  by
\[\iota^\B(f) = \sum_{g\in G} \lambda^\B_g(f(g)) \quad \text{ for all } f\in C_c(\B),\]
see  \cite[Proposition 17.9, (ii)]{exel}.
 
 The \emph{reduced cross-sectional $C^*$-algebra $C_r^*(\B)$} associated to $\B$ is by definition the $C^*$-subalgebra of $\L_{B_e}(\ell^2(\B))$ generated by 
 $\{ \lambda_g^\B(b): g\in G, b \in B_g\}$, i.e., by $\iota^\B(C_c(\B))$. Alternatively, setting $\|f\|_r := \|\iota^\B(f)\|$ for $f\in C_c(\B)$,  $C_r^*(\B)$ may be considered as the completion of $C_c(\B)$ w.r.t.~the norm $\|\cdot\|_r$. 
For each $g\in G$ there is a contractive linear map $E_g:C_r^*(\B) \to B_g$ satisfying
\[E_g\big(\lambda^\B_h (b)\big)=\begin{cases} b, \quad \text{ if } g =h, \\  0, \quad \text{ if }g\neq h, \end{cases}\]
cf.~\cite[Lemma 17.8]{exel}. For $x \in C_r^*(\B)$, $E_g(x)$ may be thought of as the Fourier coefficient of $x$ at $g$. The map $E_e$ is a faithful conditional expectation from $C_r^*(\B)$ onto $B_e$ (after identifying $B_e$ with $\lambda_e^\B (B_e)$, as we will do in the sequel), see \cite[Propositions 17.13 and 19.3]{exel}.

We will make use of the matrix $C^*$-algebras associated to $\B$ as in \cite{af19}. We recall their definition. Let $g_1, \ldots, g_n \in G$ and set $\mathbf{g}:= (g_1, \ldots, g_n) \in G^n$.Then
\[M_\mathbf{g}(\B) := \big\{R=[r_{ij}]\in M_n(\B) : r_{ij} \in B_{g_i^{-1}g_j} \text{ for all } i, j =1, \ldots, n\big\}\]
is a $*$-algebra with respect to the natural operations, which can be equipped with a norm turning it into a $C^*$-algebra, cf.~\cite[Lemma 2.8]{af19}.

We next recall from \cite{bc25} that a \emph{$\B$-bundle map} is a family of maps $T = (T_g)_{g\in G}$  such that $T_g: B_g \to B_{g}$ is linear and bounded for every $g\in G$. Every $\B$-bundle map $T$ induces a linear map $\phi_T: C_c(\B) \to C_c(\B)$ given by 
\[[\phi_T(f)](g)= T_g(f(g))\quad \text{for all } f\in C_c(\B) \, \text{and} \, g\in G.\]
A $\B$-bundle map $T$ is said to be \emph{reduced} when $\phi_T$ is bounded w.r.t.~$\|\cdot\|_r$, in which case we denote by $M_T$ the extension of $\phi_T$ to a bounded linear operator on $C_r^*(\B)$. As a bounded linear operator, $M_T$ is then uniquely determined by
\begin{equation} \label{M_T}
M_T(\lambda^\B_g(b)) = \lambda^\B_g(T_g(b)) \quad \text{ for all } g\in G \text{ and } b\in B_g.
\end{equation}
 Moreover, a $\B$-bundle map $T$ is said to be \emph{positive definite} \cite{bc25} when
\begin{equation}\label{Bposdef}
\sum_{i, j=1}^n b_i \, T_{g_i^{-1}g_j} (a_i^* a_j) \, b_j^* \, \geq \, 0 \, \, (\text{ in } B_e)
\end{equation}
for all $n\in \mathbb{N}$, $g_1, \ldots, g_n \in G$ and  $a_i, b_i \in B_{g_i}$, $i=1, \ldots, n$.
Equivalently, cf.~\cite[Remark 3.4]{bc25}, $T$ is positive definite when the matrix 
\[\Big[ T_{g_i^{-1}g_j} (a_i^* a_j)\Big]\] is positive in $M_{\mathbf{g}}(\B)$  for all  $n\in \mathbb{N}$, $\mathbf{g}= (g_1, \ldots, g_n) \in G^n$ and $a_i \in B_{g_i}$, $i=1, \ldots, n$.  
The following result is a special case of \cite[Theorem 3.14]{bc25}: 
\begin{theorem} \label{PDCP}
Let $T$ be a $\B$-bundle map. Then  $T$ is positive definite
if and only if $T$ is reduced with $M_T: C^*_r(\B) \to C^*_r(\B)$ being completely positive.
\end{theorem}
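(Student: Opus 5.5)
The claim has two directions, and I treat them separately.

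\emph{($\Leftarrow$)} Suppose $T$ is reduced and $M_T$ is completely positive. Fix $\mathbf{g}=(g_1,\dots,g_n)$ and $a_i\in B_{g_i}$, and set $x_i=\lambda^\B_{g_i}(a_i)\in C_r^*(\B)$. The matrix $[x_i^*x_j]=X^*X$ is positive in $M_n(C_r^*(\B))$, where $X$ is the row $(x_1,\dots,x_n)$. Its entries are $x_i^*x_j=\lambda^\B_{g_i^{-1}g_j}(a_i^*a_j)$.

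Complete positivity then gives $[M_T(x_i^*x_j)]=[\lambda^\B_{g_i^{-1}g_j}(T_{g_i^{-1}g_j}(a_i^*a_j))]\ge 0$, using (\ref{M_T}). Take $b_i\in B_{g_i}$ and form $y=\sum_i \lambda^\B_{g_i}(b_i)$, viewed as the row $Y=(\lambda^\B_{g_1}(b_1),\dots,\lambda^\B_{g_n}(b_n))$. Then $Y[\,\cdot\,]Y^*\ge0$, which is the element $\sum_{i,j}\lambda^\B_e\big(b_iT_{g_i^{-1}g_j}(a_i^*a_j)b_j^*\big)$. Since $\lambda^\B_e$ is a faithful $*$-homomorphism from $B_e$, this is exactly (\ref{Bposdef}).

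\emph{($\Rightarrow$)} This is the substantive direction, and I would follow a Stinespring/KSGNS-type construction on $\ell^2(\B)$. First, I would show that $\phi_T$ is completely positive on $C_c(\B)$ in the sense that for $f_1,\dots,f_m\in C_c(\B)$ the matrix $[\iota^\B(\phi_T(f_k^*\star f_l))]$ is positive. To do this, I would write out the $\lambda$-matrix coefficients: for $\xi\in C_c(\B)^m$, the quantity $\langle \xi, [\iota^\B\phi_T(f_k^*f_l)]\xi\rangle$ expands into finite sums of the form $\sum b_i T_{h_i^{-1}h_j}(a_i^*a_j)b_j^*$. Here the $a$'s are values $f_k(s)$, the $b$'s come from $\xi$, and the $h_i$ range over a finite set of group elements with repetitions absorbed by summing within fibers. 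Positivity of these sums follows from (\ref{Bposdef}), or equivalently from the positivity of $[T_{g_i^{-1}g_j}(a_i^*a_j)]$ in $M_{\mathbf g}(\B)$ composed with the row-column compression.

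Second, I would need boundedness. Here I would build a Hilbert $B_e$-module $\mathcal H_T$ from the positive kernel by separating and completing $C_c(\B)\odot \ell^2(\B)$ with the inner product dictated by $T$. On $\mathcal H_T$, left multiplication by $C_c(\B)$ acts by adjointable operators, and the bound $\|\pi(f)\|\le\|f\|_r$ must be checked. This check is the main obstacle: one needs the universal-to-reduced passage, namely that the representation is weakly contained in the regular one.

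The way I would try to get the bound is to realize $\mathcal H_T$ inside $\ell^2(\B)\otimes$(something), as a Fell absorption argument. Then $M_T(x)=V^*\pi(x)V$ with $\|V\|^2=\sup\|T_e\|$-type control, which gives reducedness together with complete positivity of the extension. Alternatively, I could invoke \cite[Theorem 3.14]{bc25} directly, since the statement is a special case of it.
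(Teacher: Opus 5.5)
Your ($\Leftarrow$) argument is correct and complete. Compressing the positive matrix $[\lambda^\B_{g_i^{-1}g_j}(T_{g_i^{-1}g_j}(a_i^*a_j))]$ by the row $(\lambda^\B_{g_1}(b_1),\dots,\lambda^\B_{g_n}(b_n))$, and using that $\lambda^\B_e$ is an injective $*$-homomorphism, gives exactly (\ref{Bposdef}). The paper proves neither direction; it simply quotes \cite[Theorem 3.14]{bc25}, so your closing fallback is the paper's route.

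Your own ($\Rightarrow$) sketch, however, stops at the one step that is not formal. The KSGNS construction on $C_c(\B)\odot\ell^2(\B)$ produces a $*$-representation $\pi$ of the $*$-algebra $C_c(\B)$. Such a representation is automatically bounded for the universal norm $\|\cdot\|_u$ of $C^*(\B)$, since each $b\in B_g$ acts with norm at most $\|b\|$. Nothing in the construction controls it by $\|\cdot\|_r$. What you actually obtain is a completely positive map $\Psi\colon C^*(\B)\to C_r^*(\B)$ extending $\iota^\B\circ\phi_T$, not reducedness of $T$. The ``Fell absorption'' realization meant to close this is not specified: you do not say which representation of $\B$ is tensored with the regular one, or why $T$ is a coefficient of it. As written, this direction is unproved.

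The missing step needs no absorption principle; the faithful expectation $E_e$ does the job. Let $\Lambda\colon C^*(\B)\to C_r^*(\B)$ be the regular representation. On $C_c(\B)$ you have $E_e(\Psi(f))=T_e(f(e))=T_e(E_e(\Lambda(f)))$, hence $E_e\circ\Psi=T_e\circ E_e\circ\Lambda$ on $C^*(\B)$ by continuity. If $\Lambda(x)=0$, the Kadison--Schwarz inequality gives
\[E_e\big(\Psi(x)^*\Psi(x)\big)\le\|\Psi\|\,E_e\big(\Psi(x^*x)\big)=\|\Psi\|\,T_e\big(E_e(\Lambda(x)^*\Lambda(x))\big)=0,\]
so $\Psi(x)=0$ because $E_e$ is faithful on $C_r^*(\B)$. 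Thus $\Psi$ factors through $\Lambda$. This gives $\|\phi_T(f)\|_r\le\|\Psi\|\,\|f\|_r$, and the induced map $M_T$ is completely positive because positive matrices over $C_r^*(\B)$ lift to positive matrices over $C^*(\B)$.

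There is also a smaller issue in your Step 1. When you expand $\langle\xi,[\iota^\B\phi_T(f_k^*\star f_l)]\xi\rangle$, the elements coming from $\xi$ lie in translated fibres $B_{r^{-1}g_i}$ rather than in $B_{g_i}$. So (\ref{Bposdef}) does not apply verbatim. You need the $M_{\mathbf g}(\B)$ formulation (write the positive matrix as $S^*S$), as your ``equivalently'' clause suggests.
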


\subsection{Crossed product bundles} \label{cpb}
We will find it convenient to formulate many of our results dealing with unital discrete twisted  $C^*$-dynamical systems and their associated reduced $C^*$-crossed products
in the setting of so-called crossed product bundles, as introduced in \cite{mawa}.
\begin{definition}
    A unital Fell bundle ${\mathcal B}=(B_g)_{g\in G}$ is said to be a \emph{(unital, twisted) crossed product bundle} if there exists a section $u: G\to \B$ which is unitary, i.e., satisfies $u(g)u(g)^*= u(g)^*u(g) = 1_{B_e}$ for all $g\in G$.
\end{definition}
As shown in \cite[Lemma 5.2]{mawa}, we then have $B_g=\{au(g): a \in B_e\}$ and  $B_{gh}=B_gB_h$ for all $g, h\in G$. For brevity we will sometimes write $u_g$ instead of $u(g)$ for $g\in G$.

Let $\Sigma=(A, G, \alpha, \sigma)$ be a discrete unital twisted $C^*$-dynamical system, see for example \cite{zm, pr, bc12}. That is, $\alpha:G\to {\rm Aut}(A)$ is a map from $G$ to the group of $*$-automorphisms of a unital $C^*$-algebra $A$ and $\sigma:G\times G \to \mathcal{U}(A)$ is a map from $G\times G$ to the unitary group of
 $A$ satisfying
 \begin{eqnarray*}
    &\alpha_g\alpha_h=\text{Ad}(\sigma(g,h))\alpha_{gh},\\
    &\sigma(g,h)\sigma(gh,k)=\alpha_g(\sigma(h,k))\sigma(g,hk),\\
    &\sigma(g,e)=\sigma(e,g)=1_{A}
\end{eqnarray*}
for all $g, h, k\in G$ (where ${\rm Ad}(v)$ denotes the $*$-automorphism of $A$ given by $[{\rm Ad}(v)](a)= vav^*$ for all $a\in A$). One also says that $(\alpha, \sigma)$ is a \emph{twisted action of $G$ on $A$}. It is well-known and easy to check that the associated Fell bundle $\B_\Sigma =(A\times \{g\})_{g\in G}$ with multiplication and involution given by
 \[(a, g)(b, h) := \big(a\alpha_g(b)\sigma(g,h), gh\big), \,\, (a, g)^* := \big(\alpha_g^{-1}(a^*)\sigma(t^{-1}, t)^*, g^{-1}\big)\]
 for all $a,b\in A$ and $g, h\in G$, is a crossed product bundle (with $u(g):=(1_A, g)$ for all $g\in G$), such that $C_r^*(\B_\Sigma)\simeq C_r^*(\Sigma)$ (the reduced $C^*$-crossed product associated to $\Sigma$).

Conversely, any crossed product bundle $\B$ is isomorphic to $\B_\Sigma$ for some discrete unital twisted $C^*$-dynamical system $\Sigma$. To see this, assume ${\mathcal B}=(B_g)_{g\in G}$ is a crossed product bundle and fix a unitary section $u:G\to \B$. We may clearly assume that $u(e)=1_{B_e}$. Define $\alpha_g(b):=u(g)bu(g)^* \in B_e$ for all $b\in B_e$ and $g\in G$, and  $\sigma(g,h):= u(g)u(h)u(gh)^* \in B_e$ for all $g$, $h\in G$. Then $(\alpha, \sigma)$ is a twisted action of $G$ on $B_e$. 
Indeed, for $b\in B_e$, $g$, $h$, $k\in G$, we have 
\begin{eqnarray*}
    (\alpha_g\alpha_h)(b)&=&u(g)u(h)bu(h)^*u(g)^*\\&=&\sigma(g,h)u(gh)bu(gh)^*\sigma(g,h)^*\\&=&
    (\text{Ad}(\sigma(g,h))\alpha_{gh})(b);
    \end{eqnarray*}
    and 
    \begin{eqnarray*}
\sigma(g,h)\sigma(gh,k)&=&u(g)u(h)u(gh)^*u(gh)u(k)u(ghk)^*\\&=&u(g)u(h)u(k)u(ghk)^*\\&=&u(g)\sigma(h,k)u(g)^*u(g)u(hk)u(ghk)^*\\
&=&\alpha_g(\sigma(h,k))\sigma(g,hk).
\end{eqnarray*}
Set $\Sigma:= (B_e, G, \alpha, \sigma)$.  Then it is routine to check that the bundle map $S=(S_g)_{g\in G}$ from $\B$ to $\B_\Sigma$ defined by $S_g(b):= (bu(g)^*, g)$ for $g\in G$ and $b \in B_g$ is an isomorphism, with inverse $S^{-1}=(S^{-1}_g)_{g\in G}$ given by $S_g^{-1}(a, g)= au(g)$ for all $a\in B_e$ and $g\in G$. It now follows from \cite[Section 21.1]{exel} that 
\[\, C_r^*(\B)\simeq C_r^*(\B_\Sigma) \simeq C_r^*(\Sigma).\] 
 For completeness, we mention that one may also associate a Fell bundle to any twisted \emph{partial} action of $G$ on a $C^*$-algebra $A$, as defined in \cite{exel97a}. 

\subsection{Approximation properties for Fell bundles}
We recall the following definition, cf.~\cite[Definition 20.4]{exel}.
\begin{definition}\label{ap}
 A Fell bundle $\mathcal B$ over $G$ is said to have \emph{Exel's approximation property (AP)} if there exists a net $\{\xi_i\}_{i\in I}$ of finitely supported functions from $G$ to $B_e$ such that
 \begin{itemize}
     \item 
     $\sup_{i\in I}\|\sum_{h\in G}\xi_i(h)^*\xi_i(h)\|
     < \infty$,
     
     \smallskip
     \item $\lim_i\|\sum_{h\in G}\xi_i(gh)^*b\xi_i(h)-b\|=0$ for every $g\in G$ and $b\in B_g$.
 \end{itemize}
\end{definition}
Exel's AP is satisfied whenever $G$ is amenable, cf.~\cite[Theorem 20.4]{exel}. In \cite[Theorem 3.2]{bf25}, Buss and Ferraro provide a list of statements all equivalent to Exel's 
AP, and say that $\B$ is  \emph{$C^*$-amenable} when one of these conditions holds. We will adopt this terminology in the sequel.

We also recall the following definition from \cite{bc25}. Here, a $\B$-bundle map $T=(T_g)_{g\in G}$ is said to have finite support when its support $\{g\in G: T_g\neq 0\}$ is finite. 
\begin{definition} \label{PD} A Fell bundle $\B$ over $G$ is said to have the \emph{PD-approximation property} if there exists a net $\{T^i\}_{i\in I}$ of $\B$-bundle maps satisfying the following properties:
 \begin{itemize}
 \item[(i)] For each $i \in I$, $T^i$ is positive definite  and has finite support; 
  \item[(ii)]  $\{T^i\}_{i\in I}$ is uniformly bounded in the sense that $\sup_{i\in I}\|T^i_e\| < \infty$;
  \item[(iii)] $\lim_i \|T^i_g(b) - b\| = 0$ for every $g \in G$ and $b\in B_g$. 
 \end{itemize} 
 \end{definition}
Note that  $\mathcal B$ is said to have the BCAP in \cite{bf25} when $\B$ has the PD-approximation property, but satisfies $\sup_{i\in I}\|T^i_e\| \leq 1$.
If $\B$ is unital, these two notions are easily seen to be equivalent. 
Moreover, Buss and Ferraro show that $C^*$-amenability of $\B$ is equivalent to the BCAP for $\B$, cf.~\cite[Corollary 4.17]{bf25}.    

\section{Haagerup properties for groups, C*-algebras and C*-dynamical systems} 

\subsection{Haagerup property for discrete groups} A discrete group $G$ is said to have the \emph{Haagerup property} \cite{bo} if there exists a net $\{\varphi_i\}_{i\in I}$ of normalized positive definite functions on $G$ vanishing at infinity such that $\varphi_i \to 1$ pointwise on $G$. See \cite{ccjjv} for some equivalent definitions when $G$ is countable.

\subsection{Haagerup property for $C^*$-algebras w.r.t.~ states} Let $A$ be a $C^*$-algebra $A$ and $\psi$ be a state on $A$.
We consider the GNS-construction associated to $\psi$ and denote by $\langle \cdot, \cdot\rangle_{\psi}$ the (right) semi-inner product on $A$ defined by 
\[ \langle a, b\rangle_{\psi} = \psi(a^*b) \quad \text{for all} \, a,b \in A.\]
Setting $N_{\psi}:=\{ a \in A : \psi(a^*a)=0\}$ and $A_{\psi}:= A/N_{\psi}$, we get an inner product on $A_{\psi}$ given by
\[\langle a + N_{\psi}, b + N_{\psi}\rangle_{\psi} = \psi(a^*b) \quad \text{for all} \, a, b \in A.\]
In the sequel, we will sometimes write $[a]_{\psi}$ instead of $a + N_{\psi}$ for elements in $A_{\psi}$.
We let $H_{\psi}$ denote the Hilbert space obtained from completing $A_{\psi}$, and 
write $\|\cdot\|_\psi$ for the associated norm.
For $a\in A$, we also set \[\|a\|_{\psi}:= \|[a]_\psi\|_{\psi}= \psi(a^*a)^{1/2} \leq \|a\|.\] 

Now, if $\Phi:A\to A$ is a contractive completely positive map satisfying $\psi\circ \Phi \leq \psi$, we get a contractive linear map $S_\Phi \in \B(H_\psi)$ determined by 
\[S_\Phi([a]_\psi) = [\Phi(a)]_\psi \quad \text{ for all } a \in A,\]
cf.~[Lemma 3.1]\cite{mw21}. According to \cite[Definition 3.2]{mw21}, the pair \emph{$(A, \psi)$ is said to have the Haagerup property} whenever there exists a net $\{\Phi_i\}_{i\in I}$ of contractive completely positive maps on $A$ satisfying
\begin{itemize}
    \item[a)] $\psi\circ \Phi_i \leq \psi$ for every $i\in I$;
    \item[b)] each $S_{\Phi_i}$ is a compact operator on $H_\psi$;
    \item[c)] $\{S_{\Phi_i}\}_{i\in I}$ converges to $I_{H_\psi}$ in the strong operator topology (SOT).
\end{itemize}
Note that  $c)$ is equivalent to requiring that $\lim_i \|\Phi_i(a)-a\|_{\psi} = 0$ for all $a\in A$. 
This definition, which is even formulated for proper weights in \cite{mw21}, generalizes Dong's definition \cite{dong}
 when $\psi$ is assumed to be a faithful tracial state on $A$ (as for instance in \cite{suzuki13}). 
 When $A$ is unital,  one may always assume that each $\Phi_i$ is unital and $\psi\circ \Phi_i = \psi$ for every $i\in I$, cf.~\cite[Remark 3.4]{mw21}. 

 For a discrete group $G$, it is well known that $G$ has the Haagerup property if and only if $(C_r^*(G), \tau)$ has the Haagerup property, where $\tau$ denotes the canonical tracial state on $C_r^*(G)$, cf.~\cite[Theorem 2.6]{dong} and  \cite[Theorem 2.4(1)]{suzuki13}.
 If $A$ is a  unital nuclear $C^*$-algebra and $\tau$ is a  faithful tracial state on $A$, then $(A,\tau)$ has the Haagerup property \cite[Theorem 3.6]{suzuki13}.
 
\begin{example} \label{CAR1}
As we are not aware of any specific place in the literature with a discussion of examples of a unital $C^*$-algebra $A$ having a faithful non-tracial state $\psi$ such that $(A,\psi)$ has the Haagerup property, we mention here a notable one. Let $M_{2^\infty}$ be the CAR-algebra (i.e., the UHF algebra of type $2^\infty$), and let $\omega_\lambda$ be the Powers state on $M_{2^\infty}$ associated to some $\lambda\in (0,1/2)$, cf.~\cite[Definition 4.2]{powers}. Then $\omega_\lambda$ is faithful, but not tracial, and it is not difficult to show that $(M_{2^\infty}, \omega_\lambda)$ has the Haagerup property. More generally, similar examples can be produced for any UHF-algebra. Of course, UHF-algebras are nuclear, and one may wonder whether Suzuki's result mentioned above also holds for a unital nuclear $C^*$-algebra $A$ when $\tau$ is a non-tracial faithful state on $A$.  
\end{example}

\subsection{Haagerup properties for discrete unital $C^*$-dynamical systems} Let $\Sigma=(A, G, \alpha)$ is a discrete unital $C^*$-dynamical system. 
Recall from \cite{Claire} that  a map $\varphi: G \to A$ is called \emph{$\alpha$-positive definite} when \[[\alpha_{g_i}(\varphi(g_i^{-1} g_j))] \in M_n(A)^+\] for all $n \in {\mathbb N}$ and $g_1,\ldots,g_n \in G$. Also, a function $f: G \to A$ is said to vanish at infinity if, for any given $\epsilon >0$, there exists a finite subset $F \subset G$ such that $\|f(g)\| < \epsilon$
for every $g \in G \setminus F$, i.e., the function $g\mapsto \|f(g)\|$ belongs to $C_0(G)$. We will denote by $C_0(G,A)$ the space of all functions from $G$ to $A$ vanishing at infinity.

Recall that a function $f: G \to A$ is said to be normalized when $f(e) = 1_A$. The following definition is based on \cite{dong_ruan},  
but slightly adapted to fit with the convention used in \cite{Claire}. (See also \cite[Definition 5.6]{kls} for a more general definition in the case of twisted groupoid $C^*$-dynamical systems). 
If $A$ is a $C^*$-algebra, then we denote its center by $Z(A)$.

 The system \emph{$\Sigma = (A,G,\alpha)$ is said to have the Haagerup property (in the sense of Dong and Ruan)} if there exists a net $\{\varphi_i\}_{i\in I}$ of normalized $\alpha$-positive definite functions in $C_0(G,Z(A))$ such that \[\lim_{i} \|\varphi_i(g)-1_A\| = 0\] for every $g\in G$.
 It is immediate that $\Sigma$ has the above Haagerup property whenever $G$ has the Haagerup property.

 \medskip 
 Assume now that  there exists an  $\alpha$-invariant state $\tau$ on $A$. There is a notion of Haagerup property for $(\Sigma, \tau)$ introduced in \cite{gm22}, generalizing the one given in \cite{mstt} when $\tau$ is also assumed to be faithful and tracial. Recall that if $F=(F_g)_{g\in G}$ is a positive definite $\Sigma$-multiplier (equivalently, $T= (T_g)_{g\in G}$, where for every $g\in G$, $T_g((a, g)):=(F_g(a), g)$ for all $a\in A$, is a positive definite $\B_\Sigma$-bundle map)  
 such that $F_e$ is unital and $\tau\circ F_e \leq \tau$, then for each $g\in G$, the map $[a]_\tau \mapsto [F_g(a)]_\tau $
is bounded w.r.t.~$\|\cdot\|_{\tau}$, hence extends to a bounded operator $\widetilde F_g$ on $H_\tau$, cf.~\cite[Lemma 2.4]{gm22} and \cite[Lemma 3.2]{mstt}. The definition is as follows.

 \begin{definition} The pair \emph{$(\Sigma, \tau)$ is said to have the Haagerup property} if there exists a net $\{F^i\}_{i\in I}$ of positive definite $\Sigma$-multipliers such that 
\begin{itemize}
\item[i)] each $F^i_e$ is unital and $\tau\circ F^i_e \leq \tau$ for every $i\in I$;
\item[ii)] $\widetilde F^i_g$ is a compact operator on $H_\tau$ for every $i\in I$ and $g\in G$;
\item[iii)] the map $g\mapsto \|\widetilde F^i_g\|$ vanishes at infinity for each $i\in I$;
\item[iv)] $\lim_i \|F^i_g(a) - a\|_{\tau}= 0$ for every $g\in G$ and $a\in A$.
\end{itemize}
\end{definition}
In this definition,  $\tau$ is required to be $\alpha$-invariant, but it makes sense without this requirement. In the next section we will propose a definition of Haagerup property for Fell bundles and states without imposing such a restriction (which in that general setting has no obvious meaning).
We gather below some known results.
\begin{theorem} \label{HP-dyn}
Let $E:C_r^*(\Sigma)\to A$ denote the canonical conditional expectation. Then  $(\Sigma, \tau)$ has the Haagerup property 
if and only if  $(C_r^*(\Sigma), \tau\circ E)$ has the Haagerup property, 
in which case $G$ has the Haagerup property and $(A, \tau)$ has the Haagerup property.
\end{theorem}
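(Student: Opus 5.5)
Throughout, $\Sigma=(A,G,\alpha)$ is untwisted, $\tau$ is an $\alpha$-invariant state on $A$, and $E$ is the canonical conditional expectation. The key identification is
\[H_{\tau\circ E}\simeq \ell^2(G)\otimes H_\tau \simeq \textstyle\bigoplus_{g\in G} H_\tau ,\]
obtained from $[a\lambda_g]_{\tau\circ E}\mapsto \delta_g\otimes[a]_\tau$. This is isometric because $(\tau\circ E)\big((a\lambda_g)^*b\lambda_h\big)=\delta_{g,h}\,\tau(\alpha_g^{-1}(a^*b))=\delta_{g,h}\,\tau(a^*b)$, using $\alpha$-invariance of $\tau$. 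Under this identification, a reduced multiplier $M_F$ with $M_F(a\lambda_g)=F_g(a)\lambda_g$ acts on $H_{\tau\circ E}$ as the block-diagonal operator $\bigoplus_g \widetilde F_g$.

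\textbf{($\Rightarrow$)} Given $\{F^i\}$, set $\Phi_i:=M_{F^i}$.
\begin{itemize}
\item By Theorem \ref{PDCP}, $\Phi_i$ is completely positive. It is unital since $F^i_e(1)=1$, hence contractive.
\item $\tau\circ E\circ\Phi_i=\tau\circ F^i_e\circ E\le \tau\circ E$ on positive elements, because $E\circ M_{F^i}=F^i_e\circ E$ and $E$ is positive.
\item $S_{\Phi_i}=\bigoplus_g\widetilde F^i_g$ is compact, since every block is compact (ii) and the block norms vanish at infinity (iii).
\item SOT convergence to the identity: on a spanning set $\delta_g\otimes[a]_\tau$ it follows from (iv). The family $\{S_{\Phi_i}\}$ is uniformly bounded by $1$, so convergence extends to all vectors.
\end{itemize}

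\textbf{($\Leftarrow$)} Given $\Phi_i$ on $C_r^*(\Sigma)$, by \cite[Remark 3.4]{mw21} we may assume each $\Phi_i$ is unital with $\tau\circ E\circ\Phi_i=\tau\circ E$. Define the compressed multipliers
\[F^i_g(a):=E\big(\Phi_i(a\lambda_g)\lambda_g^*\big).\]
This is the standard averaging trick (as in \cite{mstt}, \cite{gm22}). Positive definiteness of $F^i$ follows because, for $a_k\in A$ and $g_k\in G$,
\[\big[\alpha_{g_k}\big(F^i_{g_k^{-1}g_l}(\cdot)\big)\big]\ \text{ is of the form }\ \big[E\big(\lambda_{g_k}\Phi_i(x_k^*x_l)\lambda_{g_l}^*\big)\big],\]
which is a positive matrix by complete positivity of $\Phi_i$ and of $E$. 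Moreover $F^i_e=E\circ\Phi_i|_A$ is unital and satisfies $\tau\circ F^i_e=\tau$.

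On $H_{\tau\circ E}$, the operator $\widetilde F^i_g$ equals $P_g S_{\Phi_i}P_g$ transported to $H_\tau$, where $P_g$ is the projection onto the $g$-th summand $\delta_g\otimes H_\tau$. Hence each $\widetilde F^i_g$ is compact. The identity $\sum_g\|S P_g\|^2$-type tails show that $\|P_gSP_g\|\to0$ as $g\to\infty$ for any compact $S$, since $P_g\to0$ strongly. Finally, SOT convergence of $S_{\Phi_i}$ gives
\[\big\|F^i_g(a)-a\big\|_\tau=\big\|P_g(S_{\Phi_i}-I)(\delta_g\otimes[a]_\tau)\big\|\to 0 .\]

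The main technical point is the positive definiteness check together with the boundedness needed to define $\widetilde F^i_g$. The latter uses \cite[Lemma 2.4]{gm22}, and a Kadison–Schwarz estimate gives $\|F^i_g(a)\|_\tau\le\|a\|_\tau$.

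\textbf{Consequences.} $(A,\tau)$ has the Haagerup property via the maps $F^i_e$: their $\widetilde F^i_e$ are compact and converge strongly to the identity. For $G$, set
\[\varphi_i(g):=\tau\big(F^i_g(1)\big),\qquad\text{equivalently}\qquad \varphi_i(g)=\langle S_{\Phi_i}(\delta_g\otimes\xi_\tau),\,\delta_g\otimes\xi_\tau\rangle .\]
This is a positive definite function on $G$: the functional $\tau\circ E$ is $\alpha$-invariant, and restricting to the image of $C^*_r(G)$ gives a state-preserving compression. Each $\varphi_i$ is normalized, satisfies $|\varphi_i(g)|\le\|\widetilde F^i_g\|\to0$, and converges pointwise to $1$ by (iv).

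The hardest step is verifying positive definiteness of $\varphi_i$ cleanly. I would prove it directly:
\[\sum_{k,l}\bar c_k c_l\,\varphi_i(g_k^{-1}g_l)=\tau\Big(\sum_{k,l}\bar c_k c_l\,\alpha_{g_k}\big(F^i_{g_k^{-1}g_l}(1)\big)\Big)\ge 0,\]
using $\alpha$-positive definiteness of $F^i$ evaluated at $a_k=1$ together with $\alpha$-invariance of $\tau$.
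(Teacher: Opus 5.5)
Your proof is correct and follows essentially the argument the paper relies on: the equivalence is the crossed-product specialization of the proof of Theorem~\ref{HP-Fell} (itself adapted from \cite{mstt}). In one direction you use $S_{M_F}=\bigoplus_g\widetilde F_g$; in the other, $\widetilde F^i_g$ is the compression of $S_{\Phi_i}$ to the $g$-th summand, for $F^i_g(a)=E(\Phi_i(a\lambda_g)\lambda_g^*)$. The consequences are exactly Propositions~\ref{Haag-descend} and~\ref{HP-group}, with $\varphi_i(g)=\tau(F^i_g(1))$.

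Your reduction via \cite[Remark 3.4]{mw21} to unital $\Phi_i$ is genuinely needed, since the definition for $(\Sigma,\tau)$ requires $F^i_e$ to be unital. You should drop the aside about ``$\sum_g\|SP_g\|^2$-type tails'': that sum can diverge for compact $S$, and $\|P_gSP_g\|\le\|P_gS\|\to0$ already follows from compactness of $S$ together with $P_g\to0$ strongly.
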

\begin{proof}
See \cite[Theorem 3.8, Corollary 3.9]{mstt}, \cite[Theorem 2.9]{gm22} and \cite[Corollary 3.14 and Proposition 3.16]{mw21}. 
\end{proof}
It follows immediately from this result that $\Sigma$ has the Haagerup property (in the sense of Dong-Ruan) whenever $(\Sigma, \tau)$ has the Haagerup property for some invariant state $\tau$ on $A$. Note also that it is not necessarily true that $(C_r^*(\Sigma), \tau\circ E)$ has the Haagerup property when $G$ has the Haagerup property and $(A, \tau)$ has the Haagerup property, cf.~\cite[Remark 3.10]{mstt}. However, if $G$ is  amenable, then $(C_r^*(\Sigma), \tau\circ E)$ has the Haagerup property if and only if $(A, \tau)$ has the Haagerup property (cf.~\cite[Theorem 2.5]{dong} and \cite[Theorem 3.19]{mw21}). 

\section{Haagerup property for Fell bundles}\label{sec:haag_bundle}

Let $\B=(B_g)_{g\in G}$ be a Fell bundle over $G$ and  $T=(T_g)_{g\in G}$ be a positive definite $\B$-bundle map. 
We recall that 
\begin{equation} \label{T-eq}
    T_g(b)^*T_g(b)\, \leq \, \|T_e\| \,T_e(b^*b) \quad \text{(in $B_e$) for all } g \in G, b\in B_g,
    \end{equation}
cf.~\cite[Proposition 3.6]{bc25}. Let $\psi$ be a state on $B_e$ and $g\in G$. Then we can define a (right) semi-inner product on $B_g$ by 
\[ \langle b, c\rangle_{g, \psi} = \psi(b^*c) \quad \text{for all} \, b,c \in B_g.\]
Setting $N_{g, \psi}:=\{ b \in B_g : \psi(b^*b)=0\}$ and $B_{g, \psi}:= B_g/N_{g, \psi}$, we get an inner product on $B_{g, \psi}$ given by
\[\langle b + N_{g, \psi}, c + N_{g, \psi}\rangle_{g, \psi} = \psi(b^*c) \quad \text{for all} \, b,c \in B_g.\]
In the sequel, we will often write $[b]_{g, \psi}$ instead of $b + N_{g, \psi}$ for elements in $B_{g, \psi}$. We also set
\[ \|b\|_{g, \psi}:= \|[b]_{g, \psi}\|_{g,\psi} = \psi(b^*b)^{1/2} \leq \|b\| \]
for all $g\in G$ and $b\in B_g$. Further,
we let $H_{g, \psi}$ denote the Hilbert space obtained from completing $B_{g, \psi}$, and 
write $\|\cdot\|_{g, \psi}$ for the associated norm. Then the following analog of \cite[Lemma 3.2]{mstt} holds.
\begin{lemma} \label{MSTT} 
Assume $\psi$ is a state on $B_e$ such that $\psi\circ T_e \leq M \,\psi$ for some $M>0$ and let $g\in G$. Then the assignment \[b + N_{g, \psi} \mapsto T_g(b) + N_{g, \psi}\] for each $b \in B_g$ extends to a bounded linear map $\widetilde{T_g}$ on $H_{g, \psi}$ such that
$\|\widetilde{T_g}\| \leq (M\, \|T_e\|)^{1/2}$. In particular, if $\psi\circ T_e \leq \,\psi$ and $T_e$ is contractive, then $\widetilde{T_g}$ is contractive.
\end{lemma}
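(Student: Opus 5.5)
The plan is to prove a single norm estimate on the dense subspace $B_{g,\psi}$ of $H_{g,\psi}$. Everything then follows by the standard extension argument.

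The key input is inequality \eqref{T-eq}, which holds because $T$ is positive definite. For $b\in B_g$ it gives
\[
T_g(b)^*T_g(b)\le \|T_e\|\,T_e(b^*b) \quad\text{in } B_e .
\]
Since $\psi$ is a state, it is positive, so applying $\psi$ preserves the inequality. Combining this with the hypothesis $\psi\circ T_e\le M\psi$, applied to the positive element $b^*b\in B_e$, we get
\[
\|T_g(b)\|_{g,\psi}^2=\psi\big(T_g(b)^*T_g(b)\big)\le \|T_e\|\,\psi\big(T_e(b^*b)\big)\le M\|T_e\|\,\psi(b^*b)=M\|T_e\|\,\|b\|_{g,\psi}^2 .
\]
Here $T_e(b^*b)\ge 0$ because $T_e$ is positive. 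This follows from positive definiteness, either by taking $n=1$ and $g_1=e$ in \eqref{Bposdef} with an approximate unit, or directly from Theorem \ref{PDCP}. We do not actually need this positivity, though. The hypothesis $\psi\circ T_e\le M\psi$ is naturally read as an inequality of functionals on positive elements, and $b^*b$ is already positive.

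Next comes well-definedness. If $b\in N_{g,\psi}$, the estimate gives $\psi(T_g(b)^*T_g(b))=0$, so $T_g(b)\in N_{g,\psi}$. Because $T_g$ is linear, the assignment $[b]_{g,\psi}\mapsto[T_g(b)]_{g,\psi}$ is therefore a well-defined linear map on $B_{g,\psi}$. The same estimate shows it is bounded with norm at most $(M\|T_e\|)^{1/2}$. It then extends by continuity to a bounded operator $\widetilde{T_g}$ on the completion $H_{g,\psi}$, with the same norm bound.

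The final assertion is the special case $M=1$ and $\|T_e\|\le 1$. There is no real obstacle in this argument. The only point that needs care is making sure the order inequality \eqref{T-eq} transfers under $\psi$, and that the hypothesis on $\psi\circ T_e$ is applied to a positive element of $B_e$.
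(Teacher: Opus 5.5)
Your proof is correct and follows the paper's argument essentially verbatim. You apply $\psi$ to inequality \eqref{T-eq} and use $\psi\circ T_e\le M\psi$ on the positive element $b^*b$ to get $\psi(T_g(b)^*T_g(b))\le M\|T_e\|\,\psi(b^*b)$, from which well-definedness, the norm bound, and the extension to $H_{g,\psi}$ follow.
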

\begin{proof} Let $g\in G$ and $b \in B_g$.
Then, using our standing assumptions, and the inequality (\ref{T-eq}), we get
\[ \psi(T_g(b)^*T_g(b)) \leq \|T_e\| \psi (T_e(b^*b)) \leq M \|T_e\| \psi(b^*b).\]
This implies that the map $b + N_{g, \psi} \mapsto T_g(b) + N_{g, \psi}$ is well-defined and linear on $B_{g, \psi}$. Moreover, we get that
\[ \|T_g(b) + N_{g, \psi}\|_{g, \psi}^2 = \psi(T_g(b)^*T_g(b)) \leq M\|T_e\|\, \|b + N_{g, \psi}\|_{g, \psi}^2\]
for all $g\in G$ and $b\in B_{g}$, and the conclusion clearly follows.
\end{proof}
Using this lemma, we can make the following definition.
\begin{definition}
 Assume  $T=(T_g)_{g\in G}$ is a $\B$-bundle map and $\psi$ is a state on $B_e$ such that $\psi\circ T_e \leq M \,\psi$ for some $M>0$.
We will say that $T$ is \emph{compact w.r.t.~$\psi$} when each $\widetilde{T_g}$ is a compact operator on $H_{g, \psi}$. 
\end{definition}
\begin{definition}\label{def:haag_bundle}
Let $\B=(B_g)_{g\in G}$ be a Fell bundle over $G$ and $\psi$ be a state on $B_e$.
We will say that the pair \emph{$(\B, \psi)$ has the Haagerup property} if there exists a net $\{T^i\}_{i\in I}$ of positive definite $\B$-bundle maps such that 
\begin{itemize}
\item[i)] $T^i_e$ is contractive and $\psi\circ T^i_e \leq \psi$ for every $i\in I$;
\item[ii)] $T^i$ is compact w.r.t.~$\psi$ for every $i\in I$;
\item[iii)] the map $g\mapsto \big\|\widetilde{T^i_g}\big\|$ vanishes at infinity for each $i\in I$;
\item[iv)] $\lim_i \|T^i_g(b) - b\|_{g, \psi}= 0$ for every $g\in G$ and $b\in B_g$.
\end{itemize}
\end{definition}
\begin{example}\label{ex:haag}
    a)  If $G$ is the trivial group, so that $\B = B_e$, then the Haagerup property for $(\B,\psi)$ coincides with the Haagerup property for $(B_e,\psi)$.
    
    b) If $\B = ({\mathbb C} \times \{g\})_{g \in G}$ is the group bundle over $G$, then the Haagerup property of $(\B,\psi)$ corresponds to the Haagerup property for $G$ ($\psi$ being the unique state on $\mathbb C$).  
    
    c) More generally, if $\Sigma=(A, G, \alpha)$ is a unital discrete $C^*$-dynamical system, $\tau$ is an $\alpha$-invariant state  on $A$ and $\B_\Sigma = (A\times \{g\})_{g\in G}$ denotes the Fell bundle over $G$ canonically associated to $\Sigma$, cf.~\cite{exel}, then one may easily check that $(\B_\Sigma, \tau)$ has the Haagerup property if and only if $(\Sigma, \tau)$ has the Haagerup property. This relies on the tight connection between $\Sigma$-positive definite multipliers and positive definite $\B_\Sigma$-bundle maps (cf.~\cite[Example 4.6]{bc25}).  
\end{example}

The equivalence part of Theorem \ref{HP-dyn} can be generalized to Fell bundles:
\begin{theorem}\label{HP-Fell}
Let $\B=(B_g)_{g\in G}$ be a Fell bundle, $\psi$ be a state on $B_e$ and $E_e: C_r^*(\B)\to B_e$ denote the canonical conditional expectation.
Then $(\B, \psi)$ has the Haagerup property if and only if $(C_r^*(\B), \psi \circ E_e)$ has the Haagerup property.  
\end{theorem}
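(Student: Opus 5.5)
The plan rests on one structural observation: the GNS space of $\psi\circ E_e$ splits as an orthogonal direct sum of the fiberwise spaces $H_{g,\psi}$. Set $\omega:=\psi\circ E_e$. For $f\in C_c(\B)$ we have $E_e(\iota^\B(f)^*\iota^\B(f))=\sum_g f(g)^*f(g)$, so
\[
\omega(\iota^\B(f)^*\iota^\B(f))=\sum_{g\in G}\|f(g)\|_{g,\psi}^2 .
\]
Since $\iota^\B(C_c(\B))$ is dense in $C_r^*(\B)$ and $\|\cdot\|_\omega\le\|\cdot\|$, the map $[\lambda^\B_g(b)]_\omega\mapsto[b]_{g,\psi}$ extends to a unitary $H_\omega\cong\bigoplus_{g\in G}H_{g,\psi}$. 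Let $P_g$ denote the projection onto the $g$-th summand. Both directions are then read off through this decomposition.

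($\Rightarrow$) Let $\{T^i\}$ be as in Definition~\ref{def:haag_bundle}. By Theorem~\ref{PDCP}, each $M_{T^i}$ is a completely positive map on $C_r^*(\B)$. I expect $\|M_{T^i}\|\le\|T^i_e\|\le 1$; this is a norm estimate for completely positive multipliers, which I would import from \cite{bc25} or prove via an approximate unit of $B_e$ together with (\ref{T-eq}). From (\ref{M_T}) we get $E_e\circ M_{T^i}=T^i_e\circ E_e$, hence $\omega\circ M_{T^i}\le\omega$. On the dense subspace, $S_{M_{T^i}}[\lambda_g^\B(b)]_\omega=[\lambda^\B_g(T^i_g(b))]_\omega$, so under the unitary
\[
S_{M_{T^i}}=\bigoplus_g\widetilde{T^i_g}.
\]
A block-diagonal operator with compact blocks whose norms vanish at infinity is compact, being a norm limit of finite direct sums; this uses ii) and iii). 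Finally, the $S_{M_{T^i}}$ are contractions, and iv) gives convergence to the identity on the dense span of the vectors $[\lambda_g^\B(b)]_\omega$. Hence the convergence holds in SOT.

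($\Leftarrow$) Let $\{\Phi_i\}$ witness the Haagerup property for $(C_r^*(\B),\omega)$, and define $T^i_g(b):=E_g(\Phi_i(\lambda^\B_g(b)))$. The main obstacle is proving that $T^i$ is positive definite. I would do this by writing
\[
M_{T^i}=Q\circ(\Phi_i\otimes{\rm id})\circ\delta .
\]
Here $\delta:C_r^*(\B)\to C_r^*(\B)\otimes_{\min}C_r^*(G)$, $\lambda_g^\B(b)\mapsto\lambda_g^\B(b)\otimes\lambda_g$, is the canonical coaction, an injective $*$-homomorphism. The map $Q:C_r^*(\B)\otimes_{\min}C_r^*(G)\to C_r^*(\B)$, $\lambda^\B_g(b)\otimes\lambda_h\mapsto\delta_{g,h}\lambda^\B_g(b)$, should be a contractive completely positive map obtained by compressing with a suitable isometry $\ell^2(\B)\to\ell^2(\B)\otimes\ell^2(G)$, namely $\xi\mapsto\sum_h\xi(h)\otimes\delta_h$. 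Checking on generators that $Q\circ(\Phi_i\otimes{\rm id})\circ\delta$ sends $\lambda^\B_g(b)$ to $\lambda_g^\B(E_g(\Phi_i(\lambda_g^\B(b))))$ shows that $T^i$ is reduced with completely positive multiplier. Theorem~\ref{PDCP} then gives positive definiteness, and $\|M_{T^i}\|\le1$.

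Condition i) holds because $T^i_e=E_e\circ\Phi_i|_{B_e}$ is contractive and $\psi\circ T^i_e\le\omega\circ\Phi_i\le\omega$ on $B_e$. For compactness, a direct computation shows that $\langle[\lambda_g^\B(b)],S_{\Phi_i}[\lambda_h^\B(c)]\rangle_\omega$ involves only $E_g(\Phi_i(\lambda^\B_h(c)))$. Hence
\[
\widetilde{T^i_g}=P_gS_{\Phi_i}P_g ,
\]
which is compact, giving ii). For iii), since $\sum_g P_g=I$ strongly and $S_{\Phi_i}$ is compact, $\|P_gS_{\Phi_i}P_g\|\le\|S_{\Phi_i}P_g\|\to0$ as $g\to\infty$ (approximate $S_{\Phi_i}$ by finite-rank operators). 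For iv), $\|T^i_g(b)-b\|_{g,\psi}=\|P_g(S_{\Phi_i}-I)[\lambda^\B_g(b)]_\omega\|\to0$.
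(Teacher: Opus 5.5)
Your proof is correct and follows essentially the same route as the paper. For ($\Rightarrow$) it uses the identification $H_{\psi\circ E_e}\cong\bigoplus_{g}H_{g,\psi}$, under which $S_{M_{T^i}}=\bigoplus_g\widetilde{T^i_g}$; for ($\Leftarrow$) it sets $T^i_g=E_g\circ\Phi_i\circ\lambda^\B_g$ and realizes $\widetilde{T^i_g}$ as the $g$-th diagonal compression of $S_{\Phi_i}$.

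The only real difference is the positive definiteness of $T^i$: the paper simply cites \cite[Proposition 3.8]{bc25}, whereas you reprove it via the coaction and a compression. Your argument is valid but heavier than needed, since directly $\sum_{k,l} b_k T^i_{g_k^{-1}g_l}(a_k^*a_l)b_l^* = E_e\big(\sum_{k,l}\lambda^\B_{g_k}(b_k)\,\Phi_i\big(\lambda^\B_{g_k}(a_k)^*\lambda^\B_{g_l}(a_l)\big)\,\lambda^\B_{g_l}(b_l)^*\big)\geq 0$.
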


\begin{proof}
We adapt the proof of \cite[Theorem 3.8]{mstt} to the present setting. 

We introduce some notation. Let $\psi'$ be the state on $C_r^*(\B)$ given by $\psi'=\psi\circ E_e$. For $g\in G$, let $V_g: H_{g,\psi}\to H_{\psi'}$ be the natural isometry 
determined by \[V_g([b]_{g, \psi}) = [\lambda_g^\B(b)]_{\psi'} \quad \text{for all } b \in B_g,\] and let $P_g\in B(H_{\psi'})$ denote the orthogonal projection onto the range of $V_g$. 
 One readily sees that $V_g^*([x]_{\psi'}) = [E_g(x)]_{g, \psi}$ for every $g\in G$ and $x\in C_r^*(\B)$.

    Assume first $\{T^i\}_{i\in I}$ is a net of  positive definite $\B$-bundle maps implementing the Haagerup property for $(\B,\psi)$.  Let $i\in I$. By \cite[Theorem 3.14]{bc25}, the associated completely positive map $M_{T^i}: C_r^*(\B)\to C_r^*(\B)$ satisfies $\|M_{T^i}\|=\|T_e^i\|\leq 1$, and we have $$\psi'(M_{T^i}(x))=(\psi\circ E_e)(M_{T^i}(x))=\psi(T^i_e( E_e(x)))\leq \psi(E_e(x))=\psi'(x),$$ 
 for all $x\in C_r^*(\B)^+$. Next, identifying each $H_{g, \psi}$ with its image in $H_{\psi'}$ via $V_g$, we observe that $$H_{\psi'}=\bigoplus_{g\in G} H_{g,\psi} \text{ and }S_{ M_{T^i}}=\bigoplus_{g\in G} \widetilde{T_g^i}.$$ As $g\mapsto \|\widetilde{T_g^i}\|$ vanishes at infinity and each $\widetilde{T_g^i}$ is compact for every $g\in G$,  we obtain that $S_{M_{T^i}}$ is compact on $H_{\psi'}$. The convergence of $S_{ M_{T^i}}$ to $I_{H_{\psi'}}$ follows easily from density arguments and the fact that $\lim_i\|T_g^i(b)-b\|_{g,\psi}=0$ for all $g\in G$ and $b\in B_g$.

    Conversely, assume that $(C_r^*(\B), \psi')$ has the Haagerup  property and let $\{\Phi_i\}_{i\in I}$ be the corresponding net of contractive completely positive maps on $C_r^*(\B)$. For $i\in I$, $g\in G$ and $b\in B_g$, set \[T_g^i(b):= E_g(\Phi_i(\lambda^\B_g(b))),\] where $E_g(x)$ denotes the $g$-Fourier coefficient of $x \in C_r^*(\B)$. Let $i\in I$. By \cite[Proposition 3.8]{bc25}, $T^i=(T_g^i)_{g\in G}$ is a positive definite $\B$-bundle map. Moreover, as $E_e$ and $\Phi_i$ are contractive,
    \[\|T_e^i(b)\|=\|E_e(\Phi_i(\lambda^\B_e(b)))\|\leq \|\lambda^\B_e(b)\| = \|b\|\] 
    for all $b\in B_e$, i.e., $T^i_e$ is contractive. Since
    $$(\psi\circ T_e^i)(b)=(\psi\circ E_e)(\Phi_i(\lambda^\B_e(b)))\leq (\psi\circ E_e)(\lambda^\B_e(b))=\psi(b)$$
    for all $b\in B_e^+$, we also get $\psi\circ T_e^i\leq\psi$. Thus we can form $\widetilde{T_g^i}\in B(H_{g,\psi})$  for every $g\in G$, cf.~Lemma \ref{MSTT}.
   
 We note that $\widetilde{T_g^i}=V_g^*S_{\Phi_i}V_g$ for every $g\in G$.  Indeed, for all $b\in B_g$, we have
\begin{align*}
(V_g^*S_{\Phi_i}V_g)([b]_{g, \psi})& 
= (V_g^*S_{\Phi_i})([\lambda_g^\B(b)]_{\psi'}) = V_g^*([\Phi_i(\lambda_g^\B(b))]_{\psi'})\\
&=[E_g(\Phi_i(\lambda_g^\B(b)))]_{g, \psi} = [T^i_g(b)]_{g, \psi}
=\widetilde{T_g^i}([b]_{g, \psi}).
\end{align*}
Since $S_{\Phi_i}$ is compact, we see that $\widetilde{T_g^i}=V_g^*S_{\Phi_i}V_g$ is compact.

Consider now a countably infinite subset $\{g_n\}_{n\in\mathbb N}$ of $G$. Then $P_{g_n}$ converges to $0$ strongly as $n\to\infty$. Moreover, as 
 $V_g=P_gV_g$ for every $g\in G$ and $S_{\Phi_i}$ is compact for every $i\in I$, we get that
 $\|\widetilde{T_{g_n}^i}\| \leq \|P_{g_n}S_{\Phi_i}P_{g_n}\|\to  0 $ as ${n\to\infty}$ for every $i\in I$. This implies that the function $g\mapsto \|\widetilde{T_g^i}\|$ vanishes at infinity for all $i\in I$.

 Finally, for every $g\in G$ and $b\in B_g$, 
 \begin{align*}
 \|\widetilde{T_g^i}([b]_{g,\psi})-[b]_{g,\psi}\|_{g,\psi}&=\|V_g^*(S_{\Phi_i}-I)V_g([b]_{g,\psi})\|_{g,\psi}\\
 &\leq \|S_{\Phi_i}([\lambda_g^\B(b)]_{\psi'})-[\lambda_g^\B(b)]_{\psi'}\|_{\psi'}\to_{i} 0,
 \end{align*}
 completing the proof.  
 \end{proof}

\begin{example}\label{CAR-2}
Let $G=\bigoplus_{k=1}^\infty \mathbb{Z}_2$ act on the Cantor set $\Gamma= \prod_{k=1}^\infty
\mathbb{Z}_2$ by translation, and let $\alpha$ denote the associated action of $G$ on $A:=C(\Gamma)$. Set $\Sigma=(A, G, \alpha)$. Then it is well-known that $C_r^*(\Sigma)\simeq M_{2^\infty}$ (see for example \cite[II.10.4.12.(iii)]{bla}). Let $\lambda \in (0,1/2)$ and let $\psi_\lambda$ be the restriction of the Powers state $\omega_\lambda$ on $M_{2^\infty}$ to $A$, so $\omega_\lambda= \psi_\lambda\circ E$, where $E$ is the conditional expectation from $C_r^*(\Sigma)$ onto $A$. Then $\psi_\lambda$ is a faithful tracial state on $A$ (which is not $\alpha$-invariant since $\omega_\lambda$ is not tracial). Moreover, $(A, \psi_\lambda)$ has the Haagerup property (this follows from \cite[3.6]{suzuki13}, but can be proved directly without much trouble), and $(C_r^*(\Sigma), \psi_\lambda\circ E) \simeq (M_{2^\infty}, \omega_\lambda)$ has the Haagerup property, cf.~Example \ref{CAR1}. Hence $(\B_\Sigma, \psi_\lambda)$ has the Haagerup property by Theorem \ref{HP-Fell}.  
\end{example}
 
 One may wonder whether the second part of Theorem \ref{HP-dyn} also holds in the setting of Fell bundles: 
 If $(\B,\psi)$ has the Haagerup property, is it true that
$G$ has the Haagerup property and $(B_e,\psi)$ has the Haagerup property?

While it is not clear that $G$ must have the Haagerup property in general, we can give a direct proof 
that $(B_e,\psi)$ has the Haagerup property. This fact can alternatively be deduced by combining Theorem 
\ref{HP-Fell} with \cite[Proposition 3.8]{mw21}.

\begin{proposition} \label{Haag-descend}
Suppose that $(\B,\psi)$ has the Haagerup property. Then
$(B_e,\psi)$ has the Haage\-rup property. 
\end{proposition}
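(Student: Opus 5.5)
The plan is to take a net $\{T^i\}_{i\in I}$ of positive definite $\B$-bundle maps implementing the Haagerup property for $(\B,\psi)$, and to show that the net of unit-fiber components $\{T^i_e\}_{i\in I}$ already implements the Haagerup property for $(B_e,\psi)$ in the sense of \cite[Definition 3.2]{mw21}. Only the conditions at $g=e$ are needed; condition iii) of Definition \ref{def:haag_bundle} plays no role.

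First, we check that each $T^i_e: B_e\to B_e$ is completely positive. Take $g_1=\cdots=g_n=e$ in the positive definiteness condition: for all $a_1,\dots,a_n\in B_e$ the matrix $[T^i_e(a_k^*a_l)]$ is positive in $M_n(B_e)=M_{(e,\dots,e)}(\B)$. Any positive matrix $[x_{kl}]\in M_n(B_e)$ is a finite sum of matrices of the form $[a_k^*a_l]$, since one may write it as $R^*R$ with $R\in M_n(B_e)$. Hence this is exactly complete positivity of $T^i_e$. Alternatively, one can use (\ref{Bposdef}) with the $b_k$'s ranging over an approximate unit, or restrict the completely positive map $M_{T^i}$ from Theorem \ref{PDCP} to $B_e=\lambda^\B_e(B_e)$, on which it agrees with $T^i_e$ by (\ref{M_T}). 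The restriction argument is the cleanest and avoids any approximate-unit issue in the non-unital case.

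Next, we verify the three conditions. Condition i) gives that $T^i_e$ is contractive with $\psi\circ T^i_e\le\psi$, which is condition a). The Hilbert space $H_{e,\psi}$ is by construction exactly the GNS space $H_\psi$ of $(B_e,\psi)$, and $\widetilde{T^i_e}$ from Lemma \ref{MSTT} coincides with $S_{T^i_e}$, since both are determined by $[b]_\psi\mapsto[T^i_e(b)]_\psi$. Condition ii) at $g=e$ then gives compactness of $S_{T^i_e}$, which is condition b). Condition iv) at $g=e$ gives $\|T^i_e(b)-b\|_\psi\to 0$ for all $b\in B_e$. As noted after \cite[Definition 3.2]{mw21}, this is equivalent to SOT-convergence to $I_{H_\psi}$; rigorously, the uniform bound $\|S_{T^i_e}\|\le1$ combined with convergence on the dense subspace $B_{e,\psi}$ yields strong convergence. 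This is condition c).

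The only step needing care is complete positivity of $T^i_e$; the restriction of $M_{T^i}$ via Theorem \ref{PDCP} settles it. Everything else is a direct identification of definitions.
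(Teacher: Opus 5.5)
Your proposal is correct and follows the paper's proof. The paper also sets $\Phi_i := T^i_e$, notes that $S_{\Phi_i} = \widetilde{T^i_e}$, and reads off the conditions of \cite[Definition 3.2]{mw21} from i), ii) and iv) at $g=e$. The only difference is that the paper gets complete positivity of $T^i_e$ by citing \cite[Proposition 3.6]{bc25}, whereas you derive it directly, via the matrix form of positive definiteness with $\mathbf{g}=(e,\ldots,e)$ or by restricting $M_{T^i}$ to $\lambda^\B_e(B_e)$; both derivations are valid.
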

\begin{proof}
Let $\{T^i\}_{i\in I}$ be the net of positive definite $\B$-bundle maps as in the definition of the Haagerup property for $(\B,\psi)$, and set $\Phi_i := T^i_e$. Then $\{\Phi_i\}_{i\in I}$ is a net of contractive completely positive maps on $B_e$ as in the definition of the Haagerup property of $(B_e,\psi)$. Here, $T^i_e$ is completely positive by \cite[Proposition 3.6]{bc25} and $S_{\Phi_i} = \widetilde{T^i_e}$.
\end{proof}

\medskip The following result was proved in \cite[Corollary 3.9]{mstt} in the case of a Fell bundle associated to a discrete $C^*$-dynamical system  and an invariant tracial state. However, traciality is not needed in this proof, cf.~\cite[Proposition 3.16]{mw21}, and one can also allow a scalar-valued twist. We present the argument for completeness. 

\begin{proposition}\label{HP-group}
    Let $\B$ be a unital crossed product bundle over $G$,  with unitary section $u$ and associated twisted action $(\alpha, \sigma)$ of $G$ on $B_e$ such that $\sigma$ is scalar-valued. Assume that $\psi$ is an $\alpha$-invariant state on $B_e$ such that $(\B,\psi)$ has the Haagerup property. Then $G$ has the Haagerup property.
\end{proposition}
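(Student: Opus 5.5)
Let $\{T^i\}_{i\in I}$ be a net implementing the Haagerup property for $(\B,\psi)$. The idea, as in \cite[Corollary 3.9]{mstt}, is to compress each $T^i$ along the unitary section and against $\psi$ to obtain scalar functions on $G$. For $i\in I$ and $g\in G$ I would set
\[\varphi_i(g) := \psi\big(T^i_g(u_g)\,u_g^*\big).\]
The plan is to show that each $\varphi_i$ is positive definite, that $\{\varphi_i\}$ is uniformly bounded at $e$, that each $\varphi_i$ vanishes at infinity, and that $\varphi_i\to 1$ pointwise. After normalizing (for instance replacing $\varphi_i$ by $\varphi_i/\varphi_i(e)$ once $\varphi_i(e)\to 1$, or dividing when $\varphi_i(e)>0$), this gives the Haagerup property for $G$. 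We may assume $u_e=1$, so that $\varphi_i(e)=\psi(T^i_e(1))\le \psi(1)=1$.

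\textbf{Positive definiteness.} This is the main step. Take $g_1,\dots,g_n\in G$ and $c_1,\dots,c_n\in\mathbb C$. By \eqref{Bposdef} with $a_k=u_{g_k}$ and $b_k=\bar c_k u_{g_k}$, the element
\[X=\sum_{k,l}\bar c_k c_l\, u_{g_k} T^i_{g_k^{-1}g_l}\big(u_{g_k}^*u_{g_l}\big) u_{g_l}^*\]
is positive in $B_e$. Since $\sigma$ is scalar-valued, $u_{g_k}^*u_{g_l}=\mu_{kl}\,u_{g_k^{-1}g_l}$ for some scalar $\mu_{kl}$ of modulus one. Writing $h=g_k^{-1}g_l$, we have $u_{g_l}^*=\bar\mu_{kl}\,u_h^*u_{g_k}^*$, so the scalars cancel and
\[X=\sum_{k,l}\bar c_k c_l\,\alpha_{g_k}\big(T^i_h(u_h)u_h^*\big).\]
Here $\alpha_g={\rm Ad}(u_g)$ and $T^i_h(u_h)u_h^*\in B_e$. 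Applying $\psi$ and using $\alpha$-invariance gives $\sum_{k,l}\bar c_k c_l\,\varphi_i(g_k^{-1}g_l)\ge0$. This is exactly where both the scalar twist and the invariance of $\psi$ are used.

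\textbf{Vanishing at infinity and convergence.} By Cauchy--Schwarz,
\[|\varphi_i(g)|=|\langle u_g, T^i_g(u_g)\rangle_{g,\psi}|\le \|\widetilde{T^i_g}\|\,\|u_g\|_{g,\psi}^2=\|\widetilde{T^i_g}\|,\]
using $\psi(u_g^*u_g)=\psi(1)=1$. Condition iii) of Definition \ref{def:haag_bundle} then shows that $\varphi_i\in C_0(G)$. Similarly,
\[|\varphi_i(g)-1|=|\psi(u_g^*(T^i_g(u_g)-u_g))|\le \|T^i_g(u_g)-u_g\|_{g,\psi}\to 0\]
by iv). In particular $\varphi_i(e)\to1$, so normalizing for large $i$ is harmless: the normalized functions remain positive definite, vanish at infinity, and still converge pointwise to $1$.

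I expect the only delicate point to be the bookkeeping in the positivity computation, namely checking that $\psi(u_g x u_g^*)=\psi(x)$ is indeed the invariance being used and that the phases coming from the twist cancel.
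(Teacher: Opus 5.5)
Your proposal is correct and follows essentially the same route as the paper: the same functions $\varphi_i(g)=\psi(T^i_g(u_g)u_g^*)$, positive definiteness from \eqref{Bposdef} with $a_k=u_{g_k}$ using the scalar twist and $\alpha$-invariance, and Cauchy--Schwarz bounds in the GNS spaces for vanishing at infinity and pointwise convergence. There are two minor differences. First, your identity $\psi(T^i_g(u_g)u_g^*)=\psi(u_g^*T^i_g(u_g))$ silently uses $\alpha$-invariance once more, via $u_g^*x=\alpha_g^{-1}(xu_g^*)$; the paper instead works in $H_{g^{-1},\psi}$ using $T_g(b)^*=T_{g^{-1}}(b^*)$. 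Second, your explicit normalization step handles a point the paper leaves implicit.
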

\begin{proof}
Let $\{T^i\}_{i\in I}$ be a net of positive definite $\B$-bundle maps implementing the Haagerup property of $(\B,\psi)$. Then the net $\{\varphi_i\}_{i\in I}$ given by 
     \[\varphi_i(g)=\psi(T_g^i(u(g))u(g)^*)\] 
for $i\in I$ and $g\in G$ will implement the Haagerup property of $G$. 

Indeed, let $i\in I$ and $g_1, \ldots, g_n \in G$. The positive definiteness of $T^i$ gives that the matrix 
 \[\Big[u(g_k)T_{g_k^{-1}g_l}^i(u(g_k)^*u(g_l))u(g_l)^*\Big]_{k,l=1}^n\] 
is positive in $M_n(B_e)$. Since $\psi$ is completely positive, being a state, this implies that  \[\Big[\psi\Big(u(g_k)T_{g_k^{-1}g_l}^i(u(g_k)^*u(g_l))u(g_l)^*\Big)\Big]_{k,l=1}^n\]  is  positive in $M_n(\mathbb{C})$. 

For each $g\in G$, let $\psi_g$ be the state on $B_e$ defined by $\psi_g(a)=\psi(u(g) a u(g)^*)$. Since $\psi$ is $\alpha$-invariant, $\psi=\psi_g$ for every $g\in G$. Thus  \[\psi(u(g) b u(h)^*) = \psi(b u(h)^*u(g))\] for every $g,h\in G$ and $b\in B_{g^{-1}h}$. Then, since $\sigma$ is scalar-valued, we get that
\begin{align*} 
\psi\Big(u(g_k)T_{g_k^{-1}g_l}^i&(u(g_k)^* u(g_l))u(g_l)^*\Big)
= \psi\Big(T_{g_k^{-1}g_l}^i(u(g_k)^*u(g_l))u(g_l)^*u(g_k)\Big)\\
&=\psi\Big(T_{g_k^{-1}g_l}^i\big(\overline{\sigma(g_k, g_k^{-1}g_l)}u(g_k^{-1}g_l)\big)\sigma(g_k, g_k^{-1}g_l)u(g_k^{-1}g_l)^*\Big)\\
&=\psi\Big(T_{g_k^{-1}g_l}^i(u({g_k}^{-1}g_l))u(g_k^{-1}g_l)^*\Big) \\
&= \varphi_i(g_k^{-1}g_l)
\end{align*}
for all $i\in I$ and all $k, l \in \{1, \ldots, n\}$. It follows that $[\varphi_i(g_k^{-1}g_l)]_{k,l}$ is positive in $M_n(\mathbb{C})$,  hence that $\varphi_i$ is positive definite for every $i\in I$. As $T_g(b)^*=T_{g^{-1}}(b^*)$ and $u(g)^*=u(g^{-1})\overline{\sigma(g,g^{-1})}$ for every $g\in G$ and $b\in B_g$, we get 
    \begin{align*}
    &|\varphi_i(g)|=|\psi(T_g^i(u(g))u(g)^*)|=\Big|\langle [T_g^i(u(g))^*]_{g^{-1},\psi},[u(g)^*]_{g^{-1},\psi}\rangle_{g^{-1},\psi}\Big|\\
    &=\Big|\langle [T_{g^{-1}}^i(u(g^{-1}))]_{g^{-1},\psi},[u(g^{-1})]_{g^{-1},\psi}\rangle_{g^{-1},\psi}\Big|\leq\|\widetilde{T_{g^{-1}}^i}\|
    \end{align*}
    for every $g\in G$. Thus, $\varphi_i$ vanishes at infinity for each $i\in I$. Finally, using again the equalities above,  for each $g\in G$, we obtain 
    \begin{align*}
        \big|\varphi_i(g)-1\big|&=\Big|\psi\Big((T_g^i(u(g))-u(g))u(g)^*\Big)\Big|\\
        & = \Big|\Big\langle [(T_g^i(u(g))-u(g))^*]_{g^{-1},\psi}, [u(g)^*]_{g^{-1},\psi}\Big\rangle_{g^{-1}, \psi}\Big| \\ 
        &  \leq \|\widetilde{T_{g^{-1}}^i}([u(g^{-1})]_{g^{-1},\psi})-[u(g^{-1})]_{g^{-1},\psi}\|_{g^{-1},\psi} \ \longrightarrow_{i} 0.
    \end{align*}
\end{proof}
\begin{remark}\label{weak-inv} In the proof above, we only need to assume that $\psi$ is a state on $B_e$
satisfying $\psi(u(h)T^i_g(u(g))u(g)^*u(h)^*)=\psi(T^i_g(u(g))u(g)^*)$ for all $i\in I$ and $g,h\in G$, which is weaker than assuming $\alpha$-invariance of $\psi$.    
\end{remark}

Let $\B$ be a unital Fell bundle over $G$ and $\psi$ be a state on $B_e$. In view of Proposition \ref{Haag-descend}, an interesting problem is to find additional condition(s)  
ensuring that the Haagerup property for $(B_e, \psi)$ implies the Haagerup property for $(\B, \psi)$ (equivalently, for $(C_r^*(\B), \psi\circ E_e)$). 
A natural additional condition is $C^*$-amenability of $\B$. In the special case where $G$ is amenable, the question then becomes: if $(B_e,\psi)$ has the Haagerup property and $G$ is amenable, is it true that $(\B, \psi)$ has the Haagerup property (as it is the case in Example \ref{CAR-2})?

In the general setting this problem looks quite challenging, but we will provide a quite satisfactory answer to it for crossed product bundles in Theorem \ref{crprod-bundle}.

\section{Crossed product bundles and Haagerup properties}\label{cross_product_bundle}

We will  need the following observation (see the proof of \cite[Corollary 3.9]{mstt} for the case where
$\sigma$ is trivial). 
\begin{lemma}\label{posdef}
Let $\cl B$ be a unital crossed product bundle, with unitary section $u$ and associated twisted action $(\alpha, \sigma)$ of $G$ on $B_e$.
    Assume $\psi$ is an $\alpha$-invariant state on $B_e$. Let $\xi\in C_c(G, B_e)$, $T=(T_g)_{g\in G}$ be a positive definite $\B$-bundle map, and define $\varphi: G \to \mathbb{C}$ by
    \[ \varphi(g) = \psi \Big(\sum_{p\in G}\xi(p)^*T_g(\sigma(g,g^{-1}p)^*u_g)\xi(g^{-1}p)u_g^*\sigma(g,g^{-1}p)\Big)
\] for all $g\in G$. Then $\varphi \in C_c(G)$ is positive definite.
    \end{lemma}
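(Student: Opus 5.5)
The plan is to prove both claims by direct computation. Finite support is immediate. Positive definiteness will come from applying the defining inequality (\ref{Bposdef}) of a positive definite $\B$-bundle map, then applying $\psi$ and using $\alpha$-invariance to recognise the result as $\sum_{k,l}\overline{c_k}c_l\varphi(g_k^{-1}g_l)$.

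\emph{Finite support.} If $\varphi(g)\neq 0$, then some term of the sum is nonzero, so there is a $p\in G$ with $\xi(p)\neq 0$ and $\xi(g^{-1}p)\neq0$. Hence $g\in \operatorname{supp}(\xi)\operatorname{supp}(\xi)^{-1}$, which is a finite set, and $\varphi\in C_c(G)$.

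\emph{Positive definiteness.} Fix $g_1,\dots,g_n\in G$ and $c_1,\dots,c_n\in\mathbb C$; we must show $\sum_{k,l}\overline{c_k}c_l\varphi(g_k^{-1}g_l)\ge 0$. In $\varphi(g_k^{-1}g_l)$ we substitute $p=g_k^{-1}r$. Then $\xi(p)^*=\xi(g_k^{-1}r)^*$ and $\xi(g^{-1}p)=\xi(g_l^{-1}r)$, so the sum becomes a sum over $r$ in a finite set whose $(k,l)$-entries factor as an expression in $k$ times an expression in $l$, with $T_{g_k^{-1}g_l}$ in the middle.

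Now fix $r$. Apply (\ref{Bposdef}) with the same group elements $g_1,\dots,g_n$ and
\[ a_k:=u_{g_k}\in B_{g_k},\qquad b_k:=\overline{c_k}\,u_{g_k}\,\xi(g_k^{-1}r)^*\,v_k\in B_{g_k}, \]
where $v_k\in B_e$ is a unitary built from $\sigma$ and the $u$'s, still to be fixed. This gives a positive element
\[ X_r=\sum_{k,l} b_k\,T_{g_k^{-1}g_l}(u_{g_k}^*u_{g_l})\,b_l^*\in B_e^+. \]
We then set $X=\sum_r X_r\ge 0$ and apply $\psi$, so that $\psi(X)\ge 0$.

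It remains to match $\psi(X)$ with $\sum_{k,l}\overline{c_k}c_l\varphi(g_k^{-1}g_l)$. This takes three ingredients.
\begin{itemize}
\item[(1)] We write $u_{g_k}^*u_{g_l}$ as a unitary of $B_e$ times $u_{g_k^{-1}g_l}$, using $\sigma(g,h)=u_gu_hu_{gh}^*$ and $u_g^*=u_{g^{-1}}\sigma(g,g^{-1})^*$. After left multiplication by the appropriate unitary this produces exactly the factor $\sigma(g,g^{-1}p)^*u_g$ with $g=g_k^{-1}g_l$, $p=g_k^{-1}r$. Here we use that $T_g$ is a $B_e$-bimodule map only where this is legitimate. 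Where it is not, the twisting unitaries have to be absorbed into the choice of the $v_k$ and into the $b_k$.
\item[(2)] We use $\alpha$-invariance of $\psi$ in the form $\psi(u_hxu_h^*)=\psi(x)$ for $x\in B_e$, exactly as in the proof of Proposition \ref{HP-group}. This removes the outer conjugation by $u_{g_k}$, i.e.\ it moves the leftmost $u_{g_k}$ of $b_k$ around to meet $u_{g_l}^*$ coming from $b_l^*$.
\item[(3)] We apply the cocycle identity $\sigma(g,h)\sigma(gh,k)=\alpha_g(\sigma(h,k))\sigma(g,hk)$ to collapse the remaining unitaries into $u_g^*\sigma(g,g^{-1}p)$, the tail of the defining formula for $\varphi$.
\end{itemize}
In the untwisted case ($\sigma\equiv1$) this is exactly the argument in the proof of \cite[Corollary 3.9]{mstt}, with $v_k=1$. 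That case serves as a check of the general computation.

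The main obstacle is step (1), together with the matching choice of the unitaries $v_k$. $T_g$ need not commute with left multiplication by elements of $B_e$, so the $\sigma$-factors cannot simply be pulled through $T$. The only freedom is to place them inside the arguments $a_k^*a_l$ or outside in the $b$'s, and they must land precisely as in the given formula for $\varphi$. That formula, with $T_g$ applied to $\sigma(g,g^{-1}p)^*u_g$, is clearly designed so that $u_{g_k}^*u_{g_l}$, after conjugating by $\alpha$-invariance, produces exactly this argument. I would first carry out the computation with $a_k=u_{g_k}$ and $v_k=1$, and then read off which correction unitaries $v_k$ are forced. If the $p$-dependence of $\sigma(g,g^{-1}p)$ prevents this, the fallback is to let $a_k$ depend on $r$ as well, namely $a_k=\sigma(g_k^{-1},r)^{\ast}u_{g_k}$ or a similar unitary multiple, still in $B_{g_k}$.
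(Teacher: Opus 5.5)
\textbf{There is a genuine gap.} Your outline matches the paper's: apply (\ref{Bposdef}) for each fixed $r$, sum over $r$, apply $\psi$, and use $\alpha$-invariance. Your finite-support argument is also fine. But the whole content of the lemma is the step you leave open, namely the choice of the $a_k$ and $b_k$, and both concrete choices you offer fail.

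\emph{Why your primary choice fails.} With $a_k=u_{g_k}$ and $g=g_k^{-1}g_l$, the argument of $T_g$ is
\[
u_{g_k}^*u_{g_l}=\alpha_{g_k}^{-1}(\sigma(g_k,g))^*u_g ,
\]
which does not depend on $r$. The corresponding summand of $\varphi(g)$ (with $p=g_k^{-1}r$) evaluates $T_g$ at $\sigma(g,g^{-1}p)^*u_g$, which does depend on $r$. Since $T_g$ need not be a left $B_e$-module map, unitaries placed outside $T_g$ cannot repair a discrepancy inside its argument. Moreover, your $v_l^*$ sits between $T_g(\cdot)$ and $\xi(g_l^{-1}r)$, so it cannot be moved. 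Even ignoring the $v_k$, the tail you get after cycling is $u_{g_l}^*u_{g_k}$. This equals the required $u_g^*\sigma(g,g^{-1}p)$ only if $\sigma(g_k,g_k^{-1}r)=\sigma(g_l,g_l^{-1}r)$.

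\emph{Why your fallback fails.} With $a_k=\sigma(g_k^{-1},r)^*u_{g_k}$ you get
\[
a_k^*a_l=u_{g_k}^*\sigma(g_k^{-1},r)\sigma(g_l^{-1},r)^*u_{g_l},
\]
which is not $\sigma(g,g^{-1}p)^*u_g$ in general.

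\textbf{The missing idea.} The $r$-dependence must sit inside $a_k$, in exactly the form read off from the formula for $\varphi$. Take
\[
a_k(r):=\sigma(g_k,g_k^{-1}r)^*u_{g_k}=u_r\,u_{g_k^{-1}r}^*\in B_{g_k},\qquad b_k(r):=\overline{c_k}\,a_k(r)\,\xi(g_k^{-1}r)^*,
\]
which is the paper's choice. With $g=g_k^{-1}g_l$ and $p=g_k^{-1}r$ one then has, identically,
\[
a_k(r)^*a_l(r)=u_p\,u_{g^{-1}p}^*=\sigma(g,g^{-1}p)^*u_g .
\]
Since $\psi\circ\alpha_h=\psi$ for all $h$, also $\psi\circ{\rm Ad}(\sigma(h,h'))=\psi$, and hence $\psi\circ{\rm Ad}(a_k(r))=\psi$ on $B_e$. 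It follows that $\psi(a_k(r)\,y\,a_l(r)^*)=\psi(y\,a_l(r)^*a_k(r))$ for every $y\in B_g$.

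Apply this to the $(k,l)$-term of $X_r$. The tail becomes $a_l(r)^*a_k(r)=u_g^*\sigma(g,g^{-1}p)$, so the term equals $\overline{c_k}c_l$ times the $p$-th summand of $\varphi(g)$. Summing over $r$ gives
\[
\sum_{k,l}\overline{c_k}c_l\,\varphi(g_k^{-1}g_l)=\sum_r\psi(X_r)\ge 0 .
\]
The paper reaches the same identity through a longer rewriting of $\varphi(g^{-1}h)$ using the cocycle relations. With the choice above, your step (3) becomes unnecessary.
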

    \begin{proof} Set $S={\rm supp}(\xi)$. Then one easily sees that $\varphi(g) = 0$ whenever $g\not\in S\cdot S^{-1}$. Since $S$ is finite, this shows that $\varphi$ is finitely supported.
    
    Next, let $g, h\in G$.  Then computing $\varphi(g^{-1}h)$, using that  $$u_{g^{-1}h}=\alpha_g^{-1}(\sigma(g,g^{-1}h))u_g^*u_h,$$
$$\alpha_g(\sigma(g^{-1}h,h^{-1}p))^*\sigma(g,g^{-1}h)=\sigma(g,g^{-1}p)\sigma(h,h^{-1}p)^*,$$ 
\noindent and the invariance of $\psi$ under every $\alpha_k$, hence also under each ${\rm Ad}(\sigma(k, l))$ for $k,l\in G$, we get
\begin{eqnarray*}
    &&
    \varphi(g^{-1}h)\\
    &&= 
    \psi\Big(\sum_{p\in G}\xi(p)^*T_{g^{-1}h}(\sigma(g^{-1}h,h^{-1}gp)^*u_{g^{-1}h})
    \xi(h^{-1}gp)u_{g^{-1}h}^*\sigma(g^{-1}h,h^{-1}gp)\Big)\\&&=
    \psi\Big(\sum_{p\in G}\xi(g^{-1}p)^*T_{g^{-1}h}(\sigma(g^{-1}h,h^{-1}p)^*u_{g^{-1}h})
    \xi(h^{-1}p)u_{g^{-1}h}^*\sigma(g^{-1}h,h^{-1}p)\Big)\\&&=
    \psi\Big(\sum_{p\in G}\xi(g^{-1}p)^*T_{g^{-1}h}(\sigma(g^{-1}h,h^{-1}p)^*\alpha_g^{-1}(\sigma(g,g^{-1}h))u_g^*u_h)\\&&\qquad\times\xi(h^{-1}p)(\alpha_g^{-1}(\sigma(g,g^{-1}h))u_g^*u_h)^*\sigma(g^{-1}h,h^{-1}p)\Big)\\&&=
    \psi\Big(\sum_{p\in G}\xi(g^{-1}p)^*T_{g^{-1}h}(u_g^*\alpha_g(\sigma(g^{-1}h,h^{-1}p)^*)\sigma(g,g^{-1}h)u_h)\\&&\qquad\times\xi(h^{-1}p)u_h^*\sigma(g,g^{-1}h)^*\alpha_g(\sigma(g^{-1}h,h^{-1}p))u_g\Big)\\&&=
    \psi\Big(\sum_{p\in G}\xi(g^{-1}p)^*T_{g^{-1}h}(u_g^*\sigma(g,g^{-1}p)\sigma(h,h^{-1}p)^*u_h)\\&&\qquad\times\xi(h^{-1}p)u_h^*\sigma(h,h^{-1}p)\sigma(g,g^{-1}p)^*u_g\Big)\\&&=
    \psi\Big(\sum_{p\in G}u_g^*\sigma(g,g^{-1}p)M(g,h,p)\sigma(g,g^{-1}p)^*u_g\Big)\\&&=
    \psi\Big(\sum_{p\in G}\big[\alpha_g^{-1}\circ \text{Ad}(\sigma(g,g^{-1}p))\big]\big(M(g,h,p)\big)\Big)=\psi\Big(\sum_{p\in G} M(g,h,p)\Big)
 \end{eqnarray*}
where
\[M(g,h,p):= \sigma(g,g^{-1}p)^*u_g\xi(g^{-1}p)^*\, T_{g^{-1}h}(u_g^*\sigma(g,g^{-1}p)\sigma(h,h^{-1}p)^*u_h)\,\xi(h^{-1}p)u_h^*\sigma(h,h^{-1}p)
\]

\smallskip \noindent Let ${\bf g}=(g_1,\ldots, g_n)\in G^n$, $\eta=(\eta_1,\ldots,\eta_n)\in \mathbb C^n$. For $p\in G$, set $$a_i(p)=\sigma(g_i,g_i^{-1}p)^*u_{g_i}\in B_{g_i}$$ and $$b_i(p)=\bar\eta_i\sigma(g_i,g_i^{-1}p)^*u_{g_i}\xi(g_i^{-1}p)^*\in B_{g_i}.$$ As $T=(T_g)_{g\in G}$ is positive definite, we have that
$$ h(p):=\sum_{i,j=1}^n b_i(p)T_{g_i^{-1}g_j}(a_i(p)^*a_j(p))b_j(p)^*\in B_e^+ \text{ for all }p\in G.$$ 
Thus
$$\sum_{i,j=1}^n \varphi (g_i^{-1}g_j)\bar\eta_i\eta_j=\sum_{p\in G}\psi(h(p))\geq 0.$$ This shows that $\varphi$ is positive definite.
\end{proof}

For the proof of our next result we need to recall that, as in \cite[section 5.1]{abf},  one may associate to a Fell bundle $\B=(B_{t})_{t\in G}$ a $W^*$-bundle $\B''=(B_t'')_{t\in G}$, where $B_t''$ is the $W^*$-completion of $B_t$ in the bidual $C^*(\B)''$, $C^*(\B)$ being the full cross-sectional $C^*$-algebra of $\B$ as defined in \cite{exel}. Then $B_t''$ is isometrically isomorphic to the bidual of $B_t$ for each $t\in G$. Moreover,  $\B''$ is a Fell bundle over $G$ such that $\B$ is a Fell subbundle of $\B''$, having the property that
the maps $B_t''\to B_{t^{-1}}''$, $b\mapsto b^*$, and $B_t''\to B_{st}''$, $b\mapsto ab$, are weak*-continuous. In what follows we write $Z(A)$ for the center of a $C^*$-algebra $A$. 

The following proposition is known in the context of $C^*$-dynamical systems \cite[Remarque 4.2]{Claire}, also in the twisted case when the cocycle is scalar-valued \cite[Corollary 5.18]{bc12}.
\begin{proposition} \label{invAPamen}
    Let $\cl B$ be a unital crossed product bundle, with unitary section $u$ and associated twisted action $(\alpha, \sigma)$ of $G$ on $B_e$.
    Assume that there exists an $\alpha$-invariant state $\psi$ on $B_e$ and $\B$ is $C^*$-amenable.
    Then $G$ is amenable. 
   \end{proposition}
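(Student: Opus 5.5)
Since $\B$ is $C^*$-amenable, it has Exel's approximation property (Definition \ref{ap}). So there is a net $\{\xi_i\}_{i\in I}$ of finitely supported functions $G\to B_e$ with $\sup_i\|\sum_h \xi_i(h)^*\xi_i(h)\|<\infty$ and
\[\Big\|\sum_{h}\xi_i(gh)^*b\,\xi_i(h)-b\Big\|\to 0 \quad\text{for all } g\in G,\ b\in B_g .\]
The plan is to use the $\alpha$-invariant state $\psi$ to convert this net into a net of finitely supported positive definite functions on $G$ converging pointwise to $1$. That is a standard characterization of amenability of $G$, so the proposition follows.

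Lemma \ref{posdef} is designed for exactly this step. Take $T$ to be the identity bundle map, $T_g=\mathrm{id}_{B_g}$, which is positive definite because $\sum_{i,j} b_i a_i^*a_j b_j^* = (\sum_i b_ia_i^*)(\sum_j b_ja_j^*)^*\ge 0$. For each $i$ define $\varphi_i$ as in Lemma \ref{posdef} with $\xi=\xi_i$. The lemma then says $\varphi_i\in C_c(G)$ is positive definite. It remains to simplify $\varphi_i(g)$ and compare it with the approximation property applied to $b=u_g\in B_g$.

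With $T_g=\mathrm{id}$, each summand of $\varphi_i(g)$ is
\[\xi_i(p)^*\sigma(g,g^{-1}p)^*u_g\,\xi_i(g^{-1}p)\,u_g^*\sigma(g,g^{-1}p).\]
Substitute $p=gh$ and use $\psi=\psi\circ\mathrm{Ad}(\sigma(g,h)^*)$, which follows from $\alpha$-invariance (as in the proof of Lemma \ref{posdef}), to move the trailing $\sigma(g,h)$ to the front. This should give
\[\varphi_i(g)=\psi\Big(\sum_h \sigma(g,h)\xi_i(gh)^*\sigma(g,h)^*\,u_g\xi_i(h)u_g^*\Big).\]
Compare this with $\psi\big(\sum_h\xi_i(gh)^*u_g\xi_i(h)u_g^*\big)$, which tends to $\psi(u_gu_g^*)=1$ by the approximation property. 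The discrepancy comes from the conjugation of $\xi_i(gh)^*$ by $\sigma(g,h)$.

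This $\sigma$-bookkeeping is the main obstacle. To avoid it, I would modify the choice: put $\xi'_i(p)$ so that the twist is absorbed, or choose the argument of $T$ in Lemma \ref{posdef} differently. Alternatively, I would redo the positive-definiteness argument of Lemma \ref{posdef} directly for
\[\varphi_i(g):=\psi\Big(\sum_h \xi_i(gh)^*u_g\xi_i(h)u_g^*\Big),\]
writing $u_g\xi_i(h)=\zeta_i(gh)$-type expressions via the elements $\eta_i(p):=u_{p}\xi_i(p^{-1}\cdot)$. Concretely, $\sum_{k,l}\bar c_k c_l\varphi_i(g_k^{-1}g_l)$ should be rewritten, using invariance of $\psi$ under $\mathrm{Ad}(u_{g_k})$, as $\psi$ applied to $\sum_p\big(\sum_k c_k u_{g_k}\xi_i(g_k^{-1}p)\cdots\big)^*(\cdots)$, a sum of positive elements of the form $x^*x$. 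When $\sigma$ is nontrivial, the cocycle identity $u_gu_h=\sigma(g,h)u_{gh}$ lets the twist factors cancel in pairs inside such a square.

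Once positive definiteness and finite support are settled, the convergence is immediate:
\[|\varphi_i(g)-1|\le\Big\|\sum_h\xi_i(gh)^*u_g\xi_i(h)-u_g\Big\|\,\|u_g^*\|\to0.\]
Finite support follows from the finite support of $\xi_i$. Normalization is not needed, since $\varphi_i(e)\to 1$ and one may rescale. Hence $1$ is a pointwise limit of finitely supported positive definite functions, so $G$ is amenable.
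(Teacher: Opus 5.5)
The gap is exactly the step you call ``the main obstacle'': you never obtain functions that are both positive definite and convergent to $1$. With $T=\mathrm{id}$, the function of Lemma~\ref{posdef} is positive definite. Its convergence to $1$, however, would require controlling the conjugation of $\xi_i(gh)^*$ by the $h$-dependent unitaries $\sigma(g,h)$, with $h$ running over the growing supports of the $\xi_i$. Exel's AP only controls $\sum_h\xi_i(gh)^*b\,\xi_i(h)$ for a fixed $b\in B_g$.

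Your ``direct'' function $\varphi(g)=\psi\big(\sum_h\xi(gh)^*u_g\xi(h)u_g^*\big)$ does converge to $1$. But it equals $\psi(T_g(u_g)u_g^*)$ for the positive definite bundle map $T_g(b)=\sum_h\xi(gh)^*b\,\xi(h)$. That is the construction of Proposition~\ref{HP-group}, which needs $\sigma$ scalar-valued. Indeed, set $\sigma_{kl}=\sigma(g_k,g_k^{-1}g_l)$ and $\eta_k(p)=\alpha_{g_k}(\xi(g_k^{-1}p))$. Using $u_{g_k^{-1}g_l}=u_{g_k}^*\sigma_{kl}u_{g_l}$ and $\alpha_{g_k}$-invariance, you get $\varphi(g_k^{-1}g_l)=\sum_p\psi\big(\eta_k(p)^*\sigma_{kl}\,\eta_l(p)\,\sigma_{kl}^*\big)$, and the unitaries $\sigma_{kl},\sigma_{kl}^*$ sandwiching $\eta_l(p)$ do not cancel.

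Here is a concrete counterexample. Let $\B$ be the bundle of the trivial action of $\mathbb{Z}_2=\{e,s\}$ on $B_e=M_2(\mathbb{C})$. Let $v_s\in B_s$ be the canonical unitary, which commutes with $B_e$ and satisfies $v_s^2=1$. Take the unitary section $u_e=1$, $u_s=wv_s$ with $w=\mathrm{diag}(1,i)$. Then $\alpha_s=\mathrm{Ad}(w)$, $\sigma(s,s)=w^2=\mathrm{diag}(1,-1)$ is not scalar, and the normalized trace $\psi$ is $\alpha$-invariant. For $\xi(e)=\xi(s)=E_{12}$ one has $u_sE_{12}u_s^*=-iE_{12}$. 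Hence $\varphi(e)=\psi(2E_{22})=1$ and $\varphi(s)=\psi(-2iE_{22})=-i$. Since $s=s^{-1}$, positive definiteness would force $\varphi(s)\in\mathbb{R}$, so $\varphi$ is not positive definite. Your argument therefore only covers scalar-valued $\sigma$ (already known from \cite{bc12}), or nets $\xi_i$ with central values.

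The missing idea is precisely centrality. The paper does not use Exel's AP net. From $C^*(\B)\simeq C_r^*(\B)$ and \cite[Theorem 6.10]{abf}, it obtains a bounded net of finitely supported $\xi_i$ with values in $Z(B_e'')$ such that $\sum_r\xi_i(r)^*b\,\xi_i(t^{-1}r)\to b$ weak$^*$ in $B_t''$ for all $t\in G$ and $b\in B_t$. It then replaces $\psi$ by the normal extension $\phi''$ of $\psi\circ E_e$ to the bidual. By weak$^*$-continuity, $\phi''$ is still invariant under $\mathrm{Ad}(u_g)$ on $B_e''$, hence under $\mathrm{Ad}(\sigma(g,h)^*)$. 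Since $\xi_i(p)$ and $u_g\xi_i(g^{-1}p)u_g^*$ are central in $B_e''$, the $\sigma$-conjugations in the Lemma~\ref{posdef}-type function disappear. So the finitely supported positive definite functions $\varphi_i$ equal $\phi''\big(\sum_p\xi_i(p)^*u_g\xi_i(g^{-1}p)u_g^*\big)$, which tend to $\phi''(1)=1$ by normality of $\phi''$.
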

\begin{proof}
   As $\B$ is $C^*$-amenable, we know that $C^*(\B)\simeq C_r^*(\B)$, cf.~\cite{exel, af19}. Moreover, by \cite[Theorem 6.10]{abf}, there exists a bounded net $\{\xi_i\}_{i\in I}$ in $\ell^2(G,Z(B_e''))$ of functions with finite support such that for $t\in G$ and $b\in B_t$, $$\lim_i\sum_{r\in G}\xi_i(r)^*b\xi_i(t^{-1}r)=b$$
      in $B_t''$ with respect to weak*-topology.  Set 
 $\phi:=\psi\circ E_e:C_r^*(\B)\to \mathbb{C}$ and write  $\phi'': C^*(\B)''\to \mathbb{C}$ for the  bidual.
 For each $g\in G$ let $\tilde\alpha_g$ be the $*$-automorphism of $B_e''$ given by $\tilde\alpha_g(x)=u_gxu_g^*$, cf.~\cite{abf}.
By the $\alpha$-invariance of $\psi$ and the weak*-continuity of $\phi''$ and $\tilde\alpha_g$, we get 
 that $\phi''(\tilde\alpha_g(x))=\phi''(x)$ for all $x\in B_e''$. This implies that $\phi''$ is invariant under ${\rm Ad}(\sigma(g,h)^*)$ for all $g, h\in G$.

Now, for each $i\in I$, define  $\varphi_i:G\to \mathbb{C}$  by
    $$\varphi_i(g)=\phi'' \Big(\sum_{p\in G}
\xi_i(p)^*\sigma(g,g^{-1}p)^*u_g\xi_i(g^{-1}p)u_g^*\sigma(g,g^{-1}p)\Big).$$
As in Lemma \ref{posdef} we see that each $\varphi_i$ is finitely supported and positive definite. Furthermore, using  that $\xi_i\in \ell^2(G, Z(B_e''))$ for each $i\in I$ and 
$$\sum_{p\in G}\xi_i(p)^*u_g\xi_i(g^{-1}p)u_g^*\to_i u_gu_g^* = 1_{B_e}$$ in $B_e''$ w.r.t.~the weak*-topology, we obtain
that for every $g\in G$
\begin{align*}
    \varphi_i(g) &=  \sum_{p\in G}
\phi''(\sigma(g,g^{-1}p)^*\xi_i(p)^*u_g\xi_i(g^{-1}p)u_g^*\sigma(g,g^{-1}p))\\
&=\sum_{p\in G} \phi''(\xi_i(p)^*u_g\xi_i(g^{-1}p)u_g^*)\\
&=\phi'' \Big(\sum_{p\in G}\xi_i(p)^*u_g\xi_i(g^{-1}p)u_g^*\Big)\to_i \phi''(1_{B_e}) = 1,
\end{align*}
hence that $G$ is amenable.
\end{proof}

The first statement in the next lemma generalizes \cite[Proposition 2.11]{mstt}.
\begin{lemma}\label{posdef2} 
Let $\cl B$ be a unital crossed product bundle, with unitary section $u$ and associated twisted action $(\alpha, \sigma)$ of $G$ on $B_e$.
    Assume $\Phi:B_e\to B_e$ is a  completely positive map. Let $\xi\in C_c(G, B_e)$, and define a $\B$-bundle map $T=(T_g)_{g\in G}$ by 
    \[T_g(au_g)=\sum_{p\in G}\xi(p)^*[(\alpha_p\circ\Phi\circ\alpha_p^{-1})(a\sigma(g, g^{-1}p))] \sigma(g, g^{-1}p)^* u_g\xi(g^{-1}p)\]
for all $g\in G$ and $a\in B_e$.  Then $T$ is positive definite.

Moreover, assume that $\psi$ is an $\alpha$-invariant state on $B_e$, $\psi\circ \Phi \leq m \psi$ for some $m>0$ and $\Phi$ induces a compact operator on $H_{\psi}$. 

Then  $\psi\circ T_e \leq M \, \psi$ for some $M>0$ (depending on $\xi$) and $T$ is compact w.r.t.~$\psi$ whenever one of the following conditions holds:
\begin{itemize}
    \item[a)] $\psi$ is tracial;
    \item[b)] $\xi(h)\in Z(B_e)$ for all $h\in G$;
    \item [c)] $\xi(h)$ lies in the multiplicative domain of $\psi$ for all $h\in G$. 
\end{itemize}
    \end{lemma}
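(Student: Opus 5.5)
The plan has two parts: positive definiteness of $T$ by a direct check of (\ref{Bposdef}), and then the estimate $\psi\circ T_e\le M\psi$ together with compactness, by identifying each $H_{g,\psi}$ with $H_\psi$ and writing $\widetilde{T_g}$ as a finite sum of operators of the form bounded$\circ$compact$\circ$bounded.

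\textbf{Positive definiteness.} I would verify (\ref{Bposdef}) directly. Fix $g_1,\dots,g_n$ and $a_i=c_iu_{g_i}$, $b_i=d_iu_{g_i}\in B_{g_i}$ with $c_i,d_i\in B_e$. Using $u_{g_i}^*=\alpha_{g_i}^{-1}(\cdot)$-type identities and the cocycle relations (as in the proof of Lemma \ref{posdef}), rewrite
\[a_i^*a_j=x_{ij}\,u_{g_i^{-1}g_j},\qquad x_{ij}\in B_e,\]
and then expand $T_{g_i^{-1}g_j}(a_i^*a_j)$ by the defining formula as a sum over $p\in G$. The key step is the substitution $q=g_ip$ in the $(i,j)$-term. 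After it, the cocycle factors $\sigma(g_i^{-1}g_j,g_j^{-1}q)$ and the conjugations by $u_{g_i}$ should combine, exactly as in the identity
\[\alpha_g(\sigma(g^{-1}h,h^{-1}p))^*\sigma(g,g^{-1}h)=\sigma(g,g^{-1}p)\sigma(h,h^{-1}p)^*\]
used in Lemma \ref{posdef}. The goal is to obtain
\[\sum_{i,j} b_iT_{g_i^{-1}g_j}(a_i^*a_j)b_j^*=\sum_{q\in G}\sum_{i,j} X_i(q)^*\,\Phi_q\big(Y_i(q)^*Y_j(q)\big)\,X_j(q),\qquad \Phi_q:=\alpha_q\circ\Phi\circ\alpha_q^{-1},\]
for suitable $X_i(q),Y_i(q)\in B_e$ built from $c_i,d_i,\xi(g_i^{-1}q)$, $u$'s and $\sigma$'s. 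Since $\Phi_q$ is completely positive and $[Y_i^*Y_j]\ge0$ in $M_n(B_e)$, each inner double sum is $X^*\Phi_q^{(n)}([Y_i^*Y_j])X\ge0$, and so the whole expression is positive. The cocycle bookkeeping in this regrouping is where I expect the main obstacle. I would first carry it out in the untwisted case, to see the shape of $X_i$ and $Y_i$, and then insert the $\sigma$'s.

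\textbf{The estimate $\psi\circ T_e\le M\psi$.} Since $\sigma(e,\cdot)=1$, for $a\ge0$ we have
\[\psi(T_e(a))=\sum_p\psi\big(\xi(p)^*\Phi_p(a)\xi(p)\big),\]
and $\psi\circ\Phi_p=\psi\circ\Phi\circ\alpha_p^{-1}\le m\psi$ by $\alpha$-invariance. It remains to bound each term $\psi(\xi(p)^*y\,\xi(p))$ for $y\ge0$, which is done case by case.
In case a), traciality gives $\psi(\xi^*y\xi)=\psi(y^{1/2}\xi\xi^*y^{1/2})\le\|\xi\|^2\psi(y)$.
In case b), centrality gives $\xi^*y\xi=y^{1/2}\xi^*\xi y^{1/2}\le\|\xi\|^2y$.
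In case c), the multiplicative domain gives $\psi(\xi^*y\xi)=|\psi(\xi)|^2\psi(y)$.
Summing over the finite support of $\xi$ yields $M=m\sum_p\|\xi(p)\|^2$.

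\textbf{Compactness.} The map $[au_g]_{g,\psi}\mapsto[a]_\psi$ is unitary from $H_{g,\psi}$ onto $H_\psi$, because $\psi(u_g^*a^*au_g)=\psi(\alpha_g^{-1}(a^*a))=\psi(a^*a)$. Under this identification $\widetilde{T_g}$ is a finite sum over $p$ of compositions of the following operators on $H_\psi$:
\begin{itemize}
\item left multiplication by $\xi(p)^*$, which is always bounded;
\item right multiplication by the unitary $\sigma(g,g^{-1}p)$ or its adjoint, which is isometric by $\mathrm{Ad}(\sigma)$-invariance of $\psi$;
\item right multiplication by $u_g\xi(g^{-1}p)u_g^*=\alpha_g(\xi(g^{-1}p))$;
\item $U_pS_\Phi U_p^*$, where $U_p[a]_\psi=[\alpha_p(a)]_\psi$ is unitary by invariance.
\end{itemize}
Right multiplication by $\alpha_g(\xi(\cdot))$ is bounded in each case. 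In case a) this follows from traciality. In case b) it coincides with left multiplication, since $\alpha_g$ preserves the center. In case c) I need to check that $\alpha_g$ preserves the multiplicative domain, which holds because $\psi\circ\alpha_g=\psi$; then $\psi(b^*yb)=|\psi(b)|^2\psi(y)$. Since $S_\Phi$ is compact, each summand is bounded$\circ$compact$\circ$bounded, so $\widetilde{T_g}$ is compact.
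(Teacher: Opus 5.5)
Your proposal is correct and follows the paper's proof essentially step by step. The paper carries out exactly the regrouping you anticipate. The one ingredient you do not name is $\alpha_{g_i}\circ\alpha_{g_i^{-1}q}=\mathrm{Ad}(\sigma(g_i,g_i^{-1}q))\circ\alpha_q$, which is what turns the $i$-dependent map $\alpha_{g_i^{-1}q}\circ\Phi\circ\alpha_{g_i^{-1}q}^{-1}$ into the single map $\Phi_q$. Combined with $q=g_ip$ and the cocycle identity you quote, it gives $X_i(q)=\sigma(g_i,g_i^{-1}q)^*\alpha_{g_i}(\xi(g_i^{-1}q))\,d_i^*$ and $Y_i(q)=c_i\,\sigma(g_i,g_i^{-1}q)$ in your notation. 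Your estimate for $\psi\circ T_e$ and your compactness argument (via $U_pS_\Phi U_p^*$ and isometric right multiplications by the $\sigma$'s) are also the paper's. The only cosmetic difference is that the paper keeps $\xi(g^{-1}p)$ to the right of $u_g$ inside $H_{g,\psi}$, so it does not need your (correct) remark that $\alpha_g$ preserves $Z(B_e)$ and the multiplicative domain of $\psi$.
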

    \begin{proof}
 Let $g, h\in G$ and $a, b\in B_e$.  We will use below the following identities, valid for all $p\in G$, that are easily derived from the cocycle identities:
 \begin{itemize}
     \item[$(i)$] $u_g^*u_h = \alpha_g^{-1}(\sigma(g, g^{-1}h)^*)u_{g^{-1}h}$,
     \item[$(ii)$] $\sigma(g,g^{-1}h)^*\alpha_g(\sigma(g^{-1}h,h^{-1}p))\sigma(g,g^{-1}p)=\sigma(h,h^{-1}p)$,
     \item[$(iii)$]
$\alpha_g\circ\alpha_{g^{-1}p}= {\rm Ad}(\sigma(g,g^{-1}p))\circ\alpha_p$.
     \end{itemize}
     We compute
\begin{align*}
  &T_{g^{-1}h}((au_g)^*bu_h)=T_{g^{-1}h}(\alpha_g^{-1}(a^*b)u_g^*u_h) 
  \overset{(i)}{=} 
  T_{g^{-1}h}(\alpha_g^{-1}(a^*b\sigma(g, g^{-1}h)^*) u_{g^{-1}h})\\  
 &= 
\sum_{p\in G}\xi(p)^*(\alpha_p\circ\Phi\circ\alpha_p^{-1})\left(
\alpha_g^{-1}\big(a^*b\sigma(g, g^{-1}h)^*\big)
\sigma(g^{-1}h,(g^{-1}h)^{-1}p)\right)\\&\qquad\times
\sigma(g^{-1}h, (g^{-1}h)^{-1}p)^*u_{g^{-1}h}\xi(h^{-1}gp)\\
&=
\sum_{p\in G}\xi(g^{-1}p)^*(\alpha_{g^{-1}p}\circ\Phi\circ\alpha_{g^{-1}p}^{-1})\left(\alpha_g^{-1}(a^*b\sigma(g, g^{-1}h)^*)\sigma(g^{-1}h,h^{-1}p)\right)\\&\qquad\times\sigma(g^{-1}h, h^{-1}p)^*u_{g^{-1}h}\xi(h^{-1}p)\\&=
\sum_{p\in G}\xi(g^{-1}p)^*\alpha_{g^{-1}p}\left((\Phi\circ\alpha_{p}^{-1})\left(\left[{\rm Ad}(\sigma(g,g^{-1}p)^*)\right]\Big(a^*b\sigma(g, g^{-1}h)^*\alpha_g(\sigma(g^{-1}h,h^{-1}p))\Big)\right)\right)\\&\qquad\times\sigma(g^{-1}h, h^{-1}p)^*\alpha_g^{-1}(\sigma(g,g^{-1}h))u_g^*u_h\xi(h^{-1}p)\\&=
\sum_{p\in G}\xi(g^{-1}p)^*u_g^*\alpha_g\Big(\alpha_{g^{-1}p}\left((\Phi\circ\alpha_{p}^{-1})\left(\left[{\rm Ad}(\sigma(g,g^{-1}p)^*)\right]\Big(a^*b\sigma(g, g^{-1}h)^*\alpha_g(\sigma(g^{-1}h,h^{-1}p))\Big)\right)\right)\\&\qquad\times\sigma(g^{-1}h, h^{-1}p)^*\alpha_g^{-1}(\sigma(g,g^{-1}h))\Big)u_h\xi(h^{-1}p)\\&
\overset{(ii), (iii)}{=}
\sum_{p\in G}\xi(g^{-1}p)^*u_g^*\sigma(g,g^{-1}p)(\alpha_{p}\circ\Phi\circ\alpha_{p}^{-1})(\sigma(g,g^{-1}p)^*a^*b\sigma(h, h^{-1}p))
\sigma(h,h^{-1}p)^*u_h\xi(h^{-1}p).
\end{align*}

As $\alpha_p\circ\Phi\circ\alpha_p^{-1}$ is completely positive for each $p \in G$, 
$$\sum_{i,j=1}^nd_i(\alpha_p\circ\Phi\circ\alpha_p^{-1})(a_i^*a_j)d_j^*\geq 0$$
for all ${\bf a}=(a_1,\ldots, a_n)$ and ${\bf d}=(d_1,\ldots, d_n)\in B_e^n$. 

Let now ${\bf g}=(g_1,\ldots g_n)\in G^n$ and $b_i, c_i\in B_{g_i}$ for $i=1, \ldots, n$. Then, observing that 
$$d_i(p):=b_i\xi(g_i^{-1}p)^*u_{g_i}^*\sigma(g_i,g_i^{-1}p)\in B_e \text{ and }a_i(p):=c_iu_{g_i}^*\sigma(g_i, g_i^{-1}p)\in B_e$$ for all $i=1, \ldots, n$, we obtain 
\begin{align*}
    &\sum_{i,j=1}^n b_iT_{g_i^{-1}g_j}(c_i^*c_j)b_j^* = \sum_{i,j=1}^n b_iT_{g_i^{-1}g_j}\big(((c_iu_{g_i}^*)u_{g_i})^*(c_ju_{g_j}^*)u_{g_j}\big)b_j^*
      \\
     &=\sum_{p\in G}\sum_{i,j=1}^n d_i(p)(\alpha_{p}\circ\Phi\circ\alpha_{p}^{-1})(a_i(p)^*a_j(p))d_j(p)^*\geq 0.
\end{align*}
This shows that $T$ is positive definite. 

\smallskip
Next, assume $\psi$ is an $\alpha$-invariant state on $B_e$ satisfying a), b) or c), and  $\psi\circ \Phi \leq m \psi$ for some $m>0$. 

To show that $\psi\circ T_e \leq M \, \psi$ for some $M>0$, let $a\in B_e^+$. 
Assume a) holds. For each $p\in G$, set $c(p, a):=(\alpha_p\circ\Phi\circ\alpha_{p}^{-1})(a)^{1/2} \in B_e^+$. Then
\begin{align*}
  \psi(T_e(a))&= 
 \sum_{p\in G}  \psi\big(\xi^*(p)c(p,a)^2 \xi(p)\big)
  =\sum_{p\in G} \psi\big(c(p,a)\xi(p)\xi(p)^* c(p,a)\big)\\
  &\leq \sum_{p\in G} \|\xi(p)\|^2 \psi(c(p,a)^2) = \sum_{p\in G} \|\xi(p)\|^2 \psi((\alpha_p\circ\Phi\circ\alpha_{p}^{-1})(a))\\
  &\leq m \Big(\sum_{p\in G} \|\xi(p)\|^2\Big)\, \psi(a),
\end{align*}
so we can choose $M=m\big(\sum_{p\in G} \|\xi(p)\|^2\big)$.
Cases b) and c) can be handled in a similar way.

\smallskip
Assume further that $\Phi$ induces a compact operator on $H_{\psi}$.
Let $g\in G$. To show that $\widetilde{T_g}$ is compact on $H_{g, \psi}$,
we first observe that Lemma \ref{MSTT} gives that it is well-defined and bounded.
Fix $p\in G$. By additivity, it suffices to show that the bounded 
operator $K$ on $H_{g,\psi}$ determined by
\[[b]_{g, \psi}\mapsto \Big[\xi(p)^*(\alpha_p\circ\Phi\circ\alpha_p^{-1})(bu_g^*\sigma(g,g^{-1}p))\sigma(g,g^{-1}p)^*u_g\xi(g^{-1}p)\Big]_{g, \psi}\] 
is compact.
 
 We start by noticing that for each $g\in G$ the map $[b]_{g, \psi} \mapsto [bu_g^*\sigma(g,g^{-1}p)]_{\psi}$, $b\in B_g$, 
 is well-defined and induces an isometric linear map $R_1$ from $H_{g, \psi}$ to $H_{\psi}$. This follows readily from the $\alpha$-invariance of $\psi$, as it implies that
\[
 \psi\big((bu_g^*\sigma(g,g^{-1}p))^*bu_g^*\sigma(g,g^{-1}p)\big) =
 \psi\big({\rm Ad}(\sigma(g,g^{-1}p)^*) \alpha_g(b^*b)\big) = \psi(b^*b)
\]
for all $b\in B_g$. Similarly, the map $[a]_\psi \mapsto [a\sigma(g,g^{-1}p)^*u_g]_{g,\psi}$, $a\in B_e$, 
is well-defined and induces an isometric linear map $R_2$ from  $H_{\psi}$ to $H_{g, \psi}$.  
The $\alpha$-invariance of  $\psi$ also implies that $\alpha_p$ induces a unitary map $R_3$
on $H_\psi$.

Next, we  observe that the map  $[b]_{g, \psi}\mapsto [\xi(p)^*b\xi(g^{-1}p)]_{g, \psi}$, $b\in B_g$,  is well-defined 
 and extends to a bounded linear map $R_4$ on $H_{g, \psi}$ with $\|R_4\| \leq \|\xi(p)\|\,\|\xi(g^{-1}p)\|$.
 To see this, let $b\in B_g$. Then we have  
 \begin{align*}
 \psi\big((\xi(p)^*b\xi(g^{-1}p))^*\xi(p)^*b\xi(g^{-1}p)\big)& =    
 \psi\big(\xi(g^{-1}p)^*b^*\xi(p)\xi(p)^*b\xi(g^{-1}p)\big)\\
&\leq \|\xi(p)\|^2 \|\xi(g^{-1}p) \|^2 \psi(b^*b)
\end{align*}
 the last inequality above being easily verified by using condition a), b) or c), from which the claim follows readily. 

Now, the assumption says that $\Phi$ extends to a compact operator $S_\Phi$ on $H_\psi$.   
As $K$ can be written as
\[K = R_4\circ R_2\circ R_3\circ S_\Phi\circ R_3^{-1}\circ R_1,\]
we get that $K$ is compact on $H_{g, \psi}$ (with $\|K\| \leq \|\xi(p)\|\,\|\xi(g^{-1}p)\|\, \|S_\Phi\|$), as desired.
 \end{proof}

    \begin{theorem} \label{crprod-bundle}
  Let $\cl B$ be a unital crossed product bundle over $G$ with unitary section $u$ and associated twisted action $(\alpha, \sigma)$ of $G$ on $B_e$. 
  
 Assume that $\B$ is $C^*$-amenable and that there exists an  $\alpha$-invariant state $\psi$ on $B_e$ such that $(B_e,\psi)$ has the Haagerup property.

Then $G$ is amenable, $(\mathcal B, \psi)$ has the Haagerup property and $(C_r^*(\B), \psi\circ E_e)$ has the Haagerup property.
\end{theorem}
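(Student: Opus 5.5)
Amenability of $G$ follows at once from Proposition \ref{invAPamen}, since $\B$ is $C^*$-amenable and $\psi$ is an $\alpha$-invariant state on $B_e$. By Theorem \ref{HP-Fell}, the Haagerup property for $(C_r^*(\B),\psi\circ E_e)$ is equivalent to the one for $(\B,\psi)$. So the real task is to build a net of positive definite $\B$-bundle maps satisfying i)--iv) of Definition \ref{def:haag_bundle}. I will do this with Lemma \ref{posdef2}, combining a Haagerup net $\{\Phi_j\}$ for $(B_e,\psi)$ with a F\o lner-type net $\{\xi_k\}$ coming from amenability of $G$.

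\textbf{Choice of ingredients.} By \cite[Remark 3.4]{mw21} we may take each $\Phi_j$ unital completely positive with $\psi\circ\Phi_j=\psi$, $S_{\Phi_j}$ compact, and $S_{\Phi_j}\to I$ strongly. Since $G$ is amenable, there is a net of finitely supported $\eta_k:G\to\mathbb{C}$ with $\sum_p|\eta_k(p)|^2=1$ and $\sum_p\overline{\eta_k(p)}\eta_k(g^{-1}p)\to 1$ for each $g$. Put $\xi_k(p):=\eta_k(p)1_{B_e}$. These scalar functions are central, so condition b) of Lemma \ref{posdef2} holds. For $i=(j,k)$, the lemma then gives positive definite bundle maps $T^i$ that are compact w.r.t.~$\psi$, where
\[T^i_g(au_g)=\sum_p\overline{\eta_k(p)}\eta_k(g^{-1}p)\,(\alpha_p\Phi_j\alpha_p^{-1})(a\sigma(g,g^{-1}p))\sigma(g,g^{-1}p)^*u_g .\]
Each $T^i$ is supported in $\mathrm{supp}(\eta_k)\mathrm{supp}(\eta_k)^{-1}$, which is finite, so condition iii) is automatic. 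For condition i), $T^i_e(a)=\sum_p|\eta_k(p)|^2(\alpha_p\Phi_j\alpha_p^{-1})(a)$ is a convex combination of unital completely positive maps. Hence it is contractive, and $\psi\circ T^i_e=\psi$ because $\psi$ is $\alpha$-invariant and $\psi\circ\Phi_j=\psi$.

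\textbf{Main obstacle: convergence iv).} This is the step I expect to need the most care. We must show $\|T^i_g(au_g)-au_g\|_{g,\psi}\to0$. By the isometries $R_1,R_2$ from the proof of Lemma \ref{posdef2}, and writing $c_p:=a\sigma(g,g^{-1}p)$, this reduces to estimating in $H_\psi$ the quantity
\[\Big\|\sum_p\overline{\eta_k(p)}\eta_k(g^{-1}p)\big([\alpha_p\Phi_j\alpha_p^{-1}(c_p)]_\psi-[c_p]_\psi\big)\Big\| + \Big|\sum_p\overline{\eta_k(p)}\eta_k(g^{-1}p)-1\Big|\,\|a\| .\]
The second term tends to $0$ along $k$. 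The first term is a problem because the sum runs over infinitely many $p$ with different twisted conjugates $\alpha_p\Phi_j\alpha_p^{-1}$, so $\Phi_j\to\mathrm{id}$ is not uniform in $p$. Using the unitary $R_3$ induced by $\alpha_p$, the first term is bounded by $\sum_p|\eta_k(p)\eta_k(g^{-1}p)|\,\|(S_{\Phi_j}-I)[\alpha_p^{-1}(c_p)]_\psi\|$.

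To handle this, I will order the index set so that $j$ is chosen after $k$. For fixed $k$, the elements $\alpha_p^{-1}(c_p)$ with $p\in\mathrm{supp}(\eta_k)$ form a finite set. Strong convergence $S_{\Phi_j}\to I$ therefore makes this term small by choosing $j$ large. Concretely, I index by triples $(F,\varepsilon,\cdot)$: for a finite set $F\subset G$, a finite set of $a$'s and $\varepsilon>0$, first pick $k$ to handle the $\eta$-error on $F$, then pick $j$ to handle the finitely many vectors. This yields a net satisfying iv) on all of $B_g$, since $B_g=B_eu_g$. Hence $(\B,\psi)$ has the Haagerup property, and Theorem \ref{HP-Fell} gives the statement for $(C_r^*(\B),\psi\circ E_e)$.
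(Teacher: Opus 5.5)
Your proposal is correct and follows essentially the same route as the paper. The paper also gets amenability from Proposition \ref{invAPamen}, builds the bundle maps via Lemma \ref{posdef2} from the Haagerup net for $(B_e,\psi)$ and scalar almost-invariant vectors (it takes $\xi_j=|F_j|^{-1/2}\chi_{F_j}1_{B_e}$ for a F\o lner net, a special case of your $\eta_k$), indexes by finite data with the F\o lner index chosen before the Haagerup index, and concludes with Theorem \ref{HP-Fell}.

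One small slip: in your displayed reduction, the factor $\sigma(g,g^{-1}p)^*$ should remain inside the sum. Your subsequent term-by-term bound is still valid, because $\psi$ is invariant under ${\rm Ad}(\sigma(g,h)^*)$, so right multiplication by these unitaries is isometric for $\|\cdot\|_\psi$.
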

\begin{proof}
The proof is an adaptation of arguments in \cite[Corollary 4.6]{mstt} (see also \cite[Corollary 3.4]{mt}).
Let $\{\Phi_i\}_{i\in I}$ be an approximating net of completely positive maps on $B_e$ that implements the Haagerup property of $(B_e,\psi)$.
Thanks to Proposition \ref{invAPamen}, 
$G$ is amenable. Hence, in Definition \ref{ap},  the net $\{\xi_j\}_{j\in J}$ can be chosen to lie in $C_c(G,\mathbb{C} 1_{B_e})$ (cf.~the proof of \cite[Theorem 20.4]{exel}). Letting $\{F_j\}_{j\in J}$ be a F\o lner net of non-empty finite subsets of $G$, we can even choose 
\[\xi_j(g):=\frac{1}{|F_j|^{1/2}}\chi_{F_j}(g) 1_{B_e}\]
for all $j\in J$ and $g\in G$. Set
\begin{align*}
    T_g^{i,j}(au_g)&=\sum_{p\in G}\xi_j(p)^*[(\alpha_p\circ\Phi_i\circ\alpha_p^{-1})(a\sigma(g, g^{-1}p))] \sigma(g, g^{-1}p)^* u_g\xi_j(g^{-1}p)\\
    &= \frac{1}{|F_j|}\sum_{p\in F_j\cap gF_j} [(\alpha_p\circ\Phi_i\circ\alpha_p^{-1})(a\sigma(g, g^{-1}p))] \sigma(g, g^{-1}p)^* u_g
\end{align*}
for all $a\in B_e$ and $ g\in G$. By Lemma \ref{posdef2},  $(T_g^{i,j})_{g\in G}$ is a positive definite $\cl B$-bundle map for all $i\in I$, $j\in J$.

Let $i\in I, j\in J$.
 As $\sigma(e,p)=1$, for each $p\in G$, we have 
$$T_e^{i,j}(a)=\frac{1}{|F_j|}\sum_{p\in F_j}(\alpha_p\circ\Phi_i\circ\alpha_{p}^{-1})(a), \quad a\in B_e.$$
As $\Phi_i$ is contractive, $T_e^{i,j}$ is contractive. Furthermore, for $a\in B_e^+$, using that $\psi$ is $\alpha$-invariant and $\psi\circ\Phi_i\leq \psi$, we get
\begin{align*}
(\psi\circ T_e^{i,j})(a)&=\psi\Big(\frac{1}{|F_j|}\sum_{p\in F_j} \alpha_p(\Phi_i(\alpha_{p}^{-1}(a)))\Big)=\frac{1}{|F_j|}\sum_{p\in F_j}\psi(\alpha_p(\Phi_i(\alpha_{p}^{-1}(a)))) \\&\leq 
\frac{1}{|F_j|}\sum_{p\in F_j}\psi(a) = \psi(a),
\end{align*}
showing that $\psi\circ T_e^{i,j} \leq \psi$. 

 Now, Lemma \ref{posdef2} gives that $\widetilde T_g^{i,j}$ is  compact  on $H_{g,\psi}$. Moreover, it follows from the proof of Lemma \ref{posdef2} that for each $g \in G$,
$$\|\widetilde{T}_g^{i,j}\| \leq \sum_{p\in G} \|\xi_j(p)\|\, \|\xi_j(g^{-1}p)\| 
= \frac{|F_j\cap g F_j|}{|F_j|},$$ 
which implies that $g \mapsto \|\widetilde{T}_g^{i,j}\| \in C_c(G)$.

Next, we observe that for each $g\in G$ and $b\in B_g$
\begin{align*}
    \|T_g^{i,j}(b)-b\|_{ g,\psi}
    &=\Big\|\frac{1}{|F_j|}\sum_{p\in F_j\cap gF_j}(\alpha_p\circ\Phi_i\circ\alpha_{p}^{-1})(bu_g^*\sigma(g,g^{-1}p))\sigma(g,g^{-1}p)^*u_g-b\Big\|_{g,\psi}\\
    &=\Big\|\frac{1}{|F_j|}\sum_{p\in F_j\cap gF_j}(\alpha_p\circ\Phi_i\circ\alpha_{p}^{-1})(bu_g^*\sigma(g,g^{-1}p))\sigma(g,g^{-1}p)^*-bu_g^*\Big\|_{\psi}\\
    &\leq \Big\|\frac{1}{|F_j|}\sum_{p\in F_j\cap gF_j}(\alpha_p\circ\Phi_i\circ\alpha_{p}^{-1})(bu_g^*\sigma(g,g^{-1}p))\sigma(g,g^{-1}p)^*-\frac{|F_j\cap gF_j|}{|F_j|}bu_g^*\Big\|_{\psi}\\
    &\ \ \ \  +\Big\|\frac{|F_j\cap gF_j|}{|F_j|}bu_g^*-bu_g^*\Big\|_{\psi}\\
    &=\Big\|\frac{1}{|F_j|}\sum_{p\in F_j\cap gF_j}\Big((\alpha_p\circ\Phi_i\circ\alpha_{p}^{-1})(bu_g^*\sigma(g,g^{-1}p))-bu_g^*\sigma(g,g^{-1}p)\Big)\sigma(g,g^{-1}p)^*\Big\|_{\psi}\\
    &\ \ \ \  + \Big|\frac{|F_j\cap gF_j|}{|F_j|} -1\Big| \, \|bu_g^*\|_{\psi}\\
    &\leq \frac{1}{|F_j|}\sum_{p\in F_j\cap gF_j}\|(\alpha_p\circ\Phi_i\circ\alpha_{p^{-1}})(bu_g^*\sigma(g,g^{-1}p))-bu_g^*\sigma(g,g^{-1}p)\|_\psi\\
    &\ \ \ \  + \Big|\frac{|F_j\cap gF_j|}{|F_j|} -1\Big| \, \|bu_g^*\|_{\psi}\\
    &
    \leq \frac{1}{|F_j|}\sum_{p\in F_j\cap gF_j} \|(\Phi_i(\alpha_{p}^{-1}(bu_g^*\sigma(g,g^{-1}p)))-\alpha_{p}^{-1}(bu_g^*\sigma(g,g^{-1}p))\|_\psi
    \\
   &\ \ \ \  + \Big|\frac{|F_j\cap gF_j|}{|F_j|} -1\Big| \, \|bu_g^*\|_{\psi}.
\end{align*}

Consider now a finite subset $\mathcal{F}$ of $B_e$, a finite subset $\mathcal{G}$ of $G$ and $\varepsilon>0$. Since $\lim_j\frac{|F_j\cap gF_j|}{|F_j|} = 1$, we can pick $j_0 = j_0(\mathcal{F}, \varepsilon)\in J$ such that 
\[ \Big|\frac{|F_{j_0}\cap gF_{j_0}|}{|F_{j_0}|} -1\Big| \, \|a\|_\psi < \frac{\varepsilon}{2}\]
for all $a \in \mathcal{F}$.

Further, we can pick $i_0 =i_0(\mathcal{F}, \mathcal{G}, \varepsilon)\in I$ such that 
\[ \|(\Phi_{i_0}(\alpha_{p^{-1}}(a\sigma(g,g^{-1}p)))-\alpha_{p^{-1}}(a\sigma(g,g^{-1}p))\|_\psi < \frac{\varepsilon}{2}\]
for all $p \in F_{j_0}$, $g\in \mathcal{G}$ and  $a \in \mathcal{F}$. Then our observation above gives that
\begin{align*}
    \|T_g^{i_0,j_0}(au_g)-au_g\|_{g,\psi} \, &\leq \frac{1}{|F_{j_0}|}\sum_{p\in F_{j_0}\cap gF_{j_0}} \|(\Phi_i(\alpha_{p}^{-1}(a\sigma(g,g^{-1}p)))-\alpha_{p}^{-1}(a\sigma(g,g^{-1}p))\|_\psi\\
    & \ \ \ \  + \Big|\frac{|F_{j_0}\cap gF_{j_0}|}{|F_{j_0}|} -1\Big| \, \|a\|_{\psi}\\
    & \, < \frac{1}{|F_{j_0}|}|F_{j_0}\cap gF_{j_0}|\cdot \varepsilon/2 + \varepsilon/2 \, \leq \, \varepsilon
 \end{align*} 
for all $a\in \F_e$ and $g\in \mathcal{G}$. Set $T^{(\mathcal{F}, \mathcal{G}, \varepsilon)} := (T^{i_0,j_0}_g)_{g\in G}$.

If $\mathcal{F}'$ is a finite subset of $B_e$, $\mathcal{G}'$ is a finite subset of $G$ and $\varepsilon' >0$, set $(\mathcal{F}, \mathcal{G}, \varepsilon)\leq (\mathcal{F}', \mathcal{G}', \varepsilon')$ whenever $\varepsilon\geq \varepsilon'$, $\mathcal{F}\subseteq \mathcal{F}'$ and $\mathcal{G}\subseteq \mathcal{G}'$.
We can then consider $\big(T^{(\mathcal{F}, \mathcal{G}, \varepsilon)}\big)_{(\mathcal{F}, \mathcal{G}, \varepsilon)}$ as a net of positive definite $\B$-bundle maps. 

Let $g\in G$ and $b \in B_g$.  Then we have
\[\|T_g^{(\mathcal F, \mathcal{G}, \varepsilon)}(b)-b\|_{g,\psi}\ \underset{(\mathcal F, \mathcal{G}, \varepsilon)}{\longrightarrow} 0.\] 
Indeed, let $\varepsilon >0$ and write $b=au_g$ with $a\in B_e$. Set $\mathcal{F} := \{a\}$, $\mathcal{G}:=\{g\}$ and assume $(\mathcal{F}', \mathcal{G}', \varepsilon')\geq (\mathcal{F}, \mathcal{G}, \varepsilon)$. Then
\[\|T_g^{(\mathcal{F}', \mathcal{G}', \varepsilon')}(au_g)-au_g\|_{g,\psi}< \varepsilon'\leq\varepsilon.\]

It is now clear that the net $\big(T^{(\mathcal{F}, \mathcal{G}, \varepsilon)}\big)_{(\mathcal{F}, \mathcal{G}, \varepsilon)}$ satisfies all the properties needed to ensure that $(\B, \psi)$ has the Haagerup property. It follows from Theorem \ref{HP-Fell} that $(C_r^*(\B), \psi\circ E_e)$ has the Haagerup property.
\end{proof}
\begin{corollary}
  Let $\cl B$ be a unital crossed product bundle over $G$ with unitary section $u$ and associated twisted action $(\alpha, \sigma)$ of $G$ on $B_e$. 
  
 Assume that $G$ is amenable and let $\psi$ be an  $\alpha$-invariant state on $B_e$. Then 
 $(B_e,\psi)$ has the Haagerup property if and only if $(\B, \psi)$ has the Haagerup property if and only if $(C_r^*(\B), \psi\circ E_e)$ has the Haagerup property.  
\end{corollary}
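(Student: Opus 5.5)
We will close the cycle of implications (1)$\Rightarrow$(2)$\Rightarrow$(3)$\Rightarrow$(1), where (1) is the Haagerup property for $(B_e,\psi)$, (2) the one for $(\B,\psi)$ and (3) the one for $(C_r^*(\B),\psi\circ E_e)$. The implication (2)$\Leftrightarrow$(3) is exactly Theorem \ref{HP-Fell}, and it needs no hypothesis on $G$ or $\psi$. The implication (2)$\Rightarrow$(1) is Proposition \ref{Haag-descend}: the maps $T^i_e$ implement the Haagerup property of $(B_e,\psi)$.

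It remains to show (1)$\Rightarrow$(2), and this is where amenability enters. Theorem \ref{crprod-bundle} requires $\B$ to be $C^*$-amenable, so we first check this. Since $G$ is amenable, Exel's approximation property holds for every Fell bundle over $G$ by \cite[Theorem 20.4]{exel}. Concretely, take $\xi_j=|F_j|^{-1/2}\chi_{F_j}1_{B_e}$ for a F\o lner net $\{F_j\}$. Then $\sum_h\xi_j(h)^*\xi_j(h)=1_{B_e}$, and
\[
\sum_h \xi_j(gh)^*b\,\xi_j(h)=\frac{|g^{-1}F_j\cap F_j|}{|F_j|}\,b\to b .
\]
Hence $\B$ is $C^*$-amenable in the sense adopted after Definition \ref{ap}. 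With $\psi$ an $\alpha$-invariant state such that $(B_e,\psi)$ has the Haagerup property, Theorem \ref{crprod-bundle} then gives that $(\B,\psi)$ and $(C_r^*(\B),\psi\circ E_e)$ have the Haagerup property.

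There is no real obstacle in this argument. The only point needing care is the appeal to Exel's theorem, which establishes $C^*$-amenability. If one prefers, one can bypass it: the proof of Theorem \ref{crprod-bundle} uses $C^*$-amenability only to deduce that $G$ is amenable, via Proposition \ref{invAPamen}, and here amenability of $G$ is given directly.
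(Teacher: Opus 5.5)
Your proposal is correct and matches the paper's proof, which simply combines Theorem \ref{HP-Fell}, Proposition \ref{Haag-descend} and Theorem \ref{crprod-bundle}. Your explicit F\o lner-net check of Exel's AP only spells out the step the paper leaves implicit: amenability of $G$ makes $\B$ $C^*$-amenable, so Theorem \ref{crprod-bundle} applies.
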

\begin{proof}
    Follows by combining Theorem \ref{HP-Fell}, Proposition \ref{Haag-descend} and Theorem \ref{crprod-bundle}.
\end{proof}
\begin{remark}
    If one removes the assumption that the state $\psi$ on $B_e$ is $\alpha$-invariant from the assumptions in Theorem \ref{crprod-bundle}, then one can not conclude that $G$ is amenable
    and, even if $G$ is assumed to be amenable from the start, it is not possible proceed in the same way to reach the conclusion.
    It would be interesting to know whether  it is still true for unital crossed product bundles that $(\mathcal B, \psi)$ has the Haagerup property (or, equivalently, that
    $(C_r^*(\B), \psi\circ E_e)$ has the Haagerup property) if one only assumes that $\B$ is $C^*$-amenable and $(B_e, \psi)$ has the Haagerup property. (See Example \ref{CAR-2}
    where all these properties hold and $\psi$ is not $\alpha$-invariant.) 
     Note that this problem makes sense for a general Fell bundle $\B$, where $\alpha$-invariance has no meaning.
\end{remark} 
\begin{remark}
In \cite{gm22}, Gao and Meng introduce a notion of quasi-amenability for an action $\alpha$ of $G$ on a unital $C^*$-algebra $A$ with respect to some $\alpha$-invariant $\tau \in S(A)$. If $\Sigma=(A, G, \alpha)$ denotes the associated system and $E$ is the canonical conditional expectation from $C_r^*(\Sigma)$ onto $A$, then they show in \cite[Theorem 3.5]{gm22} that if $\alpha$ is a quasi-amenable action of $G$ on $A$ w.r.t.~some $\alpha$-invariant $\tau\in S(A)$, then $(A, \tau)$ has the Haagerup property if and only if $(C_r^*(\Sigma), \tau\circ E)$
has it too. It is clear that quasi-amenability of $\alpha$ is a stronger notion than the amenability of $\alpha$ as defined in \cite[Definition 4.3.1]{bo}, which itself implies that the associated Fell bundle $\B_\Sigma$ has Exel's AP, hence is $C^*$-amenable. We note that it therefore follows from Proposition \ref{invAPamen}
that $G$ has to be amenable whenever $\alpha$ is  quasi-amenable w.r.t.~some invariant state on $A$.
Thus, \cite[Theorem 3.5]{gm22} actually only covers the case where $G$ is assumed to be amenable.
\end{remark}

\section{Haagerup PD-approximation property for Fell bundles}
We introduce a variant of the PD-approximation property for Fell bundles taking inspiration from the difference between amenability and the Haagerup property for groups.

 \begin{definition} \label{HPD} A Fell bundle $\B=(B_g) _{g\in G} $ over a  discrete group $G$ is said to   have the \emph{Haagerup PD-approximation property} if there exists a net $\{T^i\}_{i\in I}$ of $\B$-bundle maps satisfying the following properties:
 \begin{itemize}
 \item[(i)] For each $i \in I$, $T^i$ is positive definite  and $g \mapsto \|T^i_g\| \in C_0(G)$;
 \item[(ii)]  $\{T^i\}_{i\in I}$ is uniformly bounded in the sense that $\sup_{i}\|T^i_e\| < \infty$;
 \item[(iii)] $\lim_i \|T^i_g(b) - b\| = 0$ for every $g \in G$ and $b\in B_g$. 
 \end{itemize} 
\end{definition}
Note that $\B$ has the Haagerup PD-approximation property whenever $G$ has the Haagerup property. 
Indeed, if $\{\phi_i\}_{i\in I}$ is  a net of positive definite functions on $G$ giving the Haagerup property for $G$, then the associated net $\{T^{\phi_i}\}_{i\in I}$ of positive definite $\B$-bundle maps defined in \cite[Example 4.4]{bc25} is readily seen to give the Haagerup PD-approximation property for $\B$. Note also that $\B$ has the Haagerup PD-approximation property whenever $\B$ 
is  C*-amenable (since the PD-approximation property, cf.~Definition \ref{PD}, is clearly a stronger property). 
As we will see below, the converse implication does not necessarily hold (see  the discussion after Example \ref{example}).

Let us now verify that Definition \ref{HPD} is compatible with the definition of the Haagerup property for $\Sigma$.
 
  \begin{proposition} \label{SigmaFell}
 Assume  $\Sigma = (A,G,\alpha)$ is a unital discrete $C^*$-dynamical system having the Haagerup property (in the sense of Dong and Ruan). Then the associated Fell bundle $\B_\Sigma$  has the Haagerup PD-approximation property.
\end{proposition}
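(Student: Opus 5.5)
Given a net $\{\varphi_i\}_{i\in I}$ of normalized $\alpha$-positive definite functions in $C_0(G,Z(A))$ with $\|\varphi_i(g)-1_A\|\to 0$ for every $g$, I will define $\B_\Sigma$-bundle maps by multiplication with the central values:
\[
T^i_g\big((a,g)\big) := \big(a\,\varphi_i(g),\, g\big) \qquad (a\in A,\ g\in G).
\]
I would then check the three conditions of Definition \ref{HPD}.

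First, positive definiteness. Here I plan to use the correspondence between $\Sigma$-multipliers and $\B_\Sigma$-bundle maps recalled in Example \ref{ex:haag} c) (cf.~\cite[Example 4.6]{bc25}). I would also compute directly. Take $a_k\in A$, $g_k\in G$ and elements $(a_k,g_k)$ and $(c_k,g_k)$ of $B_{g_k}$. Then $(a_k,g_k)^*(a_l,g_l) = (x_{kl}, g_k^{-1}g_l)$ with $x_{kl}=\alpha_{g_k}^{-1}(a_k^*a_l)\cdot(\text{unitary twist})$. Multiplying by $\varphi_i(g_k^{-1}g_l)$ on the right and conjugating back by $(c_k,g_k)$ and $(c_l,g_l)^*$ gives the following. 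Because $\varphi_i$ takes values in the center, we have $\alpha_{g_k}(\varphi_i(g_k^{-1}g_l))$ central. The sum $\sum_{k,l}(c_k,g_k)T_{g_k^{-1}g_l}(\cdots)(c_l,g_l)^*$ then becomes
\[
\sum_{k,l} c_k\, a_k^* a_l\, \alpha_{g_k}\big(\varphi_i(g_k^{-1}g_l)\big)\, c_l^* .
\]
Write $y_k:=a_kc_k^*$ (I would use centrality to move factors around), so this is $\sum_{k,l} y_k^* y_l\, z_{kl}$ with $[z_{kl}]=[\alpha_{g_k}(\varphi_i(g_k^{-1}g_l))]\in M_n(Z(A))^+$. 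Positivity of such a sum is standard: take a square root $[z_{kl}]=[w_{mk}]^*[w_{ml}]$ with central entries, so the sum equals $\sum_m \big(\sum_k y_k w_{mk}\big)^*\big(\sum_l y_l w_{ml}\big)\ge 0$. The only care needed is the bookkeeping of $\alpha_g$ versus $\alpha_g^{-1}$ in the convention of \cite{Claire}. I would settle this by taking the matrix form $[T_{g_k^{-1}g_l}(a_k^*a_l)]\ge 0$ in $M_{\mathbf g}(\B_\Sigma)$ from \cite[Remark 3.4]{bc25}, which avoids the outer factors $c_k$. This step is the main (though modest) obstacle.

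The remaining conditions are immediate. For each $g$ we have $\|T^i_g\|\le\|\varphi_i(g)\|$, and in fact equality holds by evaluating at $(1_A,g)$. So $g\mapsto\|T^i_g\|$ lies in $C_0(G)$ because $\varphi_i\in C_0(G,Z(A))$. Next, $\|T^i_e\|=\|\varphi_i(e)\|=\|1_A\|=1$, which gives uniform boundedness. Finally, $\|T^i_g((a,g))-(a,g)\| = \|a(\varphi_i(g)-1_A)\|\le\|a\|\,\|\varphi_i(g)-1_A\|\to 0$. Hence $\{T^i\}_{i\in I}$ implements the Haagerup PD-approximation property for $\B_\Sigma$.
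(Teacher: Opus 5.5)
Your proposal is correct and follows the paper's proof. The paper uses the same maps $T^i_g((a,g))=(\varphi_i(g)a,g)$, which agree with yours by centrality, gets positive definiteness from the correspondence with $\Sigma$-positive definite multipliers (\cite[Proposition 4.24]{bc16}, \cite[Example 4.6]{bc25}), and checks $\|T^i_g\|=\|\varphi_i(g)\|$, $\|T^i_e\|=1$ and pointwise convergence exactly as you do. Your direct positivity check is also sound: for the untwisted bundle the sum reduces to $\sum_{k,l} y_k^*y_l\,\alpha_{g_k}(\varphi_i(g_k^{-1}g_l))$, whose coefficient matrix is exactly the one the paper's convention requires to be positive, so the $\alpha_g$ versus $\alpha_g^{-1}$ bookkeeping you were worried about causes no problem.
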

\begin{proof} Let $\{\varphi_i\}_{i\in I}$ is  a net of normalized  $\alpha$-positive definite functions in $C_0(G,Z(A))$ such that $\lim_{i} \|\varphi_i(g)-1_A\| = 0$ for every $g\in G$.
For each $i\in I$, let $\tilde T^{i}$ denote the associated $\Sigma$-positive definite multiplier of $\Sigma$ given by $\tilde T^{i}(a, g) = \varphi_i(g)a$ for all $(a,g) \in A\times G$, cf.~\cite[Proposition 4.24]{bc16}. Then $T^i=(T^i_g)_{g\in G}$, defined by \[T^i_g((a,g))=(\tilde T^{i}(a, g), g) = (\varphi_i(g)a, g)\]
for all $a \in A$ and $g\in G$, is a positive definite $\B_\Sigma$-bundle map, cf.\cite[Example 4.6]{bc25}.

Moreover, $\|T^i_g\| = \|\varphi_i(g)\|$. Thus, since $\varphi_i\in C_0(G, Z(A))$, we get that $g\mapsto\|T^i_g\|$ lies in $C_0(G)$.  Since $\varphi_i(e) = 1_A$, we also have 
$\|T^i_e\| = \|{\rm id}_A\| = 1$. Hence, $\sup_{i\in I} \|T^i_e\| = 1$.

Finally, for $a\in A$ and $g\in G$, we have that
\[ \lim_{i}\|T^i_g(a, g) - (a, g)\| = \lim_{i}\|(\varphi_i(g)a -a, g)\| = \lim_{i}\|\varphi_i(g)a -a\| = 0.\]
This shows that $\{T^i\}_{i\in I}$ is a net having the desired properties. 
\end{proof}

\medskip One may wonder if it is possible to exhibit a Fell bundle $\B=(B_g)_{g \in G}$ with the Haagerup PD-approximation property such that $G$ does not have the Haagerup property. 
Inspecting the proof of Proposition \ref{HP-group} we arrive at the following partial answer.
\begin{proposition}\label{HPD-Hgroup}
    Let $\B$ be a unital crossed product bundle over $G$,  with unitary section $u$ and associated twisted action $(\alpha, \sigma)$ of $G$ on $B_e$ such that $\sigma$ is scalar-valued. 
    Assume $\B$ has the Haagerup PD-approximation property and there is an  $\alpha$-invariant state $\psi$ on $B_e$.
    Then $G$ has the Haagerup property.
\end{proposition}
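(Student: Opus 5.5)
Mimicking the proof of Proposition \ref{HP-group}, let $\{T^i\}_{i\in I}$ be a net of positive definite $\B$-bundle maps implementing the Haagerup PD-approximation property of $\B$. For each $i\in I$ we define $\varphi_i:G\to\mathbb{C}$ by
\[\varphi_i(g)=\psi\big(T^i_g(u(g))u(g)^*\big),\qquad g\in G.\]
The plan is to show that a suitable rescaling of $\{\varphi_i\}_{i\in I}$ witnesses the Haagerup property of $G$.

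\emph{Positive definiteness.} This part of the proof of Proposition \ref{HP-group} used only that $T^i$ is positive definite, that $\psi$ is an $\alpha$-invariant state and that $\sigma$ is scalar-valued. Compactness with respect to $\psi$ and the normalization of $T^i_e$ played no role. So we repeat the argument verbatim. Positivity of $\big[u(g_k)T^i_{g_k^{-1}g_l}(u(g_k)^*u(g_l))u(g_l)^*\big]_{k,l}$ in $M_n(B_e)$ and complete positivity of $\psi$ give a positive scalar matrix. Then invariance of $\psi$ under $\mathrm{Ad}(u(g))$, together with the scalar cocycle identity $u(g_k)^*u(g_l)=\overline{\sigma(g_k,g_k^{-1}g_l)}\,u(g_k^{-1}g_l)$, identifies its entries with $\varphi_i(g_k^{-1}g_l)$. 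Hence each $\varphi_i$ is positive definite.

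\emph{Vanishing at infinity and convergence.} Here we use the operator norms directly instead of the $\widetilde{T_g}$'s. Since $\psi$ is a state and $u(g)$ is unitary,
\[|\varphi_i(g)|\le \|T^i_g(u(g))u(g)^*\|\le \|T^i_g\|,\]
and $g\mapsto\|T^i_g\|$ lies in $C_0(G)$ by condition (i) of Definition \ref{HPD}, so $\varphi_i\in C_0(G)$. Similarly,
\[|\varphi_i(g)-1|=\big|\psi\big((T^i_g(u(g))-u(g))u(g)^*\big)\big|\le\|T^i_g(u(g))-u(g)\|\to_i 0\]
by condition (iii). This step is easier than in Proposition \ref{HP-group}, since no passage to $g^{-1}$ is needed.

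\emph{Normalization.} This is the only point needing care, because Definition \ref{HPD} does not require $T^i_e$ to be unital. We have $\varphi_i(e)=\psi(T^i_e(1))\ge0$ and $\varphi_i(e)\to 1$. So there is $i_0$ such that $\varphi_i(e)>0$ for $i\ge i_0$. On this cofinal subnet set $\phi_i:=\varphi_i/\varphi_i(e)$. Then each $\phi_i$ is a normalized positive definite function in $C_0(G)$, and $\phi_i(g)\to 1$ pointwise since both numerator and denominator converge to $1$. Thus $G$ has the Haagerup property.

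The main obstacle I anticipate is just checking that the positive-definiteness computation in Proposition \ref{HP-group} really did not use the extra hypotheses of Definition \ref{def:haag_bundle}. By inspection it did not: it used only positive definiteness of $T^i$, invariance of $\psi$ and the scalar twist. Uniform boundedness (ii) is not even needed.
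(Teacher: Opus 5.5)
Your proposal is correct and follows the paper's proof. The paper likewise sets $\varphi_i(g)=\psi(T^i_g(u_g)u_g^*)$ and says to argue as in Proposition~\ref{HP-group}; your operator-norm bounds $|\varphi_i(g)|\le\|T^i_g\|$ and $|\varphi_i(g)-1|\le\|T^i_g(u_g)-u_g\|$ are exactly what is needed there in place of the bounds via $\widetilde{T^i_{g^{-1}}}$, and your renormalization by $\varphi_i(e)$ on a tail of the net fills in a detail the paper leaves implicit.
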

 \begin{proof} Let $\{T^i\}_{i\in I}$ be a net implementing the Haagerup PD-approximation property for $\B$ and set $\varphi_i(g):=\psi(T^i_g(u_g)u_g^*)$ for all $g\in G$ and $i\in I$. Then we can argue in a similar way as in the proof of Proposition \ref{HP-group}.
\end{proof}
\begin{remark} As in Remark \ref{weak-inv}, we only need to assume in the statement of Proposition \ref{HPD-Hgroup} that the state $\psi$ on $B_e$ satisfies \[\psi(u_hT^i_g(u_g)u_g^*u_h^*) =\psi(T^i_g(u_g)u_g^*)\] 
for all $i\in I$ and  $g, h \in G$. Of course, this condition holds (for all the states $\psi$ on $B_e$) whenever $T^i_g(u_g)u_g^*$ lies in $B_e^\alpha$, the fixed-point algebra of $B_e$ under $\alpha$, for every $i\in I$ and $g\in G$.
    \end{remark}
 
\begin{example} \label{example} 
We give here an example of a unital discrete $C^*$-dynamical system $\Sigma=(A, G, \alpha)$ with a faithful tracial $\alpha$-invariant state $\tau$ on $A$ such that 
\begin{itemize}
\item $\B_\Sigma$ has the Haagerup PD-approximation property,
\item $(\B_\Sigma, \tau)$ does not have the Haagerup property,
\item $\B_\Sigma$ is not $C^*$-amenable,
\item $(A, \tau)$ has the Haagerup property.
 \end{itemize}
 
As in \cite[Remark 3.10]{mstt}, we consider the natural action of $SL(2, \mathbb{Z})$ by automorphisms on $\mathbb{Z}^2$  and let $\alpha$ be the induced action of $SL(2, \mathbb{Z})$ on $A:=C_r^*(\mathbb{Z}^2)$. Let $\tau$ be the canonical tracial state on $C_r^*(\mathbb{Z}^2)$. It is well-known (and easy to check) that $\tau$ is $\alpha$-invariant. Since $SL(2, \mathbb{Z})$ has the Haagerup property, the Fell bundle $\B_\Sigma$ associated to the system $\Sigma:=(A, SL(2, \mathbb{Z}), \alpha)$ has the Haagerup PD-approximation property. Moreover, $(\B_\Sigma, \tau)$ does not have the Haagerup property. 
Indeed, as the semi-direct product  $\mathbb{Z}^2 \rtimes SL(2, \mathbb{Z})$ does not have the Haagerup property (cf.~\cite{ccjjv}), the pair $(C_r^*(\mathbb{Z}^2 \rtimes SL(2, \mathbb{Z})), \tau')$ does not have the Haagerup property (cf.~\cite[Theorem 2.6]{dong}), where $\tau'$ denotes the canonical tracial state on $C_r^*(\mathbb{Z}^2 \rtimes SL(2, \mathbb{Z}))$. Letting $E: C_r^*(\Sigma)\to A$ denote the canonical conditional expectation, we can identify $(C_r^*(\mathbb{Z}^2 \rtimes SL(2, \mathbb{Z})), \tau')$ with $(C_r^*(\Sigma), \tau\circ E)$, and obtain that $(C_r^*(\Sigma), \tau\circ E)$ does not have the Haagerup property. 
By \cite[Theorem 3.8]{mstt} this implies that $(\Sigma, \tau)$, hence $(\B_\Sigma, \tau)$, does not have the Haagerup property. 
Further, $\B_\Sigma$ is not $C^*$-amenable. Indeed, assume on the contrary that $\B_\Sigma$ is $C^*$-amenable. 
Then, since $A=C_r^*(\mathbb{Z}^2)$ is nuclear, $C_r^*(\B_\Sigma)\simeq C_r^*(\mathbb{Z}^2\rtimes SL(2,\mathbb{Z}))$ is nuclear too (cf.~\cite[Theorem 5.7]{bc25}, \cite[Theorem 3.20]{bf25}). By Lance's result (see for example \cite[Theorem 2.6.8]{bo}), this implies that $\mathbb{Z}^2\rtimes SL(2, \mathbb{Z})$ is amenable, which is not the case.
Finally, $(A, \tau)$ has the Haagerup property since $\mathbb{Z}^2$ has the Haagerup property.
\end{example}

\medskip
In \cite[Section 2]{dong_ruan} the authors  defined the notion of Hilbert $A$-module Haagerup property w.r.t.~$E$ for a triple $(C, A, E)$, where $A \subset C$ are unital $C^*$-algebras and $ E: C\to A$ is a faithful conditional expectation onto $A$. (See also \cite[Definition 5.12]{kls} for a definition covering the non-unital case, which is consistent with Dong and Ruan's definition 
for unital inclusions when $E$ is tracial, cf.~\cite[Proposition 5.16]{kls}).

Let $\cl B=(B_g)_{g\in G}$ be a unital Fell bundle. Knowing that $E_e:C_r^*(\cl B)\to B_e$ is a faithful conditional expectation onto $E_e$, we can consider the $B_e$-module Haagerup property w.r.t.~$E_e$ for the triple $(C_r^*(\cl B),B_e,E_e)$ and see how it is related to the Haagerup PD-approximation property for $\cl B$.
We recall first some facts necessary to introduce this notion. On $C_r^*(\cl B)$ define a $B_e$-valued inner product by 
\[\langle x,y\rangle_{E_e}=E_e(x^*y), x,y\in C_r^*(\cl B)\] and norm $\|x\|_{E_e}=\|\langle x,x\rangle_{E_e}\|^{1/2} = E_e(x^*x)^{1/2} \leq \|x\|_r$. It induces a pre-Hilbert right $B_e$-module structure on $C_r^*(\cl B)$. Write $\cl H_{\cl B}$ for the right Hilbert $B_e$-module completion.
Let $\Phi: C_r^*(\cl B)\to C_r^*(\cl B)$ be a completely positive map. Then by the  Schwarz inequality,
\[\Phi(x)^*\Phi(x)\leq \|\Phi\|\, \Phi(x^*x), \, x\in C_r^*(\cl B).\]
If in addition $E_e\circ \Phi\leq E_e$, we obtain
\[\|\Phi(x)\|_{E_e}^2=E_e(\Phi(x)^*\Phi(x))\leq \|\Phi\|\, E_e(\Phi(x^*x))\leq \|\Phi\|\,E_e(x^*x)= \|\Phi\|\,\|x\|_{E_e}^2.\]
Hence $\Phi$ determines a bounded map $\tilde\Phi$ on $\cl H_{\cl B}$. 

Fix $\xi$, $\eta\in \cl H_{\cl B}\setminus \{0\}$ and define a rank one map $\theta_{\xi,\eta}: \cl H_{\cl B}\to \cl H_{\cl B}$ by letting
$$\theta_{\xi,\eta}(x)=\xi\langle \eta,x\rangle_{E_e}, x\in \cl H_{\cl B}.$$
Clearly, $\theta_{\xi,\eta}$ is a right $B_e$-module map. A finite rank map on $\cl H_{\cl B}$ is defined to be a finite sum of such maps,
while  a compact map on $\cl H_{\cl B}$ is a bounded map which is a norm limit of finite rank maps. 
We have that any compact map on $\cl H_{\cl B}$ is a right $B_e$-module map.

Inspired by Dong and Ruan's notion of Hilbert module Haagerup property, we propose the following analogous concept. 
\begin{definition}\label{def_module_HP}
We say that $C_r^*(\cl B)$ has the {\it Hilbert $B_e$-module Haagerup property} with respect to $E_e$ if there exists a net of completely positive  $B_e$-bimodule maps $\{\Phi_i\}_{i\in I}$ on $C_r^*(\cl B)$ such that
\begin{enumerate}
    \item[a)] $E_e\circ \Phi_i\leq E_e$ for every $i\in I$;
    \item[b)] $\sup_{i\in I}\|\tilde \Phi_i\|<\infty$;
    \item[c)] $\tilde\Phi_i$ is a compact map on $\cl H_{\cl B}$ for every $i\in I$;
    \item[d)] $\|\tilde\Phi_i(x)-x\|_{E_e}\to_i 0$ for all $x\in C_r^*(\cl B)$.
\end{enumerate}
\end{definition}

\begin{example} \label{DR-Hp} Let $\Sigma=(A, G, \alpha)$ be a unital discrete $C^*$-dynamical system having the Haagerup property (in the sense of Dong and Ruan). Then $C_r^*(\Sigma)$ has the Hilbert $A$-module Haagerup property w.r.t.~$E$, where $E:C_r^*(\Sigma)\to A$ denotes the canonical conditional expectation, cf.~\cite[Theorem 3.6]{dong_ruan}. It is easy to see that this implies that $C_r^*(\B_\Sigma)$ has the Hilbert $B_e$-module Haagerup property w.r.t.~$E_e$.
\end{example}
\begin{theorem} \label{moduleHaag} Let $\cl B=(B_g)_{g\in G}$ be a unital Fell bundle over a discrete group $G$.  Assume that $C_r^*(\cl B)$ has the Hilbert $B_e$-module Haagerup property with respect to $E_e$. Then $\cl B$ has the Haagerup PD-approximation property.
\end{theorem}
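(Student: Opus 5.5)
The plan is to produce the bundle maps directly from the net $\{\Phi_i\}_{i\in I}$ in Definition \ref{def_module_HP}, by taking Fourier coefficients exactly as in the converse part of the proof of Theorem \ref{HP-Fell}. For $i\in I$, $g\in G$ and $b\in B_g$, set
\[ T^i_g(b):=E_g\big(\Phi_i(\lambda^\B_g(b))\big). \]
By \cite[Proposition 3.8]{bc25}, each $T^i=(T^i_g)_{g\in G}$ is a positive definite $\B$-bundle map. It then remains to check conditions (ii), (iii) and the $C_0$-condition in (i) of Definition \ref{HPD}.

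For (ii), note that $T^i_e=E_e\circ\Phi_i|_{B_e}$ is completely positive on the unital $C^*$-algebra $B_e$. Hence $\|T^i_e\|=\|T^i_e(1)\|$. Condition a) gives $T^i_e(1)=E_e(\Phi_i(1))\le E_e(1)=1$, so $\sup_i\|T^i_e\|\le 1$.

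For (iii) and (i) we need a basic estimate linking the Fourier coefficients to the $E_e$-norm:
\[ \|E_g(x)\|\le \|x\|_{E_e} \quad\text{for } x\in C_r^*(\B),\ g\in G. \]
To prove it, view $x$ as acting on $\ell^2(\B)$. Then $E_g(x)$ is the $g$-component of $x$ applied to the canonical copy of $B_e$ in the $e$-slot. Concretely, $E_g(x)^*E_g(x)\le\sum_h E_h(x)^*E_h(x)=E_e(x^*x)$, first for $x\in C_c(\B)$ and then by continuity. The estimate extends to all of $\cl H_{\cl B}$, so each $E_g$ induces a contraction $\cl H_{\cl B}\to B_g$. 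Given this, (iii) follows at once:
\[ \|T^i_g(b)-b\|=\big\|E_g\big(\tilde\Phi_i(\lambda_g^\B(b))-\lambda^\B_g(b)\big)\big\|\le \|\tilde\Phi_i(\lambda_g^\B(b))-\lambda^\B_g(b)\|_{E_e}\to_i 0 \]
by condition d).

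The main obstacle is showing that $g\mapsto\|T^i_g\|$ lies in $C_0(G)$. Fix $i$ and $\varepsilon>0$. By condition c), choose a finite rank operator $F=\sum_{k=1}^m\theta_{\xi_k,\eta_k}$ with $\|\tilde\Phi_i-F\|<\varepsilon/2$. By density of $C_c(\B)$ in $\cl H_{\cl B}$ and the bound $\|\theta_{\xi,\eta}\|\le\|\xi\|\|\eta\|$, we may perturb to assume all $\xi_k,\eta_k\in C_c(\B)$ and $\|\tilde\Phi_i-F\|<\varepsilon$. For $b\in B_g$ we have $E_e(\eta^*\lambda_g^\B(b))=E_g(\eta)^*b$, which gives
\[ E_g\big(F(\lambda_g^\B(b))\big)=\sum_k E_g(\xi_k)\,E_g(\eta_k)^*\,b. \]
This vanishes when $g$ lies outside the finite set $\bigcup_k{\rm supp}(\xi_k)\cap{\rm supp}(\eta_k)$. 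For $g$ outside that set, the contraction estimate above then gives
\[ \|T^i_g(b)\|\le\|(\tilde\Phi_i-F)(\lambda_g^\B(b))\|_{E_e}\le\varepsilon\|\lambda^\B_g(b)\|_{E_e}\le\varepsilon\|b\|. \]
Hence $\|T^i_g\|\le\varepsilon$ off a finite set, which is the required vanishing at infinity. The delicate points here are the identification of $E_g$ on the module completion and the density step replacing $\xi_k,\eta_k$ by finitely supported sections; both should be routine once the contraction estimate is in place.
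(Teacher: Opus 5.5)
Your proposal is correct and follows essentially the same route as the paper: the same Fourier-coefficient maps $T^i_g=E_g\circ\Phi_i\circ\lambda^\B_g$, the same contraction estimate $\|E_g(x)\|\le\|x\|_{E_e}$ (which the paper quotes from Exel's Proposition 17.15 rather than proving), and the same finite-rank approximation with finitely supported vectors to obtain $g\mapsto\|T^i_g\|\in C_0(G)$. The only minor deviation is the uniform bound: the paper uses condition b), whereas you use condition a) together with unitality ($\|T^i_e\|=\|T^i_e(1)\|\le 1$), which is valid and gives the slightly sharper bound $\sup_i\|T^i_e\|\le 1$.
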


\begin{proof}
    Let $\{\Phi_i\}_{i\in I}$  be a net of completely positive maps that implements the Hilbert $B_e$-module Haagerup property w.r.t.~$E_e$. For each $i\in I$ and $g\in G$ set
    \[T_g^i=E_g\circ\Phi_i\circ \lambda_g^\B : B_g\to B_g.\]
    By \cite[Proposition 3.8]{bc25}, $T^i=(T_g^i)_{g\in G}$ is a positive definite $\B$-bundle map for each $i\in I$, and, for each $b\in B_g$, using \cite[Proposition 17.15]{exel} and d), we get 
    \begin{align*}
    &\|T_g^i(b)-b\|=\|E_g\big(\Phi_i(\lambda_g^{\cl B}(b))-\lambda_g^{\cl B}(b)\big)\|\\&=\|E_g\big(\Phi_i(\lambda_g^{\cl B}(b))-\lambda_g^{\cl B}(b)\big)^*E_g\big(\Phi_i(\lambda_g^{\cl B}(b))-\lambda_g^{\cl B}(b)\big)\|^{1/2}\\&\leq\|E_e\big((\Phi_i(\lambda_g^{\cl B}(b))-\lambda_g^{\cl B}(b))^*(\Phi_i(\lambda_g^{\cl B}(b))-\lambda_g^{\cl B}(b))\big)\|^{1/2}\\&= \|\tilde\Phi_i(\lambda_g^{\cl B}(b))-\lambda_g^{\cl B}(b)\|_{E_e}\to_i 0.
    \end{align*} 
    Fix now $i\in I$ and $\varepsilon>0$. As $\tilde \Phi_i$ is compact on $\cl H_{\cl B}$, there is a finite rank map $R$ on $\cl H_{\cl B}$ such that $\|\tilde\Phi_i-R\|<\varepsilon$. 
    
    By density of ${\rm Span}\{\lambda_g^\B(c_g): g\in G \text{ and } c_g \in B_g\}$ in $\mathcal{H}_\B$ we may further assume that there exist $g_1, h_1 \ldots, g_m, h_m$ in $G$ such that $R$ is of the form $\sum_{j=1}^m \theta_{c_j,d_j}$ where $c_{j}\in \lambda_{g_j}^{\cl B}(B_{g_j})$, $d_j\in \lambda_{h_j}^{\cl B}(B_{h_j})$ for all $j=1,\ldots, m$. Set $F=\{h_j \ | \ j=1,\ldots, m\}$.
         
         Note that for $r\in G$ and $b\in B_r$, 
     \[ R\big(\lambda_r^{\cl B}(b)\big) = \sum_{j=1}^m\theta_{c_j,d_j}(\lambda_r^{\cl B}(b))=\sum_{j=1}^m c_j\langle d_j, \lambda_r^{\cl B}(b)\rangle_{E_e}=0 \text{ if }r\not\in F.\]
    Therefore, for $g\not\in F$, $b\in B_g$,
    \begin{align*}
    &\|T_g^i(b)\|=\|E_g(\Phi_i(\lambda_g^{\cl B}(b)))^*E_g(\Phi_i(\lambda_g^{\cl B}(b)))\|^{1/2}\\&\leq \|E_e(\Phi_i(\lambda_g^{\cl B}(b))^*\Phi_i(\lambda_g^{\cl B}(b)))\|^{1/2}=\|\tilde\Phi_i(\lambda_g^{\cl B}(b))\|_{E_e}\\&=\|\tilde\Phi_i(\lambda_g^{\cl B}(b))-R(\lambda_g^{\cl B}(b))\|_{E_e}\leq\|\tilde\Phi_i-R\|\|\lambda_g^{\cl B}(b)\|_{E_e}<\varepsilon\|b\| \ , 
    \end{align*}
    showing that for each $i\in I$, $g\mapsto \|T_g^i\|$ is in $C_0(G)$.  
    Similarly, for each $b\in B_e$,
    \[\|T_e^i(b)\|\leq \|\tilde\Phi_i(\lambda_e^{\cl B}(b))\|_{E_e}\leq\|\tilde\Phi_i\|\|b\|,\]
    giving that $T^i$ is uniformly bounded (using b)). 
\end{proof}
Observe from the proof above that each $T_g^i$, $g\in G$, $i\in I$, is a $B_e$-bimodule map. 
When this happens we will say that $T^i=(T_g^i)_{g\in G}$ is a $B_e$-bimodule $\B$-bundle map. 
Note that if $T$ is a positive definite $\B$-bundle map, then $T$ is $B_e$-bimodule if and only if it is a right $B_e$-module map (as each $T_g$ then satisfies that $T_g(b)^*= T_{g^{-1}}(b^*)$ for all $g\in G$ and $b\in B_g$ cf.~\cite{bc25}). 

\begin{remark}
By combining Example \ref{DR-Hp} and Proposition \ref{moduleHaag}, one gets another proof of Proposition \ref{SigmaFell}.
\end{remark}

As a partial converse to Proposition \ref{moduleHaag}, we have the following result. We recall (cf.~\cite{landi_pavlov}) that an orthonormal basis for a right Hilbert module $X$ over a unital $C^*$-algebra $B$ is a family $\{u_j\}_{j\in J}$ such that $\langle u_j, u_k\rangle_B = 1_B$ if $j=k$ and $0$ otherwise and
for every $x\in X$ there exist $\widehat{x}_j \in B$ for all $j\in J$ such that \[
x = \sum_{j\in J} u_j\,\widehat{x}_j \quad \text{(unconditional convergence in norm)},\]
in which case $\widehat{x}_j= \langle u_j, x\rangle_B$ for every $j\in J$, cf.~\cite[Theorem 2.10]{landi_pavlov}, and $j\mapsto \|\widehat{x}_j\| \in C_0(J)$. 

\begin{theorem}\label{orthoH}
  Assume $\B=(B_g)_{g\in G}$ is a unital Fell bundle over $G$ 
   having the Haagerup-PD approximation property  with implementing net  $\{T^i\}_{i\in I}$ satisfying that 
    each $T_g^i$ is a compact, $B_e$-module map on $B_g$ (considered as a right Hilbert $B_e$-module).
  Moreover, assume that for every $g\in G$ there exists an orthonormal basis $\{u_\gamma(g): \gamma \in J_g\}$ for $B_g$.
  Then $C_r^*(\cl B)$ has the Hilbert $B_e$-module Haagerup property with respect to $E_e$.
\end{theorem}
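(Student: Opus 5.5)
The plan is to take $\Phi_i := M_{T^i}$, the completely positive map on $C_r^*(\B)$ given by Theorem \ref{PDCP}. We then verify conditions a)–d) of Definition \ref{def_module_HP} by decomposing $\cl H_{\B}$ along the grading.

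The first step is to identify $\cl H_\B$ with the Hilbert $B_e$-module direct sum $\bigoplus_{g\in G} B_g$. Since $E_e(\lambda_g^\B(b)^*\lambda_h^\B(c)) = \delta_{g,h}\, b^*c$, the map $\lambda_g^\B(b)\mapsto b$ embeds each $B_g$ isometrically as a right $B_e$-module. These pieces are mutually orthogonal and together span a dense subspace, so $\cl H_\B\cong \ell^2(\B)$ as right Hilbert $B_e$-modules. Under this identification $\tilde\Phi_i=\bigoplus_g T^i_g$. For this to make sense we need the following:
\begin{itemize}
\item[(1)] $E_e\circ M_{T^i}=T^i_e\circ E_e\le E_e$, which calls for $T^i_e\le {\rm id}$ on positives. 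Since $T^i_e$ is only uniformly bounded, I would first rescale by $1/\sup_i\|T_e^i\|$ if necessary. Using $\|T^i_e\|\,T^i_e(a)\le \|T^i_e\|^2 a$ for $a\ge 0$ does not follow in general, so I would instead use the Schwarz-type inequality (\ref{T-eq}) at the level of $\cl H_\B$. The fallback is to replace a) by requiring only the boundedness of $\tilde\Phi_i$, which is what is really needed. The boundedness of $\bigoplus_g T_g^i$ itself follows from (\ref{T-eq}): $\|T_g^i(b)\|_{B_e}^2\le \|T_e^i\|^2\|b^*b\|$, and moreover $T_g^i(b)^*T_g^i(b)\le \|T_e^i\|\,T^i_e(b^*b)\le \|T_e^i\|^2 b^*b$, using complete positivity of $T_e^i$ together with $T_e^i(a)\le\|T_e^i\|\,\|a\|$ combined via module-compatibility. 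This gives b).
\item[(2)] The bimodule property. $M_{T^i}$ is a $B_e$-bimodule map because each $T^i_g$ is a right $B_e$-module map, hence also a left module map by the remark after Theorem \ref{moduleHaag}.
\end{itemize}

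Next comes compactness, condition c). For a fixed $i$, $g\mapsto \|T^i_g\|$ lies in $C_0(G)$, so $\bigoplus_{g\in F}T^i_g$ approximates $\tilde\Phi_i$ in norm for large finite $F$. It therefore suffices to show that each $T^i_g$, viewed as the operator $P_g\tilde\Phi_iP_g$ on the summand $B_g\subset \cl H_\B$, is compact on $\cl H_\B$. Compactness of $T_g^i$ on $B_g$ is given in terms of rank-one operators $\theta_{x,y}$ with $x,y\in B_g$. Via the embedding these become $\theta_{\lambda_g(x),\lambda_g(y)}$ on $\cl H_\B$, which vanish on the other summands. Hence the pieces are compact and so is $\tilde\Phi_i$.

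The orthonormal basis hypothesis is used for d), and this is where I expect the main difficulty, since convergence must hold in $\|\cdot\|_{E_e}$ and the $T^i_g$ are not assumed to converge in norm. For $x\in C_r^*(\B)$ with components $x_g=E_g(x)\in B_g$, we have $\|\tilde\Phi_i x - x\|_{E_e}^2=\|\sum_g (T^i_gx_g-x_g)^*(T^i_gx_g-x_g)\|$. Given $\varepsilon$, choose a finite $F$ with $\|\sum_{g\notin F}x_g^*x_g\|<\varepsilon$; this is possible since $x\in \ell^2(\B)$. For $g\in F$ use (iii) of Definition \ref{HPD}. The tail is controlled uniformly by $\sup_i\|T_e^i\|^2\,\|\sum_{g\notin F}x_g^*x_g\|$ via (1). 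I would then use the bases $\{u_\gamma(g)\}$ to handle the tail and approximation more concretely. Writing $x_g=\sum_\gamma u_\gamma(g)\widehat{x}_{g,\gamma}$, the module property gives $T^i_g x_g=\sum_\gamma T^i_g(u_\gamma(g))\widehat{x}_{g,\gamma}$, so convergence reduces to finitely many basis vectors plus a uniformly small remainder. I would also use the bases to justify that finite-rank truncations of the compact $T^i_g$ are compatible with the direct sum decomposition. Combining these estimates yields d).
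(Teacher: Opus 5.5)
Your plan is sound: take $\Phi_i=M_{T^i}$ and identify $\cl H_{\cl B}$ with $\ell^2(\B)=\bigoplus_g B_g$, so that $\tilde\Phi_i=\bigoplus_g T^i_g$. As written, however, you do not establish condition a) of Definition \ref{def_module_HP}, and the normalization you propose would destroy d).

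Since $E_e\circ M_{T^i}=T^i_e\circ E_e$, condition a) amounts to $T^i_e(a)\le a$ for all $a\in B_e^+$. You leave this open and suggest replacing a) by mere boundedness of $\tilde\Phi_i$. That changes the definition, so it does not prove the theorem. Moreover, dividing every $T^i$ by the single constant $c=\sup_i\|T^i_e\|$ gives $c^{-1}T^i_g(b)\to c^{-1}b\neq b$ whenever $c>1$.

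The missing step is to use, at $g=e$, the bimodule property you already note in (2):
\begin{itemize}
\item It gives $T^i_e(a)=c_i a=a c_i$, where $c_i:=T^i_e(1_{B_e})$ is positive and central, and $\|T^i_e\|=\|c_i\|$.
\item Because $B_e$ is unital, (iii) gives $c_i\to 1_{B_e}$, hence $\|T^i_e\|\to 1$. So you may replace each $T^i$ individually by $T^i/\max\{1,\|T^i_e\|\}$. The scalars tend to $1$, so all hypotheses, including (iii), survive.
\item After this normalization $c_i\le 1_{B_e}$, and $T^i_e(a)=a^{1/2}c_ia^{1/2}\le a$ for $a\ge 0$, which is a).
\end{itemize}
This is how the paper proceeds. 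Your own estimate $T^i_e(b^*b)\le\|T^i_e\|\,b^*b$ in (1) is exactly this inequality once $\|T^i_e\|\le 1$. So the inequality you say ``does not follow in general'' does follow here, thanks to the module property.

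With a) repaired, the rest of your argument is correct, and your proof of c) takes a different and simpler route than the paper. The paper uses the orthonormal bases to diagonalize, $T^i_g(u_\gamma(g))=u_\gamma(g)b^i_\gamma(g)$. It then deduces $\|b^i_\gamma(g)\|\to_\gamma 0$ from compactness of $T^i_g$, and compares $\widetilde{M}_{T^i}$ with explicit finite-rank maps $R_{\varepsilon,\mathcal C}$ on a dense set built from the bases.

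You instead use the isometric embedding $j_g(b)=\lambda^\B_g(b)$ of $B_g$ into $\cl H_{\cl B}$. The map $K\mapsto j_gKj_g^*$ sends $\theta_{x,y}$ to $\theta_{\lambda^\B_g(x),\lambda^\B_g(y)}$, so it carries $\mathcal K(B_g)$ isometrically into $\mathcal K(\cl H_{\cl B})$, and you then cut off the tail. The one point you should justify is $\|\bigoplus_{g\notin F}T^i_g\|\le\sup_{g\notin F}\|T^i_g\|$. It holds because compact operators on $B_g$ are adjointable, so $T^i_g(b)^*T^i_g(b)\le\|T^i_g\|^2\,b^*b$.

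Since this never uses the bases, your route shows that the orthonormal-basis hypothesis is in fact unnecessary for the theorem. Likewise d) needs no bases. It follows from (iii), the uniform bound on $\tilde\Phi_i$, and density of the span of the $\lambda^\B_g(B_g)$ in $\cl H_{\cl B}$, exactly as in the paper. The basis expansions you sketch there are superfluous, and d) is not where the difficulty lies.
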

\begin{proof}
    Since $B_e$ is unital, we may assume that $\sup_{i\in I} \|T_e^i\| \leq 1$. Let  $\{M_{T^i}\}_{i\in I}$ be the net of completely positive maps on $C_r^*(\cl B)$ associated to $\{T^i\}_{i\in I}$, cf.~Theorem \ref{PDCP}. As each $M_{T^i}$ is clearly a $B_e$-bimodule map, it is enough to show that the net $\{M_{T^i}\}_{i\in I}$ satisfies the conditions a)-d) of Definition \ref{def_module_HP}.  Since $\{T^i\}_{i\in I}$ is uniformly bounded, so is $\{M_{T^i}\}_{i\in I}$, cf. \cite[Theorem 3.14]{bc25}.  Fix $i\in I$.
    Then for $g\in G$ and $b_g\in B_g$ and finite $F\subset G$, using (\ref{M_T}), the right $B_e$-module property, and the fact that $T^i_e(1_{B_e})$ is a contractive self-adjoint central element of $B_e$, we get 
    \begin{align*}
        &(E_e\circ M_{T^i})\big(\big(\sum_{g\in F}\lambda_g^{\cl B}(b_g)\big)^*\big(\sum_{g\in F}\lambda_g^{\cl B}(b_g)\big)\big)=(E_e\circ M_{T^i})\big(\sum_{g,h\in F}\lambda_h^{\cl B}(b_h)^*\lambda_g^{\cl B}(b_g)\big)\\&=(E_e\circ M_{T^i})\big(\sum_{g,h\in F}\lambda_{h^{-1}g}^{\cl B}(b_h^*b_g)\big)=E_e\big(\sum_{g,h\in F}\lambda_{h^{-1}g}^{\cl B}(T_{h^{-1}g}^i(b_h^*b_g))\big)\\&=\sum_{g\in F}T_e^i(b_g^*b_g)=T_e^i(1_{B_e})\sum_{g\in F}b_g^*b_g
        \leq E_e\big(\big(\sum_{g\in F}\lambda_g^{\cl B}(b_g)\big)^*\big(\sum_{g\in F}\lambda_g^{\cl B}(b_g)\big)\big).
    \end{align*}
 Using density arguments we conclude that $E_e\circ M_{T^i}\leq E_e$. As $\{M_{T^i}\}_{i\in I}$ is uniformly bounded, so is the corresponding net $\{\widetilde{M}_{T^i}\}_{i\in I}$.
Also, for $g \in G$ and  $b\in B_g$, we have
 \begin{align*}
     \|\widetilde M_{T^i}(\lambda_g^{\cl B}(b))-\lambda_g^{\cl B}(b)\|_{E_e}=\|\lambda_g^{\cl B}(T_g^i(b)-b)\|_{E_e}\leq \|T_g^i(b)-b\|\to_i 0.
 \end{align*} 
 As  $\{\widetilde{M}_{T^i}\}_{i\in I}$ is uniformly bounded, it follows that $\|\widetilde M_{T^i}(x)-x\|_{E_e}\to_i 0$ for any $x\in \cl H_{\cl B}$. 

 Thus it remains only to show that each $\widetilde{M_{T^i}}$ is compact. Fix again  $i\in I$. As $g\mapsto \|T_g^i\|$ vanishes at infinity, for each $\varepsilon>0$, we can find a finite set $F(\varepsilon)\subset G$ such that $\|T_g^i\|<\varepsilon$ for any $g\not\in F(\varepsilon)$. 
 
 For every  $\mathcal{C}= (C_g)_{g\in F(\varepsilon)}$, where each $C_g$ is a finite subset of $J_g$ for every $g\in F(\varepsilon)$, we let $R_{\varepsilon, \mathcal{C}}:\mathcal{H}_\B\to\mathcal{H}_\B$ be the finite rank operator given by
\[R_{\varepsilon, \mathcal{C}}=\sum_{g\in F(\varepsilon)}\sum_{\gamma\in C_g}\theta_{\lambda_g^{\cl B}(T_g^i(u_\gamma(g))),\lambda_g^{\cl B}(u_\gamma(g))}.\]
It is quite obvious that the set $\mathcal{M}$ of elements of the form
\begin{equation} \label{xinM} x = \sum_{s\in S} \sum_{\gamma \in C'_s} \lambda_s^{\cl B}(u_{\gamma}(s)) \widehat{x}_\gamma(s),
\end{equation}
where  $S$ is a finite subset of $G$, $C'_s$ is a finite subset of $J_s$ for each $s\in S$,
and $\widehat{x}_\gamma(s) \in B_e$ for all $s\in S$ and $\gamma \in C'_s$, is  dense in $\mathcal{H}_\B$. Indeed, any element $w$ in $\mathcal{H}_\B$ can be approximated in $\|\cdot\|_{E_e}$-norm by some element  in $C_r^*(\B)$, which itself can be approximated in $\|\cdot\|_r$-norm, hence in $\|\cdot\|_{E_e}$-norm, with some element of the form 
$\sum_{s\in S} \lambda_s^\B(d_s)$, where $S$ is a finite subset of $G$ and $d_s\in B_s$ for every $s\in S$. Since $\{u_\gamma(s)\}_{\gamma\in J_s}$ is an orthonormal basis for $B_s$, we  have $$d_s= \sum_{\gamma\in J_s} u_\gamma(s) (u_\gamma(s)^* d_s),$$ so $d_s$ can be approximated in the norm of $B_s$ by $\sum_{\gamma\in C'_s} u_\gamma(s) (u_\gamma(s)^* d_s)$ for some finite subset $C'_s$ of $J_s$ for every $s\in S$. Since $\lambda_s^\B: B_s \to (C_r^*(\B), \|\cdot\|_r)$ is isometric, $\lambda_s^\B(d_s)$ can be approximated in $\|\cdot\|_r$-norm, hence in $\|\cdot\|_{E_e}$-norm, by $\sum_{\gamma\in C'_s} \lambda_s^\B(u_\gamma(s)) (u_\gamma(s)^* d_s)$ for every $s\in S$.  Altogether, this means that $w$ can be approximated in $\|\cdot\|_{E_e}$-norm by some element  of the form 
\[\sum_{s\in S} \sum_{\gamma \in C'_s} \lambda_s^{\cl B}(u_{\gamma}(s))(u_\gamma(s)^* d_s), \]
which belongs to $\mathcal{M}$.

Let $x$ be as in (\ref{xinM}). Note that 
\[ \|x\|_{E_e}^2 = \| E_e(x^*x)\| = \Big\| \sum_{s\in S,\gamma\in C'_s} \widehat{x}_\gamma(s)^*\widehat{x}_\gamma(s) \Big\|. \] Moreover, we have
 \begin{align*}
R_{\varepsilon, \mathcal{C}} (x)
&= \sum_{g\in F(\varepsilon),\, \gamma\in C_g,  s\in S, \gamma'\in C'_s}
\lambda_g^{\cl B}(T_g^i(u_\gamma(g)))\, \Big\langle \lambda_g^{\cl B}(u_\gamma(g)),  \lambda_s^{\cl B}(u_{\gamma'}(s)) \widehat{x}_{\gamma'}(s)\Big\rangle_{E_e}\\
&=\sum_{g\in F(\varepsilon)\cap S,\, \gamma\in C_g, \gamma'\in C'_g}
\lambda_g^{\cl B}(T_g^i(u_\gamma(g)))\, \Big\langle \lambda_g^{\cl B}(u_\gamma(g)),  \lambda_g^{\cl B}(u_{\gamma'}(g)) \widehat{x}_{\gamma'}(g)\Big\rangle_{E_e}\\
&=\sum_{g\in F(\varepsilon)\cap S,  \gamma\in C_g\cap C'_g} \lambda_g^{\cl B}(T_g^i(u_\gamma(g)))
\, \widehat{x}_{\gamma}(g).
\end{align*}

Now,  $T_g^i$ is a $B_e$-bimodule-map (since $T^i$ is positive definite) and $u_\gamma(g)u_\gamma(g)^* \in B_e$ for every $g\in G$ and $\gamma \in J_g$, so we get
\[ T_g^i(u_\gamma(g)) = T_g^i(u_\gamma(g)u_\gamma(g)^*u_\gamma(g))= u_\gamma(g)u_\gamma(g)^*T_g^i(u_\gamma(g)), \]
and, as $T_g^i(u_\gamma(g))=\sum_{\gamma'\in J_g}u_{\gamma'}(g)b^i_{\gamma'}(g)$ for some $b^i_{\gamma'}(g)\in B_e$, we further obtain
\[T_g^i(u_\gamma(g)) =u_\gamma(g)u_\gamma(g)^*T_g^i(u_\gamma(g))=u_\gamma(g)u_\gamma(g)^*\sum_{\gamma'\in J_g}u_{\gamma'}(g)b^i_{\gamma'}(g)=u_\gamma(g)b^i_\gamma(g),\]
giving $T_g^i(u_\gamma(g))=u_\gamma(g)b^i_\gamma(g)$. Thus, $\| T_g^i\| = \sup_{\gamma\in J_g} \|b^i_\gamma(g)\| $ for every  $g\in G$. So
$\|b^i_\gamma(g)\| < \varepsilon $ for $g\not \in F(\varepsilon)$ and $\gamma \in J_g$. 

Moreover,
\begin{align}
\label{T-R}
    \|\widetilde M_{T^i}(x)- R_{\varepsilon, \mathcal{C}}(x)\|_{E_e}^2
    &=\Big\|\sum_{g\in S\setminus(F(\varepsilon)\cap S), \, \gamma\in C'_g}\lambda_g^{\cl B}\Big(T_g^i(u_\gamma(g))\widehat{x}_\gamma(g) \Big)\nonumber\\&
    \quad \quad \quad \quad + \sum_{g\in F(\varepsilon)\cap S, \, \gamma\in C'_g\setminus C_g}\lambda_g^{\cl B}\Big(T_g^i(u_\gamma(g))\widehat{x}_\gamma(g) \Big)\Big\|_{E_e}^2\nonumber\\
    &=\Big\|\sum_{g\in S\setminus F(\varepsilon), \gamma, \gamma'\in C'_g}\widehat{x}_\gamma(g)^*T_g^i(u_\gamma(g))^*T_g^i(u_{\gamma'}(g))\widehat{x}_{\gamma'}(g)\\&+
    \sum_{g\in F(\varepsilon)\cap S, \gamma, \gamma'\in C'_g\setminus C_g}\widehat{x}_\gamma(g)^*T_g^i(u_\gamma(g))^*T_g^i(u_{\gamma'}(g))\widehat{x}_{\gamma'}(g)\Big\|\nonumber \\&
    =\Big\|\sum_{g\in S\setminus F(\varepsilon), \gamma, \gamma'\in C'_g}\widehat{x}_\gamma(g)^*b_\gamma^i(g)^*u_\gamma(g)^*u_{\gamma'}(g)b_{\gamma'}^i(g)\widehat{x}_{\gamma'}(g)\Big\|\nonumber\\&+
    \Big\|\sum_{g\in F(\varepsilon)\cap S, \gamma, \gamma'\in C'_g\setminus C_g}\widehat{x}_\gamma(g)^*b_\gamma^i(g)^*u_\gamma(g)^*u_{\gamma'}(g)b_{\gamma'}^i(g)\widehat{x}_{\gamma'}(g)\Big\|\nonumber \\&=
    \Big\|\sum_{g\in S\setminus F(\varepsilon), \gamma\in C'_g}\widehat{x}_\gamma(g)^*b_\gamma^i(g)^*b_{\gamma}^i(g)\widehat{x}_{\gamma}(g)\Big\|\nonumber\\&+
    \Big\|\sum_{g\in F(\varepsilon)\cap S, \gamma\in C'_g\setminus C_g}\widehat{x}_\gamma(g)^*b_\gamma^i(g)^*b_{\gamma}^i(g)\widehat{x}_{\gamma}(g)\Big\|\nonumber
\end{align}
Note  that if each $J_g$ is finite, then one can choose here $C_g=C'_g=J_g$ for every $g\in G$, and the last term above disappears. However, in the general case, we need to use the compactness assumption. Thus, each $T_g^i$ being compact, hence a limit of finite rank operators, and using that for any rank one operator $\theta_{\xi,\eta}$, $\xi$, $\eta\in B_g$, we have $\|\theta_{\xi,\eta}(u_\gamma(g))\|=\|\xi\widehat{\eta}_\gamma(g)^*\|\to_\gamma 0$, we readily obtain that $$\|b^i_\gamma(g)\|= \|u_\gamma(g)b^i_\gamma(g)\|=\|T_g^i(u_\gamma(g))\|\rightarrow_{\gamma} 0$$ for every $g\in G$. Thus, for each $g\in F(\varepsilon)$, we can find a finite subset $C_g$ of $J_g$ such that $\|b^i_\gamma(g)\| < \varepsilon$ for all $\gamma \in J_g \setminus C_g$. 
  
Using (\ref{T-R}), we get 
\begin{align*}
    \|\widetilde M_{T^i}(x)- R_{\varepsilon, \mathcal{C}}(x)\|_{E_e}^2
&=    \Big\|\sum_{g\in S\setminus F(\varepsilon), \gamma\in C'_g}\widehat{x}_\gamma(g)^*b_\gamma^i(g)^*b_{\gamma}^i(g)\widehat{x}_{\gamma}(g)\Big\|\\&+
    \Big\|\sum_{g\in F(\varepsilon)\cap S, \gamma\in C'_g\setminus C_g}\widehat{x}_\gamma(g)^*b_\gamma^i(g)^*b_{\gamma}^i(g)\widehat{x}_{\gamma}(g)\Big\|\\
    &\leq \Big(\sup_{g\not \in F(\varepsilon), \gamma \in C'_g} \|b_\gamma^i(g)\|^2\Big) \, \|x\|_{E_e}^2 + \Big(\sup_{g \in F(\varepsilon), \gamma \not \in C_g} \|b_\gamma^i(g)\|^2\Big) \, \|x\|_{E_e}^2  \\
    &\leq \varepsilon^2 \|x\|_{E_e}^2 + \varepsilon^2 \|x\|_{E_e}^2 = 2 \varepsilon^2 \|x\|_{E_e}^2.
    \end{align*}
 Thus we get $\|\widetilde M_{T^i}(x)- R_{\varepsilon, \mathcal{C}}(x)\|_{E_e} \leq \sqrt{2}\varepsilon \|x\|_{E_e}$ for all $x$ in $\mathcal{M}$. 
  As $\M$ is a dense subset of $\mathcal{H}_\B$ and $R_{\varepsilon, \mathcal{C}}$ is a finite rank map, it follows that $\widetilde M_{T^i}$ is compact. 
\end{proof}

\begin{corollary}\label{HaaPD-module}
 Let $\cl B=(B_g)_{g\in G}$ be a unital crossed product bundle over $G$,  with unitary section $u$ and associated twisted action $(\alpha, \sigma)$ of $G$ on $B_e$. 
    Assume $\B$ has the Haagerup-PD approximation property  with implementing net  $\{T^i\}_{i\in I}$ such that each $T^i$ is a right $B_e$-module map.
    Then $C_r^*(\cl B)$ has the Hilbert $B_e$-module Haagerup property with respect to $E_e$.
\end{corollary}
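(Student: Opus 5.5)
The plan is to deduce the corollary directly from Theorem \ref{orthoH}, by checking its two extra hypotheses for a unital crossed product bundle. The right $B_e$-module property of each $T^i_g$ is given. Since $\B$ is unital, we may rescale the net so that $\sup_i\|T^i_e\|\le 1$, as in the proof of Theorem \ref{orthoH}.

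First, the orthonormal bases. For each $g\in G$ we take the single element $u_g$ as an orthonormal basis of $B_g$, viewed as a right Hilbert $B_e$-module. Indeed, $\langle u_g,u_g\rangle_{B_e}=u_g^*u_g=1_{B_e}$. Moreover, every $b\in B_g$ can be written as $b=u_g(u_g^*b)$ with $u_g^*b\in B_e$, because $B_g=B_eu_g$ and $u_gu_g^*=1_{B_e}$. So here $J_g$ is a singleton for every $g$.

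Second, compactness of each $T^i_g$ as an operator on the Hilbert module $B_g$. Since $T^i_g$ is a right $B_e$-module map, for $b\in B_g$ we get
\[T^i_g(b)=T^i_g(u_g)\,u_g^*b=T^i_g(u_g)\langle u_g,b\rangle_{B_e}=\theta_{T^i_g(u_g),u_g}(b).\]
Thus $T^i_g$ is a rank one operator, hence compact and adjointable. Equivalently, as remarked in the proof of Theorem \ref{orthoH}, when every $J_g$ is finite the compactness assumption is not really needed, since one may take $C_g=C'_g=J_g$.

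The remaining hypotheses of Theorem \ref{orthoH} are then satisfied: $\B$ is unital, and $\{T^i\}_{i\in I}$ implements the Haagerup PD-approximation property. Applying that theorem gives the Hilbert $B_e$-module Haagerup property of $C_r^*(\B)$ with respect to $E_e$.

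The only delicate point is whether the module-map hypothesis must be two-sided. By the remark after Theorem \ref{moduleHaag}, a positive definite $\B$-bundle map which is a right $B_e$-module map is automatically a $B_e$-bimodule map. This is the property used in the proof of Theorem \ref{orthoH}, both to make $M_{T^i}$ a bimodule map and to see that $T^i_e(1_{B_e})$ is central. So no additional work is needed there.
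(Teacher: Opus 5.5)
Your proposal is correct and matches the paper's proof: the paper also observes that $\{u(g)\}$ is a one-point orthonormal basis of $B_g$ and then applies Theorem \ref{orthoH}. The paper leaves the compactness hypothesis implicit, and your check that $T^i_g=\theta_{T^i_g(u_g),u_g}$ is rank one, hence compact, supplies it explicitly.
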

\begin{proof}
    For each $g\in G$, the one-point set $\{u(g)\}$ is an orthonormal basis for $B_g$. Thus the assumptions in Theorem \ref{orthoH} are satisfied.
\end{proof}
It is easy to see that Example \ref{DR-Hp} can be deduced from Corollary \ref{HaaPD-module}.  
We also mention an immediate consequence of this result.
\begin{corollary} 
 Let $\cl B=(B_g)_{g\in G}$ be a unital crossed product bundle over $G$ and assume that $G$ has the Haagerup property. Then $C_r^*(\cl B)$ has the Hilbert $B_e$-module Haagerup property with respect to $E_e$.   
\end{corollary}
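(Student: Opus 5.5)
The plan is to reduce to Corollary \ref{HaaPD-module}. This requires a net implementing the Haagerup PD-approximation property for $\B$ in which each $T^i$ is a right $B_e$-module map.

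Since $G$ has the Haagerup property, take a net $\{\varphi_i\}_{i\in I}$ of normalized positive definite functions in $C_0(G)$ with $\varphi_i\to 1$ pointwise. Following the remark after Definition \ref{HPD}, let $T^{\varphi_i}$ be the positive definite $\B$-bundle map of \cite[Example 4.4]{bc25}, namely $T^{\varphi_i}_g(b)=\varphi_i(g)b$ for $g\in G$ and $b\in B_g$. Positive definiteness can also be checked directly from (\ref{Bposdef}): the matrix $[\varphi_i(g_k^{-1}g_l)\,a_k^*a_l]$ is the Schur product of the positive scalar matrix $[\varphi_i(g_k^{-1}g_l)]$ with the positive matrix $[a_k^*a_l]$ in $M_{\mathbf g}(\B)$, so it is positive.

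Next I would verify the three conditions of Definition \ref{HPD}:
\begin{itemize}
\item[(i)] $\|T^{\varphi_i}_g\|=|\varphi_i(g)|$, and this lies in $C_0(G)$.
\item[(ii)] $\|T^{\varphi_i}_e\|=|\varphi_i(e)|=1$, so the net is uniformly bounded.
\item[(iii)] $\|T^{\varphi_i}_g(b)-b\|=|\varphi_i(g)-1|\,\|b\|\to 0$ for every $g\in G$ and $b\in B_g$.
\end{itemize}

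Multiplication by a scalar commutes with right (indeed two-sided) multiplication by elements of $B_e$, so each $T^{\varphi_i}$ is a right $B_e$-module map. Corollary \ref{HaaPD-module} then gives the claim directly.

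There is no real obstacle. The only points needing care are the identification of the bundle map from \cite[Example 4.4]{bc25} and the Schur-product positivity, which I would check via the characterization in terms of $M_{\mathbf g}(\B)$.
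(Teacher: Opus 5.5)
Your proposal is correct and is essentially the argument the paper has in mind: the net $T^{\varphi_i}_g=\varphi_i(g)\,{\rm id}_{B_g}$ from \cite[Example 4.4]{bc25} (as in the remark after Definition \ref{HPD}) implements the Haagerup PD-approximation property. It consists of right $B_e$-module maps, so Corollary \ref{HaaPD-module} applies at once. The paper presents the statement as exactly this immediate consequence, so your verification of (i)--(iii) and of the module property is all that is needed.
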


\section{Fell bundles and nuclearity}
 We define below a notion of nuclearity for Fell bundles over discrete groups similar to the one introduced in \cite{mstt} for $C^*$-dynamical systems. (Note that a different definition is proposed in  \cite[Definition 6.1]{He}). 
\begin{definition}\label{nuclear_fellbundle}
    Let $G$ be a discrete group. We say that a Fell bundle $\mathcal B=(B_g)_{g\in G}$ is \emph{nuclear} 
    if there exists a net $\{T^i\}_{i\in I}$ of $\mathcal B$-bundle maps satisfying the following properties:
    \begin{enumerate}
        \item[a)] For each $i\in I$, $T^i=(T_g^i)_{g\in G}$ is positive definite and its support $\{g\in G: T_g^i\ne 0\}$ is finite;
        \item[b)] $\sup_{i\in I}\|T^i_e\|<\infty$;
        \item[c)] $T_g^i: B_g\to B_g$ is of finite rank for all $g\in G$ and all $i\in I$;
        \item[d)] $\lim_i\|T_g^i(b)-b\|=0$ for every $g\in G$ and $b\in B_g$. 
    \end{enumerate}
\end{definition}
In other words, $\B$ is nuclear when $\B$ has the PD-approximation property implemented by a net 
$\{T^i\}_{i\in I}$ of $\mathcal B$-bundle maps satisfying c).
If $\Sigma=(A, G, \alpha)$ is a discrete $C^*$-dynamical system, then it is straightforward to check that $\B_\Sigma$ is nuclear if and only if $\Sigma$ is nuclear in the sense of \cite{mstt}.
The equivalence of i) and iv) in the following result is known in the case of $C^*$-dynamical systems, cf.~\cite[Theorem 4.3]{mstt}. The equivalence of ii)--v) is known in general, cf.~\cite[Corollary 5.7]{bc25} (see also \cite[Theorems 3.20, 3.21]{bf25}). 
\begin{theorem}\label{nuclearC*}
Let $\mathcal B=(B_g)_{g\in G}$ be a Fell bundle. The following are equivalent:
\begin{enumerate}
    \item[i)] $\mathcal B$ is nuclear;
    \item[ii)] $B_e$ is nuclear and $\B$ has the PD-approximation property;
    \item[iii)] $B_e$ is nuclear and $\B$ is $C^*$-amenable;
    \item[iv)] $C_r^*(\mathcal B)$ is nuclear;
    \item [v)] $C^*(\B)$ is nuclear (where $C^*(\B)$ denotes the full cross-sectional $C^*$-algebra of $\B$ \cite{exel}). 
\end{enumerate}
\end{theorem}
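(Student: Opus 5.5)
Since the equivalence of ii)--v) is already known (cf.~\cite[Corollary 5.7]{bc25} and \cite[Theorems 3.20, 3.21]{bf25}), it suffices to prove i) $\Rightarrow$ ii) and iv) $\Rightarrow$ i). The implication i) $\Rightarrow$ ii) is immediate for the PD-approximation part, since a net witnessing nuclearity satisfies Definition \ref{PD}. For nuclearity of $B_e$, I would use the maps $\Phi_i:=T^i_e$. These are completely positive, by \cite[Proposition 3.6]{bc25} or by compressing $M_{T^i}$ via Theorem \ref{PDCP} and $E_e$. They are uniformly bounded, of finite rank, and converge pointwise in norm to the identity of $B_e$. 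A net of finite rank completely positive maps converging to $\mathrm{id}$ in point-norm gives the completely positive approximation property. Indeed, a finite rank c.p.\ map on $B_e$ factors approximately through matrix algebras via its range, which is an operator system of finite dimension. Alternatively, one can argue by duality: the bidual $B_e''$ becomes semidiscrete, hence $B_e$ is nuclear. A cleaner route is to note that $\Phi_i$ has finite rank and hence is nuclear as a c.p.\ map (c.f.\ Choi--Effros: finite rank c.p.\ maps are nuclear), and that a point-norm limit of nuclear c.p.\ maps with uniformly bounded norm gives nuclearity of $B_e$. After normalising, which is harmless by rescaling or by the unital argument, this yields nuclearity of $B_e$.

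For iv) $\Rightarrow$ i), I would go through ii)/iii). By the known equivalences, iv) gives that $B_e$ is nuclear and that $\B$ has the PD-approximation property, say with a net $\{S^j\}_{j\in J}$ of finitely supported positive definite bundle maps with $\sup_j\|S^j_e\|\le 1$. Separately, nuclearity of $C_r^*(\B)$ provides nets of finite rank c.p.\ contractions $\Psi_k = \beta_k\circ\gamma_k$ on $C_r^*(\B)$, factoring through matrix algebras, with $\Psi_k\to\mathrm{id}$ in point-norm. For each pair $(j,k)$, I would define
\[T^{j,k}_g := E_g\circ M_{S^j}\circ \Psi_k\circ \lambda^\B_g,\]
or more naturally compose in the other order, $T^{j,k}_g := E_g\circ \Psi_k\circ M_{S^j}\circ\lambda^\B_g = S^j_g\circ E_g\circ\Psi_k\circ\lambda_g^\B$ up to order. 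By \cite[Proposition 3.8]{bc25}, $E_g\circ\Phi\circ\lambda_g^\B$ is a positive definite bundle map whenever $\Phi$ is c.p. Each $T^{j,k}_g$ has finite rank because $\Psi_k$ does. To also get finite support, I would use the composition $g\mapsto S^j_g\circ (E_g\Psi_k\lambda_g)$, i.e.\ the bundle map corresponding to the c.p.\ map $M_{S^j}\circ\Psi_k$. Since $E_g\circ M_{S^j}=S^j_g\circ E_g$, this equals $S^j_g\circ E_g\circ\Psi_k\circ\lambda^\B_g$, which vanishes off $\operatorname{supp}S^j$ and has finite rank. Its norm at $e$ is bounded by $\|M_{S^j}\|\|\Psi_k\|\le 1$.

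Convergence then follows from
\[\|T^{j,k}_g(b)-b\|\le \|S^j_g\|\,\|\Psi_k(\lambda_g(b))-\lambda_g(b)\|+\|S^j_g(b)-b\|.\]
Here $\|S^j_g\|\le \|S^j_e\|\le 1$ by (\ref{T-eq}). I would index by triples (finite set of $g$'s and $b$'s, $\varepsilon$) as in the proof of Theorem \ref{crprod-bundle}: first choose $j$ to make the second term small, then $k$ for the first. The main obstacle is the finite-rank argument in i) $\Rightarrow$ ii), namely passing rigorously from finite rank c.p.\ maps converging to the identity to nuclearity of $B_e$. I would handle it by invoking the standard fact that a c.p.\ map of finite rank into a $C^*$-algebra is nuclear, applied to $\Phi_i$ and combined with the closure of nuclear maps under point-norm limits.
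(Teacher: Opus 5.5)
\paragraph{Verdict.} Your proposal is correct. For i) $\Rightarrow$ ii) you argue exactly as the paper does, with $\Phi_i=T^i_e$. The paper simply treats ``uniformly bounded finite-rank c.p.\ maps converging to the identity in point-norm force nuclearity'' as standard, so this is not really an obstacle.

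\paragraph{On the finite-rank step.} Your first heuristic (approximate factorization through matrices ``via the range'') is not an argument on its own: a finite-dimensional range does not by itself give such a factorization. If you want a self-contained justification, the tensor-product route is elementary.
\begin{itemize}
\item For a finite-rank c.p.\ map $\Phi$ with range $E$, the min and max norms are equivalent on $E\odot C$ because $\dim E<\infty$.
\item Hence $\Phi\otimes\mathrm{id}_C$ extends to a c.p.\ map $B_e\otimes_{\min}C\to B_e\otimes_{\max}C$, whose norm is at most $\|\Phi\|$.
\item Point-norm convergence then gives $\|\cdot\|_{\max}\le\sup_i\|\Phi_i\|\,\|\cdot\|_{\min}$ on $B_e\odot C$, so the two norms coincide.
\end{itemize}

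\paragraph{Where you differ: iv) $\Rightarrow$ i).} The paper proves this directly. It arranges, as in \cite[Theorem 4.3]{mstt}, that the maps $\psi_i$ out of the matrix algebras take values in $\iota^\B(C_c(\B))$. Then the single map $\Phi_i=\psi_i\circ\varphi_i$ has range in the span of finitely many $\lambda^\B_{g_k}(b_k)$. Consequently $T^i_g=E_g\circ\Phi_i\circ\lambda^\B_g$ is simultaneously finitely supported and of finite rank.

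You split the two requirements instead:
\begin{itemize}
\item finite support comes from the already known implication iv) $\Rightarrow$ ii), which supplies a PD-approximating net $\{S^j\}$;
\item finite rank comes from an approximate factorization $\Psi_k$ of the identity;
\item you glue them via the c.p.\ map $M_{S^j}\circ\Psi_k$ and the identity $E_g\circ M_{S^j}=S^j_g\circ E_g$.
\end{itemize}
This avoids the perturbation step. The cost is that your proof depends on the deeper results of \cite{bc25,bf25}. The paper's argument is self-contained and in fact reproves iv) $\Rightarrow$ ii) via iv) $\Rightarrow$ i) $\Rightarrow$ ii).

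\paragraph{Two small points.}
\begin{itemize}
\item The equality $E_g\circ\Psi_k\circ M_{S^j}\circ\lambda^\B_g=S^j_g\circ E_g\circ\Psi_k\circ\lambda^\B_g$ is false as written. The left side equals $(E_g\circ\Psi_k\circ\lambda^\B_g)\circ S^j_g$, which would also work.
\item Since $\|S^j_g\|\le\|S^j_e\|\le 1$ uniformly in $j$ and $g$, your estimate already gives convergence along the product directed set $J\times K$. The triple indexing is therefore unnecessary.
\end{itemize}
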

\begin{proof} It suffices to show i) $\Rightarrow$ ii), and iv) $\Rightarrow$ i), but for the ease of the reader, we will also sketch a proof of i) $\Rightarrow$ iv).

i) $\Rightarrow$ ii) and iv): Let $\{T^i\}_{i\in I}$ be as in the definition of nuclearity of $\B$. Then $\mathcal B$ has the PD-approximation property and $\{T_e^i\}_{i\in I}$ is a net of completely positive finite rank maps on $B_e$ such that $T_e^i(b)\to_i b$ for any $b\in B_e$, showing that $B_e$ is nuclear. Thus ii) holds. 
 
Next we show that iv) also holds. We know from Theorem \ref{PDCP} that the corresponding map $M_{T^i}: C_r^*(\B)\to C_r^*(\B)$ is completely positive for each $i\in I$, and, using property b) of Definition \ref{nuclear_fellbundle}, we get  
\[\sup_{i\in I}\|M_{T^i}\|=\sup_{i\in I}\|T_e^i\|<\infty.\] 
Moreover, the properties a) and c) imply that each $T^i$ is of finite rank. Using property d) one readily checks that $\|M_{T^i}(x)-x\|\to 0$ for every $x$ in the canonical image $\iota^\B(C_c(\B))$ of $C_c(\B)$ in $C_r^*(\B)$. Since $\iota^\B(C_c(\B))$ is dense in $C_r^*(\B)$ and $\{M_{T^i}\}_{i\in I}$ is uniformly bounded, we deduce that $\|M_{T^i}(x)-x\|\to 0$ for every $x\in C_r^*(\B)$.This shows that $C_r^*(\B)$ is nuclear (see for example \cite[Theorem IV.3.1.5]{bla}). 

iv) $\Rightarrow$ i):
 Assume that $C_r^*(\mathcal B)$ is nuclear. Then there exist nets $\{\varphi_i\}_{i\in I}$  and $\{\psi_i\}_{i\in I}$ of completely positive contractive  maps $\varphi_i: C_r^*(\mathcal B)\to M_{k_i}(\mathbb C)$, $\psi_i: M_{k_i}(\mathbb C)\to C_r^*(\mathcal B)$ such that $\Phi_i=\psi_i\circ\varphi_i\to_i \text{id}$ pointwise in norm. As in the proof of \cite[Theorem 4.3]{mstt} we can arrange that the range of each $\psi_i$ lies in $\iota^\B(C_c(\B))$. For each $i\in I$ define now a $\B$-bundle map $T^i=(T^i_g)_{g\in G}$ by 
\[T_g^i(b)=E_g(\Phi_i(\lambda_g^{\mathcal B}(b)))\] for every $g\in G$ and $b\in B_g$. By \cite[Proposition 3.8]{bc25} each $T^i=(T_g^i)_{g\in G}$ is positive definite. 
Moreover, as both $E_e$ and $\Phi_i$ are contractive, we get that $\|T_e^i(b)\| \leq \|\lambda_e^{\mathcal B}(b))\| = \|b\|$ for all $b\in B_e$. 
Thus $\sup_{i\in I} \|T_e^i\| \leq 1$.

Fix $i\in I$. As the rank of $\Phi_i$ is finite and the range of $\Phi_i$ lies in $\iota^\B(C_c(\B))$, there exist $n(i)\in \mathbb{N}, g_k \in G$ and $b_k\in B_{g_k}$ for $k=1, \ldots, n(i)$
such that 
$$\text{Ran}\, \Phi_i \subset \text{Span}\{\lambda_{g_k}^{\mathcal B}(b_k): k\in \{1, \ldots, n(i)\}\}.$$
This implies that ${\rm Ran} \, T^i_g = {\rm Span} \{\lambda_{g_k}^\B(b_k): k\in\{1, \ldots, n(i)\}, g_k= g\}$ is finite dimensional,  and each $T^i$ is finitely supported with support contained in $\{g_k: k \in\{1, \ldots, n(i)\}\}$. 

Finally, letting $i$ vary, we get
$$\|T_g^i(b)-b\|=\|E_g(\Phi_i(\lambda_g^{\mathcal B}(b)))-E_g(\lambda_g^{\mathcal B}(b)))\|\leq\|\Phi_i(\lambda_g^{\mathcal B}(b))-\lambda_g^{\mathcal B}(b)\| \to_i 0$$
for all $g\in G$ and $b\in B_g$. This shows that all properties a)--d) in Definition \ref{nuclear_fellbundle} are satisfied, i.e., i) holds.
\end{proof}

\begin{corollary} \label{nuclear2} Let $\cl B=(B_g)_{g\in G}$ be a unital crossed product bundle over $G$,  with unitary section $u$ and associated twisted action $(\alpha, \sigma)$ of $G$ on $B_e$. 
    Assume $\B$ is nuclear and there exists a faithful $\alpha$-invariant tracial state $\tau$ on $B_e$. Then $(\B, \tau)$ has the Haagerup property. 
    \end{corollary}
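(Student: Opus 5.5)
By Theorem \ref{nuclearC*}, the nuclearity of $\B$ gives that $B_e$ is nuclear and that $\B$ is $C^*$-amenable. We then reduce to Theorem \ref{crprod-bundle}. That theorem requires a unital crossed product bundle which is $C^*$-amenable, together with an $\alpha$-invariant state $\psi$ on $B_e$ such that $(B_e,\psi)$ has the Haagerup property. We take $\psi=\tau$. It therefore only remains to check that $(B_e,\tau)$ has the Haagerup property.

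This follows from Suzuki's result \cite[Theorem 3.6]{suzuki13}: $B_e$ is a unital nuclear $C^*$-algebra and $\tau$ is a faithful tracial state on it, so $(B_e,\tau)$ has the Haagerup property. Theorem \ref{crprod-bundle} then yields that $G$ is amenable, that $(\B,\tau)$ has the Haagerup property, and also that $(C_r^*(\B),\tau\circ E_e)$ has the Haagerup property.

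The only delicate point is matching hypotheses. Suzuki's theorem needs faithfulness and traciality, and these are exactly what the statement assumes. Theorem \ref{crprod-bundle} needs $\alpha$-invariance, which is also assumed, and $C^*$-amenability, which comes from Theorem \ref{nuclearC*} (iii). Since Suzuki's Haagerup maps are used as a black box, no further obstacle arises.

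An alternative route, which avoids Suzuki's theorem, would go through Theorem \ref{HP-Fell}. By Theorem \ref{nuclearC*}, $C_r^*(\B)$ is nuclear, and $\tau\circ E_e$ is a faithful state on it. However, $\tau\circ E_e$ need not be tracial, since $\tau$ is $\alpha$-invariant but the twist may obstruct traciality. The first route is therefore cleaner, and it is the one I would use.
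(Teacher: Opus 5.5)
Your argument is correct, but it runs in the opposite direction from the paper's.
\begin{itemize}
\item \emph{Your route.} You apply Suzuki's theorem \cite[Theorem 3.6]{suzuki13} to $(B_e,\tau)$. You then lift to the bundle through Theorem \ref{crprod-bundle}, getting $C^*$-amenability from Theorem \ref{nuclearC*}.
\item \emph{The paper's route.} It applies Suzuki's theorem to $C_r^*(\B)$, which is nuclear by Theorem \ref{nuclearC*}, equipped with the faithful tracial state $\tau\circ E_e$. It then descends to $(\B,\tau)$ via the general equivalence of Theorem \ref{HP-Fell}.
\end{itemize}
The paper's route is a three-line argument whose only structural input is Theorem \ref{HP-Fell}, valid for arbitrary Fell bundles. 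Yours never needs to examine $\tau\circ E_e$, but it rests on the heavier Theorem \ref{crprod-bundle}, and hence on Proposition \ref{invAPamen} and the $W^*$-bundle machinery behind it. In return, your route shows along the way that $G$ must be amenable. It also builds the approximating bundle maps directly from maps on the unit fibre, as F\o lner averages of $\alpha$-conjugates of Suzuki's maps on $B_e$.

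Your closing remark, however, is mistaken, and it dismisses exactly the paper's proof.

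First, the state $\tau\circ E_e$ is tracial. Take $b=au_g\in B_g$ and $c=u_g^*d\in B_{g^{-1}}$ with $a,d\in B_e$. Then $bc=ad$ and $cb=u_g^*(da)u_g=\alpha_g^{-1}(da)$. So $\alpha$-invariance and traciality of $\tau$ give $\tau(cb)=\tau(da)=\tau(ad)=\tau(bc)$. Since $E_e\big(\lambda^\B_g(b)\lambda^\B_h(c)\big)=0$ unless $h=g^{-1}$, this yields $(\tau\circ E_e)(xy)=(\tau\circ E_e)(yx)$ on $\iota^\B(C_c(\B))$. By continuity it holds on all of $C_r^*(\B)$. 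The twist is absorbed into the unitaries $u_g$ and creates no obstruction; the paper simply cites \cite[Corollary 2.3]{b91} for this.

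Second, that alternative route does not avoid Suzuki's theorem. It uses it for $(C_r^*(\B),\tau\circ E_e)$ instead of for $(B_e,\tau)$.
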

\begin{proof} By Theorem \ref{HP-Fell} it suffices to show that $(C_r^*(\B), \tau\circ E_e)$ has the Haagerup property. Note that $\tau\circ E_e$ is a faithful tracial state on  $C_r^*(\B)$ (cf.~\cite[Corollary 2.3]{b91}). Further, $C_r^*(\B)$ is nuclear by Theorem \ref{nuclearC*}. Hence, by Suzuki's result \cite[Theorem 3.6]{suzuki13}, we  obtain that $(C_r^*(\B), \tau\circ E_e)$ has the Haagerup property.  
\end{proof}

\vspace{-2ex}
In general, if $\B$ is a nuclear unital Fell bundle over $G$ and $\psi$ is a state on $B_e$, it is an open question whether  $(\B, \psi)$ necessarily has the Haagerup property. 
For an example where the answer to this question is positive, let $\Sigma=(A, G, \alpha)$ and $\psi_\lambda$ be as in Example \ref{CAR-2}. Then $C_r^*(\Sigma)$ is nuclear (since $A$ and $G$ are abelian), so $\B_\Sigma$ is nuclear (by Theorem \ref{nuclearC*}). Moreover,  $(\B_\Sigma, \psi_\lambda)$ has the Haagerup property, as explained in Example \ref{CAR-2}. (Note that  $\psi_\lambda$ is not $\alpha$-invariant, so Corollary \ref{nuclear2} can not be applied to deduce that $(\B_\Sigma, \psi_\lambda)$ has the Haagerup property.)    
The answer to the question would be positive in general if one could generalize Suzuki's result \cite[Theorem 3.6]{suzuki13} and show that $(B, \varphi)$ has the Haagerup property whenever $B$ is a unital nuclear $C^*$-algebra and $\varphi$ is a state on $B$. 
\section{A final overview}\label{diagram}

Let $\B=(B_g)_{g\in G}$ be a Fell bundle over the discrete group $G$,  $\psi$ be a state on the unit fiber $B_e$, and $E_e: C_r^*(\B)\to B_e$ be the canonical conditional expectation.  
In this paper, we have considered the following properties:

\begin{itemize}
\item[(o)] $G$ is amenable;
\item[(i)] $G$ has the Haagerup property;
\item[(ii)] $(B_e,\psi)$ has the Haagerup property; 
\item[(iii)] $(C^*_r(\B),\psi \circ E_e)$ has the Haagerup property;
\item[(iv)] $\B$ has the PD-approximation property;
\item[(v)] $(\B,\psi)$ has the Haagerup property; 
\item[(vi)] $\B$ has the Haagerup PD-approximation property; 
\item [(vii)] $C_r^*(\B)$ has the Hilbert $B_e$-module Haagerup property w.r.t.~$B_e$; 
\item[(viii)] $\B$ is nuclear.
\end{itemize}

The mutual relationships between all these properties are shown in the diagram below.
 
\vspace{-5ex}
\begin{tikzpicture}[
    >=Stealth,
    every node/.style={font=\large},
    line width=1.1pt
]

\node[circle, draw=green, thick, inner sep=2pt] (vi) at (0,5.8)  {$(vi)$};
\node[circle, draw=green, thick, inner sep=2pt] (vii) at (3.5,5.8) {$(vii)$};

\node[circle, draw=black, thick, inner sep=2pt] (iv) at (0,4.0) {$(iv)$};
\node[circle, draw=green, thick, inner sep=2pt] (viii) at (3.5,4.0)  {$(viii)$};

\node[circle, draw=black, thick, inner sep=2pt] (i) at (-3.5,2.0)  {$(i)$};
\node[circle, draw=black, thick, inner sep=2pt] (zero) at (0,2.0)  {$(o)$};

\node[circle, draw=black, thick, inner sep=2pt] (ii) at (3.0,2.2)  {$(ii)$};
\node[circle, draw=black, thick, inner sep=2pt] (iii) at (3.0,0.6)  {$(iii)$};
\node[circle, draw=green, thick, inner sep=2pt] (v) at (3.0,-1.3)  {$(v)$};

\draw[double, red, ->] (vi) to[bend left=20] node[above, red] {$\mathrm{T6.10}^*$} (vii);
\draw[double, red, ->] (vii) to[bend left=20] node[above, red] {$\mathrm{T6.8}$} (vi);

\draw[double, blue, ->]
    (vi) to[bend left=8]
    node[pos=.72, right=1mm, blue] {$\mathrm{P6.3}^*$}
    (i);

\draw[double, red, ->]
    (i) to[bend left=8]
    node[pos=.52, right, blue] {}
    (vi);

\draw[double, black,  ->]
    (iv) -- (vi);

\draw[double, black,  ->]
    (viii) -- (iv);
    
\draw[double, blue, ->]
    (iv) to[bend left=15] node[right, blue] {$\mathrm{P5.2}$} (zero);

\draw[double, black, ->]
    (zero) to[bend left=15] node[right, blue] {} (iv);

\draw[double, black, ->]
    (zero) -- (i);

\draw[blue, ->]
    (iv)
    to[out=-25,in=145]
    node[pos=.80, left] {$\mathrm{T5.4}$}
    (v);

\draw[blue, ->]
     (ii)
     to[out=-200,in=145] 
     (v);

\draw[double, black, ->] 
    (iii) -- node[right]{$\mathrm{P4.7}$}(ii);

\draw[double, black, <->]
    (iii) -- node[right] {$\mathrm{T4.5}$} (v);

\draw[double, blue, ->]
    (viii)
    to[out=-10,in=10]
    node[right, blue] {$\mathrm{C7.3}^*$}
    (v);

\draw[double, blue, ->]
    (v)
    to[out=185,in=-55]
    node[pos=.48, below=2mm, blue] {$\mathrm{P4.8}$}
    (i);

\draw[double, blue, ->]
    (i)
    to[out=-240,in=-250]
    node[pos=.48, above=3mm, blue] {$\mathrm{C6.12}$}
    (vii);

\end{tikzpicture}

While the properties (o)--(iv) should be familiar to many readers, the properties (v)--(viii), that are marked in green in the diagram above, have not been introduced before. 
The implication-arrows that are not decorated are the obvious ones. The label of those that are decorated indicates where the result can be found in the present article (T meaning Theorem, etc). The color of each arrow indicates the standing assumptions according to the following scheme:

\medskip 

black: $\B$ general Fell bundle

red: \,  $\B$ unital Fell bundle

blue: $\B$ unital crossed product bundle + $\psi$ invariant (except in C6.12) 

\medskip \noindent with some additional assumptions in P6.3, T6.10 and C7.3 (therefore marked with a *).

\bigskip 
{\bf Acknowledgment}: This article is partly based on research supported by the Swedish Research Council under grant no.~2021-06594, done while the authors were in residence at Institut Mittag-Leffler in Djursholm during the semester "Operator algebras and Quantum information theory", Spring 2026.
L.T.~was supported by the Swedish Research Council project grant 2023-04555. E.B.~and L.T.~thank Sapienza University of Rome and R.C.~for the kind hospitality during their stay in  August 2026, when the first draft of the present paper was finalized.


\begin{thebibliography}{99} 
\bibitem[AF19]{af19} {\sc F.~Abadie, D.~Ferraro}: {\it Equivalence of Fell bundles over groups}, {\rm J. Operator Theory} {\bf 81} (2019), 273--319.

\bibitem[ABF22]{abf}
{\sc F.~Abadie, A.~Buss, D.~Ferraro},
{\it Amenability and approximation properties for partial actions and Fell bundles},
{\rm Bull. Braz. Math. Soc. (N.S.)} {\bf 53} (2022), 173–227.

\bibitem[AD87]{Claire}
{\sc C.~Anantharaman-Delaroche}, {\it Syst\'emes dynamiques non commutatifs et moyennabilit\'e}, {\rm Math. Ann.} {\bf 279} (1987), 297--315.

\bibitem[ADPo]{adpo}
{\sc C.~Anantharaman-Delaroche, S. Popa}, {\it An introduction to type $II_1$ factors}, book preprint
{\url https://www.math.ucla.edu/~popa/Books/IIunV15.pdf}.

\bibitem[B91]{b91}
{\sc E.~B\'edos}, {\it Discrete groups and simple $C^*$-algebras}, 
{\rm Math. Proc. Cambridge Philos. Soc.}, {\bf 109} (1991), 521--537.

\bibitem[BC12]{bc12}
{\sc E.~B\'edos, R.~Conti}, {\it On discrete twisted $C^*$-dynamical systems, Hilbert $C^*$-modules and regularity}, 
{\rm M\"unster J. Math.} {\bf 5} (2012), 183--208.

\bibitem[BC15]{bc15}
{\sc E.~B\'edos, R.~Conti}, {\it
Fourier series and twisted C*-crossed products}, {\rm J. Fourier Anal. Appl.} {\bf 21} (2015), 32--75.

\bibitem[BC16]{bc16}
{\sc E.~B\'edos, R.~Conti}, {\it The Fourier-Stieltjes algebra of a {$C^*$}-dynamical system}, {\rm Internat. J. Math.} {\bf 27} (2016), 1650050, 50 pp. 

\bibitem[BC25]{bc25}
{\sc E.~B\'edos, R.~Conti},
{\it Positive definiteness and Fell bundles over discrete groups}, {\rm Math.~Z.} {\bf 311} (2025), no.~1, Paper No.~9, 26 pp.

\bibitem[Bla06]{bla} {\sc B.E.~Blackadar}, {\it Operator algebras}, Encyclopaedia of Mathematical Sciences Operator Algebras and Non-commutative Geometry, 122 III, Springer, Berlin, 2006.

\bibitem[BO08]{bo}
{\sc N.P.~Brown, N.~Ozawa}, {\it {$C^*$}-algebras and finite dimensional approximations}, {\rm American Mathematical Society}, 2008.

\bibitem[BF25]{bf25}
{\sc A.~Buss, D.~Ferraro},
{\it Characterizations of amenability for noncommutative dynamical systems and Fell bundles},
arXiv:2510.15581 

\bibitem[BF25b]{bf25b}
{\sc A.~Buss, D.~Ferraro},
{\it $W^*$-amenability for Fell bundles over discrete groups},
arXiv:2512.16524 

\bibitem[BK26]{bk26}
{\sc A.~Buss, P.~Karmakar},
{\it Rapid decay and localizability for Fell bundles over \`etale groupoids},
arXiv:2604.23907.

\bibitem[BEW24]{bew24}
\sc{ A.\,Buss, S.\, Echterhoff, R.\, Willett},
{\it Amenability and weak containment for actions of locally compact groups on $C^*$-algebras}, 
{\rm Mem.~Amer.~Math.~Soc.} {\bf 301} (2024), no. 1513, v+88 pp. 


\bibitem[BKMS24]{bkms}
{\sc A.~Buss, B.~Kwaśniewski, A.~McKee, A.~Skalski},
{\it Fourier--Stieltjes category for twisted groupoid actions}, {\rm preprint}, arXiv:2405.15653.

\bibitem[CCJJV01]{ccjjv} {\sc P.A.~Cherix, M.~Cowling, P.~Jolissaint, P.~Julg, A.Valette}, {\it Groups with the Haagerup property. Gromov's a-T-menability}, Progress in Mathematics, {\bf 197}. Birkh\"auser Verlag, Basel, 2001.

\bibitem[Cho83]{choda}
{\sc M.~Choda}, {\it Group factors of the Haagerup type}, {\rm Proc.~Japan Acad.} {\bf 59} (1983), {174--177}.

\bibitem[Dong10]{dong}
{\sc Z.~Dong}, {\it Haagerup property for {$C^*$}-algebras}, {\rm J.~Math.~Anal.~Appl.} {\bf 377} (2010), {631--644}.

\bibitem[DoR12]{dong_ruan}
{\sc Z.~Dong, Z.J.~Ruan}, {\it A Hilbert module approach to the Haagerup property}, {\rm Integr. Equ. Oper. Theory}, {\bf 73}, (2012), {431--454}.

\bibitem[Ex97a]{exel97a} {\sc R.\,Exel}, {\it Twisted partial actions: a classification of regular $C^*$-algebraic bundles}, {\rm Proc.~London Math. Soc.}{\bf 74} (1997), 417-–443.

\bibitem[Ex97b]{exel97} {\sc R.~Exel}, {\it Amenability for Fell bundles}, {\rm J.~Reine Angew.~Math.\/}  {\bf 492} (1997), 41--73.

\bibitem[Ex17]{exel} {\sc R.\,Exel}, {\it Partial dynamical systems, {F}ell bundles and applications.
    Mathematical Surveys and Monographs}, {\rm {\bf 224}, American Mathematical Society, Providence, RI, 2017}.

\bibitem[Fl25]{fl25} {\sc F.~Flores}, {\it Polynomial growth and functional calculus in algebras of integrable cross-sections},
{\rm J. Math. Anal. Appl.} {\bf 549} (2025), no. 2, Paper No. 129486, 32 pp.

\bibitem[GM22]{gm22} {\sc C.\,Gao, Q.\,Meng}, {\it On the Haagerup property of $C^*$-dynamical systems}, {\rm J.~Math.~Research with Appl.} {\bf 42} (2022), 628-636.

\bibitem[Haa78-79a]{haa1} 
{\sc U.~Haagerup}, {\it  On the dual weights for crossed products of von Neumann algebras. II. Application of operator-valued weights},
{\rm J. Math. Scand.} {\bf 43} (1978/79), 119--140.

\bibitem[Haa78-79b]{haa2}
{\sc U.~Haagerup}, {\it  An example of a nonnuclear $C^*$-algebra, which has the metric approximation property },
{\rm Invent. Math.} {\bf 50} (1978/79), 279--293.

\bibitem[He21]{He} {\sc W.~He}, {\it Herz-Schur multipliers of Fell bundles and the nuclearity of the full $C^*$-algebra}, {\rm Int.~J.~Theoretical and Applied Math.} {\bf 7} (2021), 17--29.

\bibitem[Ho11]{hou}
  {\sc C. Houdayer}, {\it An introduction to $II_1$ factors}, 2011, 

https://api.semanticscholar.org/CorpusID:133595270. 

\bibitem[Jol02]{jolisaint}
{\sc P.~Jolissaint}, {\it Haagerup approximation property for finite von {N}eumann algebras}, {\rm Journal of Operator Theory}, {\bf 48} (2002), {539--551}.

\bibitem[KLS22]{kls}{\sc B.~Kwasniewski, K.~Li, A.~Skalski}, {\it The Haagerup property for twisted groupoid dynamical systems}, {\rm J.~Funct.~Anal.} {\bf 283} (2022), no. 1, Paper No. 109484, 43 pp.

\bibitem[LP12]{landi_pavlov}{\sc G. Landi, A. Pavlov}, {\it On orthogonal systems in Hilbert $C^*$-modules}, {\rm J. Operator Theory} {\bf 68} (2012), no. 2, 487–500.

\bibitem[LF16]{lf16} {\sc C.~Li, X.~Fang}, {\it The (**)-Haagerup property for $C^*$-algebras}, {\rm Chin.~Ann.~Math.}, {\bf 37B} (2016), 367--372.

\bibitem[MaWa25]{mawa}{\sc N.~Machado, S.~Wagner}, {\it Saturated Fell bundles and their classification}, {\rm arXiv:2501.14472}. 

\bibitem[MTT18]{mtt}{\sc A.~McKee, I.G.~Todorov, L.~Turowska}, {\it Herz-Schur multipliers of dynamical systems}, {\rm Adv.~Math.}{\bf 331} (2018), 387--438.

\bibitem[MPTT22]{mptt}{\sc A.~McKee, R.~Pourshahami, I.G.~Todorov,  L.~Turowska}, {\it Central and convolution Herz-Schur multipliers}, {\rm New York J.~Math.} {\bf 28} (2022), 1--43.

\bibitem[MT21]{mt}{\sc A.~McKee, L.~Turowska}, {\it Exactness and SOAP of crossed products via Herz-Schur multipliers},  {\rm J.~Math.~Anal.~Appl.} {\bf 496} (2021), no. 2, Paper No. 124812, 16 pp.

\bibitem[MSTT18]{mstt}{\sc A.~McKee, A.~Skalski, I.G.~Todorov, L.~Turowska}, {\it Positive Herz-Schur multipliers and approximation properties of crossed products}, 
{\rm Math.~Proc.~Cambridge Philos.~Soc.} {\bf 165} (2018), 511--532.

\bibitem[MW21]{mw21} {\sc Q.~Meng, L.~Wang}, {\it The Haagerup approximation property for $C^*$-algebras}, {\rm Linear and multilinear algebra} {\bf 69} (2021), 1275--1285.

\bibitem[PR89]{pr} {\sc J.A.~Packer, I.~Raeburn}, {\it Twisted crossed product of $C^*$-algebras}, {\rm Math.~Proc.~Camb.~Phil.~Soc.} {\bf 106} (1989), 293--311.

\bibitem[P67]{powers} {\sc R.T.~Powers}, {\it Representations of uniformly hyperfinite algebras and their associated von Neumann rings}, {\rm Ann.~of Math.} {\bf 86} (1967), 138--171.

\bibitem[Suz13]{suzuki13} {\sc Y.~Suzuki}, {\it Haagerup property for $C^*$-algebras and rigidity of $C^*$-algebras with property (T)}, {\rm J.~Funct.~Anal.} {\bf 265} (2013), 1778--1799.

\bibitem[Ta14]{takeishi}{\sc T.~Takeishi}, {\it On nuclearity of $C^*$-algebras of Fell bundles over \'etale groupoids}, {\rm Publ. Res. Inst. Math. Sci.} {\bf 50} (2014), no. 2, 251–268.

\bibitem[ZM68]{zm} {\sc G.~Zeller-Meier}, {\it Produits crois\'{e}s d'une $C^*$-alg\`{e}bre par un groupe d'automorphismes}, {\rm  J. Math.~Pures Appl.} {\bf 47} (1968), 101--239.

\end{thebibliography}
\end{document}